\newcommand{\C}{\mathbb{C}}
\newcommand{\R}{\mathbb{R}}
\newcommand{\N}{\mathbb{N}}
\newcommand{\Z}{\mathbb{Z}}
\newcommand{\cF}{\mathcal{F}}
\newcommand{\cC}{\mathcal{C}}
\newcommand{\cH}{\mathcal{H}}
\newcommand{\G}{\Gamma}
\newcommand{\g}{\gamma}
\newcommand{\tv}{\rightarrow}
\newtheorem{theorem}{Theorem}[section]
\newtheorem{lemme}[theorem]{Lemma}
\newtheorem{prop}[theorem]{Proposition}
\newtheorem{corollaire}[theorem]{Corollary}
\newtheorem{definition}[theorem]{Definition}
\newtheorem{fait}[theorem]{Fact}
\DeclareMathOperator{\Id}{Id}
\newcommand{\bP}{\mathbb{P}}
\newcommand{\fa}{\mathfrak{a}}
\newcommand{\fg}{\mathfrak{g}}
\newcommand{\fn}{\mathfrak{n}}
\newcommand{\cU}{\mathcal{U}}
\newcommand{\cE}{\mathcal{E}}
\newcommand{\cB}{\mathcal{B}}
\newcommand{\cV}{\mathcal{V}}
\newcommand{\cW}{\mathcal{W}}
\newcommand{\ccF}{\mathcal{F}}
\title[Topological mixing]{Topological mixing of the Weyl Chamber flow}
\author{Nguyen-Thi Dang, Olivier Glorieux }
\begin{document}
\maketitle

\begin{abstract}
In this paper, we study topological properties of the right action by translation of the Weyl Chamber flow on the space of Weyl chambers.
We obtain a necessary and sufficient condition for topological mixing.
\end{abstract}

(\footnote{MSC Classification 54H20, 37B99, 53C30, 58E40 Secondary 53C35})

\section{Introduction}
Let $G$ be  semisimple real, connected, Lie group of non compact type. 
Let $K$ be a maximal compact subgroup of $G$ and $A$ a maximal torus of $G$ for which there is a Cartan decomposition. 
Let $M$ be the centralizer of $A$ in $K$.
We establish mixing properties for right action by translation of one parameter subgroups of $A$ on quotients $\G \backslash G /M$ where $\G$ is a discrete, Zariski dense subgroup of $G$.
\vspace{5pt}

The particular case when $G$ is of real rank one is well known.
In this case, the symmetric space $X=G/K$ is a complete, connected, simply connected Riemannian manifold of negative curvature. 
The right action by translation of $A$ on $G/M$ coincides with the geodesic flow on $T^1 X$.
Dal'bo \cite{dalbo2000feuilletage} proved that it is mixing (on its nonwandering set) if and only if the length spectrum is non arithmetic.
The latter holds when $\G$ is a Zariski dense subgroup, 
see Benoist \cite{benoist2000proprietes},  Kim \cite{kim2006length}.
\vspace{5pt}

We are interested in cases where $G$ is of higher real rank $k\geq 2$.
When $\G \backslash G /M$ is of finite volume, i.e. when $\G$ is a lattice, it follows from Howe-Moore's Theorem that the action of any noncompact subgroup of $G$ is mixing.

We study the general situation of any discrete, Zariski dense subgroup, which of course includes the case of lattices. 
\vspace{5pt}

If $\G \backslash G/M$ has infinite volume, the  known results are not as general. 

In the particular case of so-called Ping-Pong subgroups of $\mathrm{PSL}(k+1,\R)$,  Thirion \cite{thirion2007sous}, \cite{thirion2009proprietes} proved mixing with respect to a natural measure on $\Omega(X)$ for a one parameter flow associated to the "maximal growth vector" introduced by Quint in \cite{quint2002divergence}.
Sambarino \cite{sambarino2015orbital} did the same for hyperconvex representations.

Finally, Conze-Guivarc'h in \cite{conze-guivarch-densite2000} proved for any Zariski dense subgroup $\G$, the topological transitivity (i.e. existence of dense orbits) of the right $A-$action on a natural closed $AM-$invariant set $\Omega(X) \subset \G \backslash G /M$.

\vspace{5pt}

Let $\fa\simeq \R^k$ be the Cartan Lie subalgebra over $A$ and $\fa^{++}$ the choice of a positive Weyl chamber.
For any $\theta\in \fa^{++}$, the Weyl chamber flow $(\phi_t^{\theta})$ corresponds to the right action by translation of $\exp(t\theta)$. 
Benoist \cite{benoist1997proprietes} introduced a convex limit cone $\cC (\G)\subset \fa$ and proved that for Zariski dense semigroups, the limit cone is of non empty interior.
We prove topological mixing for any direction of the interior of $\cC (\G)$.

\begin{theorem}\label{th-main_geom}
Let $G$ be a semisimple, connected, real linear Lie group, of non-compact type.
Let $\G$ be a Zariski dense, discrete subgroup of $G$.
Let $\theta \in \fa^{++}$.

Then the dynamical system ($\Omega(X), \phi_t^\theta)$ is topologically mixing if and only if $\theta$ is in the interior of the limit cone $\cC (\G)$. 
\end{theorem}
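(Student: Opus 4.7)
The plan is to prove the two implications separately. The easy direction is an obstruction: since all recurrence of the $A$-action on $\Omega(X)$ is constrained to directions of $\cC(\G)$, flowing in a direction not in its interior cannot produce mixing. The hard direction constructs mixing when $\theta$ is interior and is the technical heart.

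For the easy direction, assume $\theta \notin \text{int}(\cC(\G))$ and pick $\varphi \in \fa^*$ supporting $\cC(\G)$ at $\theta$: $\varphi \ge 0$ on $\cC(\G)$ and $\varphi(\theta) \le 0$. For small open $U, V \subset \Omega(X)$, the condition $\phi_{t\theta}(U) \cap V \ne \emptyset$ is realised by some $\g \in \G$ whose Cartan projection $\mu(\g)$ is within bounded distance of $t\theta$, the bound depending only on the size of $U$ and $V$. Applying $\varphi$ yields $\varphi(\mu(\g)) = t \varphi(\theta) + O(1) \le O(1)$. On the other hand, since all $\mu(\g)$ lie close to $\cC(\G)$, we have $\varphi(\mu(\g)) \ge -O(1)$. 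The projection $\mu(\g)$ is thus trapped in a bounded strip; by discreteness of $\G$, the set of admissible $\g$'s is finite, so the set of $t$ realising a non-empty intersection is discrete (and bounded when $\varphi(\theta) < 0$). This contradicts mixing.

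For the converse, fix $\theta \in \text{int}(\cC(\G))$ and non-empty open $U, V \subset \Omega(X)$. The argument rests on four ingredients: (i) Conze--Guivarc'h's topological transitivity of the $A$-action, providing an initial $a_0 \in \fa$ with $\phi_{a_0}(U) \cap V \ne \emptyset$; (ii) density in $\Omega(X)$ of $\G$-periodic orbits of the $A$-action, parametrised by conjugacy classes of loxodromic $\g \in \G$ with translation vector $\lambda(\g) \in \fa^{++}$; (iii) Benoist's theorem that $\cC(\G)$ is the closed convex cone generated by the $\lambda(\g)$'s and that the additive subgroup they generate is dense in $\fa$; and (iv) a closing/shadowing lemma permitting concatenations of arcs of periodic orbits to be realised by a single $A$-orbit. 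Select $\g_1, \ldots, \g_r \in \G$ whose Jordan projections $\lambda(\g_i)$ place $\theta$ in the interior of their positive hull and generate a dense subgroup of $\fa$. Shadowing then shows that the set of ``connecting vectors'' from $U$ to $V$ contains a $\delta$-neighbourhood of $a_0 + \sum_i n_i \lambda(\g_i)$ for every $n_i \in \N$. Since $\theta$ is interior to the cone spanned by the $\lambda(\g_i)$, the semigroup $\sum_i \N \lambda(\g_i)$ is asymptotically dense in any conical neighbourhood of $\R_{> 0}\theta$, so every $t\theta$ with $t$ large enough is $\delta$-approximated by some $a_0 + \sum_i n_i \lambda(\g_i)$, yielding $\phi_{t\theta}(U) \cap V \ne \emptyset$.

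The main obstacle is step (iii) combined with the continuous nature of mixing: one needs not just that combinations $\sum n_i \lambda(\g_i)$ are dense in $\fa$, but that they asymptotically approximate the entire ray $\R_{\ge 0}\theta$ \emph{uniformly}, with error smaller than the injectivity radius of $U$ and $V$. Raw density gives only pointwise approximation; the interior hypothesis on $\theta$ is what permits the upgrade, since it provides a full open cone of admissible perturbation directions around $\theta$ in which the semigroup of Jordan projections can be made equidistributed at large scales. A secondary technical point is controlling the $M$-component under shadowing: working on the quotient $\G \backslash G / M$ is precisely what neutralises this factor and reduces the analysis to $\fa$-translates.
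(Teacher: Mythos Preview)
Your sketch of the ``if'' direction is essentially the paper's strategy. What you call shadowing is implemented via Benoist's estimate on the Jordan projection of a product of $(r,\varepsilon)$-loxodromic elements (Proposition~\ref{prop - lemme 3.6 de Benoist ii }), and your asymptotic density claim for $\sum_i\N\lambda(\g_i)$ is Lemma~\ref{lemme_densite_cone}. The point you pass over is genuinely delicate: the $\g_i$ must form a strong Schottky family for the product estimate to apply at all, and one has to arrange simultaneously (a)~$\theta$ interior to the cone on the $\lambda(\g_i)$, (b)~the $\lambda(\g_i)$ generating an $\eta$-dense subgroup, and (c)~the Schottky parameters small enough that the error $lC_{r,\varepsilon}$ is below $\eta$, with the number $l$ of generators bounded \emph{independently of $\eta$}. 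The paper spends Lemma~\ref{lemme_schottky_theta} together with the appendix Lemma~\ref{lemme_densité} on exactly this interlocking of constraints.

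Your ``only if'' direction has a real gap in the boundary case $\theta\in\partial\cC(\G)$. There the supporting functional satisfies $\varphi(\theta)=0$, so the inequality $|\varphi(\mu(\g))|\le O(1)$ confines $\mu(\g)$ only to an unbounded slab; discreteness of $\G$ does not make the set of admissible $\g$ finite (powers of a single loxodromic element with $\lambda$ proportional to $\theta$ already give infinitely many, each realising a different $t$). The paper's argument (Proposition~\ref{prop_transitivité}) is quite different and does not invoke discreteness: from a dense $\phi^\theta$-orbit it extracts, for \emph{every} $v\in\fa$, elements $\g_n\in\G$ with $\lambda(\g_n)$ --- not $\mu(\g_n)$ --- within a bound of $v+t_n\theta$ that is uniform in $v$. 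Since $\lambda(\g_n)\in\cC(\G)$ exactly (not merely asymptotically in direction), taking $v$ far from $\cC(\G)$ on the positive side of the supporting hyperplane gives the contradiction. The passage from Cartan to Jordan projection is what makes the boundary case work, and it requires the loxodromicity and cocycle computations carried out in that proposition; your supporting-hyperplane-plus-discreteness shortcut does not survive when $\varphi(\theta)=0$.
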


Taking $\widetilde{\Omega}\subset G/M$ to be the universal cover of $\Omega(X)$, we remark that this Theorem is a direct consequence of the following statement, where $\G$ is a Zariski dense semigroup of $G$.
We insist that under this hypothesis, $\G$ is not necessarily a subgroup and can even be non discrete. 

\begin{theorem}\label{th-main}
Let $G$ be a semisimple, connected, real linear Lie group, of non-compact type.
Let $\G$ be a Zariski dense semigroup of $G$.
Let $\theta \in \fa^{++}$.

Then $\theta$ is in the interior of the limit cone if and only if for all nonempty open subsets $\widetilde{U},\widetilde{V}\subset \widetilde{\Omega}(X) $, there exists $T> 0$ so that for any later time $t>T$, there exists $\g_t \in \G$ with
			$$\g_t \widetilde{U} \cap \phi_{t}^\theta ( \widetilde{V})\neq \emptyset .$$
\end{theorem}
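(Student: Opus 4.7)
The plan is to split the proof into two independent pieces: a separation argument in $\fa$ using Benoist's description of the limit cone for the necessity, and a Ping-Pong construction of loxodromic elements tracking the ray $\R_+\theta$ for the sufficiency.

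For the necessity of $\theta \in \Int \cC(\G)$, I would work in Hopf-type product charts on $\widetilde{\Omega}(X)$: a point is parametrised by a pair of transverse flags together with an $\fa$-coordinate on which $A$ acts by translation. Benoist's theorem says that the Cartan projection of $\G$ stays at bounded distance from $\cC(\G)$. If $\theta \notin \Int \cC(\G)$, separate the ray $\R_+\theta$ from $\cC(\G)$ by a closed half-space. Shrinking $\widetilde{U}, \widetilde{V}$ to product boxes with bounded $\fa$-extent, the $\fa$-coordinate of any point of $\g \widetilde{U}$ lies in a bounded neighbourhood of $\cC(\G)$, while that of $\phi_t^\theta \widetilde{V}$ escapes along $\theta$ linearly in $t$; for $t$ larger than an explicit threshold, no intersection is possible.

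For the sufficient direction, Benoist's density theorem for Jordan projections of Zariski dense semigroups lets me choose loxodromic elements $g_1, \ldots, g_N \in \G$ (with $N \ge \dim \fa$) whose Jordan projections $\lambda(g_i)$ span $\fa$ linearly and whose positive convex cone has interior containing $\theta$, and which are in Ping-Pong position on the Furstenberg boundary (disjoint small neighbourhoods of attracting and repelling flags). Using density of attracting flags of loxodromic elements of $\G$ in the Furstenberg limit set, and the freedom to pre- and post-multiply by fixed auxiliary elements of $\G$, I arrange that the attracting flag of $g_1$ is placed inside the forward boundary neighbourhood of $\widetilde{V}$ and the repelling flag of $g_N$ inside the backward boundary neighbourhood of $\widetilde{U}$. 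For every $t > T$ large, write $t\theta = \sum c_i(t) \lambda(g_i)$ with $c_i(t) \ge 0$, set $n_i := \lfloor c_i(t) \rfloor$ and take $\g_t := g_1^{n_1} \cdots g_N^{n_N}$ (modulo the fixed auxiliary factors). Standard Ping-Pong/Schottky estimates then show that $\g_t$ is loxodromic with Jordan projection $t\theta + O(1)$ and with attracting and repelling flags uniformly close to prescribed points; combined with the product structure of $\widetilde{\Omega}(X)$ and the $O(1)$-error in $\fa$ being absorbed by the $\fa$-extent of $\widetilde{U}$, this yields $\g_t \widetilde{U} \cap \phi_t^\theta \widetilde{V} \neq \emptyset$.

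The main obstacle is exactly this uniform control: a single finite family $\{g_i\}$ must produce a suitable $\g_t$ for \emph{every} $t > T$, not just a discrete set of times, while keeping both the Jordan projection at distance $O(1)$ from $t\theta$ and the attracting/repelling data in fixed small neighbourhoods independent of $t$. Ping-Pong uniformity supplies the boundary control; the remaining slack in the continuous parameter $t$ is absorbed by openness of $\widetilde{U}, \widetilde{V}$ in the $\fa$-direction, which is also why the statement only demands the property from some threshold $T$ onwards.
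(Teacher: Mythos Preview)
Your sufficiency argument has a genuine gap, and it is precisely the one the paper has to work hardest to close. Writing $t\theta=\sum c_i(t)\lambda(g_i)$ and setting $n_i=\lfloor c_i(t)\rfloor$ gives
\[
\Big\|\sum_i n_i\lambda(g_i)-t\theta\Big\|\le \sum_i\|\lambda(g_i)\|,
\]
a constant that depends on the fixed generators $g_i$ and is \emph{not} at your disposal. The open sets $\widetilde U,\widetilde V$ are given to you; their $\fa$-extent may be an arbitrarily small $\eta>0$, and you cannot enlarge it. So the ``$O(1)$ slack'' is in general much bigger than what the openness of $\widetilde U$ can absorb. Replacing $g_i$ by powers makes the gaps worse; and for discrete $\Gamma$ there is a positive lower bound on $\|\lambda(\gamma)\|$ for loxodromic $\gamma$, so you cannot shrink the generators either. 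The paper fixes this with an extra ingredient you do not mention: non-arithmeticity of the Jordan spectrum (Benoist). Starting from a Schottky family $S$ with $\theta$ in the interior of the cone $\sum_{g\in S}\R_+\lambda(g)$, it passes to high powers $S^n$, then uses non-arithmeticity of the semigroup they generate together with an elementary density lemma to adjoin a bounded number of further elements $F$ so that the \emph{subgroup} generated by $\lambda(S^n\cup F)$ is $\eta/2$-dense in $\fa$. A second lemma then shows that the \emph{semigroup} $\sum_g\N\lambda(g)$ is $\eta/2$-dense in a translate of the cone; this is what produces, for every $t\ge T$, integers $n_t(i)$ with $\sum n_t(i)\lambda(g_i)$ within $\eta/2$ of $x+t\theta$. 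The Schottky estimates then finish as you indicate. The crucial point is that the number of generators in $S^n\cup F$ is bounded by $3\dim\fa$, independently of $\eta$, so that the additive error $lC_{r,\varepsilon}$ from Benoist's product formula can still be made $\le\eta/2$ by choosing $\varepsilon$ small.

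Your necessity sketch also needs repair. If $\theta\in\partial\cC(\Gamma)$ then $\R_+\theta\subset\cC(\Gamma)$ and you cannot separate them by a half-space; you only get a supporting hyperplane, and the $\fa$-coordinate of $\phi_t^\theta\widetilde V$ does not escape from $\cC(\Gamma)$. Moreover, the $\fa$-coordinate of $\gamma\widetilde U$ is governed by the Iwasawa cocycle $\sigma(\gamma,\xi)$, not by $\mu(\gamma)$, and the assertion that this stays in a bounded neighbourhood of $\cC(\Gamma)$ uniformly over $\gamma\in\Gamma$ and $\xi$ in a fixed open set is not a standard fact and is not what Benoist's theorem says. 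The paper instead deduces necessity from transitivity: from a dense orbit it constructs, for \emph{every} $v\in\fa$, elements $\gamma_n\in\Gamma$ with $\lambda(\gamma_n)$ within a bound \emph{independent of $v$} of $v+t_n\theta$; choosing $v$ far on the wrong side of a supporting hyperplane at $\theta$ then contradicts $\lambda(\gamma_n)\in\cC(\Gamma)$ if $\theta\in\partial\cC(\Gamma)$.
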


In the first section, we give some background on globally symmetric spaces. We introduce the space of Weyl chambers, the Weyl chamber flow, give a compactification of the space of Weyl chambers and present a higher rank generalization of the Hopf coordinates.

In the second section, we introduce the main tools: Schottky semigroups and estimations on the spectrum of products of elements in $G$. 

In the third section, we introduce the \emph{non-wandering Weyl chambers set}, it is a closed $AM-$invariant subset $\Omega(X)\subset\G \backslash G /M$. 
Then we study topological transitivity in Proposition \ref{prop_transitivité}.
We prove that if the flow $\phi_t^\theta$ is topologically transitive in $\Omega(X)$, where $\theta\in \fa^{++}$, then the direction $\theta$ must be in the interior of the limit cone.
Since topological mixing implies topological transitivity, this
provides one direction of the main Theorem \ref{th-main}.

In the last section, we prove a key Proposition \ref{prop-moving along fa} using density results that come from non-arithmeticity of the length spectrum. Then we prove the main theorem.

In the appendix we prove a density lemma of subgroups of $\R^n$ needed in the proof of Proposition \ref{prop-moving along fa}. 

\begin{center}
\fbox{
\begin{minipage}{0.7\textwidth}
In the whole article, $G$ is a semisimple, connected, real linear Lie group, of non-compact type.
\end{minipage}}
\end{center}

\section{Background on symmetric spaces}
Classical references for this section are \cite[Chapter 8, \S 8.B, 8.D, 8.E, 8,G]{thirion2007sous}, \cite[Chapter III, \S 1--4]{guivarc2012compactifications} and  \cite[Chapter IV, Chapter V, Chapter VI]{helgason1978differential}. 

Let $K$ be a maximal compact subgroup of $G$.
Then $X=G/K$ is a globally symmetric space of non-compact type.
The group $G$ is the identity component of its isometry group.
It acts transitively on $X$, by left multiplication. 
We fix a point $o=K\in X$.
Then $K$ is in the fixed point set of the involutive automorphism induced by the geodesic symmetry in $o$ (cf. \cite[Chapter VI, Thm 1.1]{helgason1978differential}).

Denote by $\mathfrak{g}$ (resp. $\mathfrak{k}$) the Lie algebra of $G$ (resp. $K$).  
The differential of the involutive automorphism induced by the geodesic symmetry in $o$ is a \emph{Cartan involution} of $\mathfrak{g}$. 
Then $\mathfrak{k}$ is the eigenspace of the eigenvalue $1$ (for the Cartan involution) and we denote by $\mathfrak{p}$ the eigenspace of the eigenvalue $-1$.   
The decomposition $\mathfrak{g}=\mathfrak{k}\oplus \mathfrak{p}$ is a \emph{Cartan decomposition}.

\subsection{Flats, Weyl Chambers, classical decompositions}
A \emph{flat} of the symmetric space $X$ is a totally geodesic, isometric embedding of a Euclidean space.
We are interested in flats of maximal dimension in $X$, called \emph{maximal flats}.
One can construct the space of maximal flats following \cite[Chapter 8, \S 8.D, 8,D]{thirion2007sous} thanks to \cite[Chapter V, Prop. 6.1]{helgason1978differential}. 
Let $\fa\subset \mathfrak{p}$ be a \emph{Cartan subspace} of $\mathfrak{g}$ i.e. a maximal abelian subspace such that the adjoint endomorphism of every element is semisimple. 
We denote by $A$ the subgroup $\exp(\fa)$.  
The \emph{real rank} of the symmetric space $X$, denoted by $r_G$, is the dimension of the real vector space $\fa$.
	\begin{definition}\label{defin_parametrized_flat}
	A \emph{parametrized flats} is an embedding of $\fa$ of the form $gf_0$, where $g\in G$ and $f_0$ is the map defined by  
		\begin{align*}
		f_0: \fa & \longrightarrow X \\
		 v & \longmapsto \exp(v) o \quad .
		\end{align*}
	We denote by $\mathcal{W}(X)$ the set of parametrized flats of $X$. 
	\end{definition}
By definition, the set of parametrized flats is the orbit of $f_0$ under the left-action by multiplication of $G$.
The stabilizer of $f_0$ is the centralizer of $A$ in $K$, denoted by $M$.    
We deduce that the set of parametrized flats $\mathcal{W}(X)$ identifies with the homogeneous space $G/M$.
For any parametrized flat $f\in \cW(X)$, there is an element $g_f$ in $G$ such that $f=g_f f_0$.
Hence, the map 
\begin{align*}\label{flats_weylchambers}
\cW (X) & \overset{\sim}{\longrightarrow} G/M \\
 f  & \longmapsto g_fM 
\end{align*}
is a $G-$equivariant homeomorphism.

For any linear form $\alpha$ on $\fa$, set $\fg_\alpha:= \{v\in \fg \vert \forall u\in \fa , \; [u,v]=\alpha(u)v \}$.
The set of restricted roots is $\Sigma:=\{\alpha\in \fa^*\setminus \{0\} \vert \fg_\alpha\neq \{0\} \}.$ 
The kernel of each restricted root is a hyperplane of $\fa$. 
The \emph{Weyl Chambers} of $\fa$ are the connected components of $\fa \setminus \cup_{\alpha \in\Sigma} \ker(\alpha)$. 
We fix such a component, call it the \emph{positive Weyl chamber} and denote it (resp. its closure) by $\fa^{++}$ (resp. $\fa^+$).

We denote by $N_K(A)$ the normalizer of $A$ in $K$.
The group $N_K(A)/M$ is called the \emph{Weyl group}.
The positive Weyl chamber of $\fa$ allows us to tesselate the maximal flats in the symmetric space $X$.
Indeed, $f_0(\fa^+)$ is a fundamental domain for the action of the Weyl group on the maximal flat $f_0(\fa)$ and $G$ acts transitively on the space of parametrized flats.
Finally, the orbit $G.f_0(\fa^+)$ identifies with the space of parametrized flats, the image of $g.f_0(\fa^+)$ is a \emph{geometric Weyl chamber}.
This explains why the set of parametrized flats is also called the \emph{space of Weyl chambers}.
For any geometric Weyl chamber $f(\fa^+)\in G.f_0(\fa^+)$, the image of $0\in \fa^+$ is the \emph{origin}.
 Furthermore,
$$G/M\simeq \mathcal{W}(X)\simeq G.f_0(\fa^+).$$ 
	\begin{definition}\label{definition_weyl_chamber_flow}
	The right-action of $\fa$ on $\mathcal{W}(X)$ is defined by 
	$\alpha \cdot f: v \mapsto f(v+\alpha)$ for all $\alpha \in \fa$ and $f\in \mathcal{W}(X)$. The \emph{Weyl Chamber Flow}, is
	defined for all $\theta\in \fa_1^{++}$ and $f\in \mathcal{W}(X)$ by
		\begin{align*}
		\phi^\theta(f): \R & \longrightarrow \mathcal{W}(X) \\
		t & \longmapsto \phi_t^\theta(f)=f(v+\theta t)=f(v)e^{\theta t}. 
		\end{align*}	
	\end{definition}
Remark that the Weyl Chamber Flow $\phi^\theta_t$ is also the right-action of the one-parameter subgroup $\exp(t \theta)$ on the space of Weyl chambers.

The set of \emph{positive roots}, denoted by $\Sigma^+$, is the subset of roots which take positive values in the positive Weyl chamber.
The positive Weyl chamber also allows to define two particular nilpotent subalgebras $\fn =\oplus_{\alpha\in \Sigma^+} \fg_\alpha$ and $\fn^- =\oplus_{\alpha\in \Sigma^+} \fg_{-\alpha}$.
Finally, set $A^+:= \exp(\fa^+)$, $A^{++}:=\exp(\fa^{++})$, $N:=\exp(\fn)$ and $N^-:=\exp(\fn^-)$.
For all $a\in A^{++}$, $h_+\in N$, $h_-\in N^-$ notice that 
\begin{equation}\label{equ_unipotents}
a^{-n}h_{\pm}a^n  \underset{\pm \infty}{\longrightarrow} id_G .
\end{equation}

\begin{definition}
	For any $g\in G$, we define, by Cartan decomposition, a unique element $\mu(g)\in \fa^+$ such that $g\in K \exp(\mu(g)) K$. 
	The map $\mu : G \rightarrow \fa^+$ is called the \emph{Cartan projection}. 
\end{definition}
	
	The Cartan projection allows to define an $\fa^+-$valued function on $X\times X$, denoted by $d_{\fa^+}$, following \cite[ Def-Thm 8.38]{thirion2007sous}.
	For any $x,x'\in X$, there exists $g,g'\in G$ so that $x=gK$ and $x'=g'K$, we set
	$$d_{\fa^+}(x,x'):=\mu (g'^{-1}g).$$
	 This function is independent of the choice of $g$ and $g'$. 
Recall \cite[Chapter V, Lemma 5.4]{helgason1978differential} that $\fa$ is endowed with a scalar product coming from the Killing form on $\fg$, and the norm of $d_{\fa^+} (x,x')$ coincides with the distance between $x$ and $x'$ in the symmetric space $X$. \\ 
	
An element of $G$ is \emph{unipotent} if all its eigenvalues are equal to $1$ and equivalently if it is the exponential of a nilpotent element. 
	An element of $G$ is \emph{semisimple} if it is diagonalizable over $\C$, \emph{elliptic} (resp. \emph{hyperbolic}) if it is semisimple with eigenvalues of modulus $1$ (resp. real eigenvalues).
	Equivalently, elliptic (resp. hyperbolic, unipotent) elements are conjugated to elements in $K$ (resp. $A$, $N$). 
	
	Any element $g\in G$ admits a unique decomposition (in $G$) $g=g_eg_hg_u$, called the \emph{Jordan decomposition},
	where $g_e$, $g_h$ and $g_u$ commute and where $g_e$ (resp. $g_h$, $g_u$) is elliptic (resp. hyperbolic, unipotent).  
	The element $g_e$ (resp. $g_h$, $g_u$) is called the \emph{elliptic part} (resp. \emph{hyperbolic part}, \emph{unipotent part}) of $g$.  \\
	
\begin{definition}
	For any element $g\in G$, there is a unique element $\lambda(g)\in \fa^+$ such that the hyperbolic part of $g$ is conjugated to $\exp(\lambda(g))\in A^+$.
	The map $\lambda : G \rightarrow \fa^+$ is called the \emph{Jordan projection}. 
\end{definition}	
	An element $g\in G$ is \emph{loxodromic}  if $\lambda(g) \in \fa^{++}$.
	Since any element of $N$ that commute with $\fa^{++}$ is trivial, the unipotent part of loxodromic elements is trivial.
	Furthermore, the only elements of $K$ that commute with $\fa^{++}$ are in $M$. 
	We deduce that the elliptic part of loxodromic elements are conjugated to elements in $M$.
	Hence, for any loxodromic element $g\in G$, there exists $h_g\in G$ and $m(g)\in M$ so that we can write $g=h_g m(g)e^{\lambda(g)} h_g^{-1}$.
	For any $m\in M$ we can also write $g=(h_g m) (m^{-1} m(g)m ) e^{\lambda(g)} (h_gm)^{-1}$.	 
	This allows us to associate to any loxodromic element $g\in G$, an \emph{angular} part $m(g)$ which is defined up to conjugacy by $M$.
	
	The spectral radius formula \cite[Corollary 5.34]{BenoistQuint}
	$$\lambda(g) =\lim_{n\tv \infty} \frac{1}{n}\mu(g^n)$$
	allows to compute the Jordan projection thanks to the Cartan projection. 

\begin{definition}
	For any $g\in G$, there exists a unique triple $ (k,v,n)\in K\times \fa \times N$ such that $g=k\exp(v)n$.  
	Furthermore, the map 
\begin{align*}
K \times \fa \times N &\longrightarrow G \\
(k,v,n) & \longmapsto ke^v n
\end{align*}	
is a diffeomorphism called the \emph{Iwasawa decomposition}.
\end{definition}

\subsection{Asymptotic Weyl chambers, Busemann-Iwasawa cocycle}

The main references for this subsection are \cite[Chapter 8, \S 8.D]{thirion2007sous}, \cite{guivarc2012compactifications} and \cite{BenoistQuint}.

We endow the space of geometric Weyl chambers with the equivalence relation 
$$ f_1(\fa^+)\sim f_2(\fa^+) \Leftrightarrow \sup_{u\in \fa^{++}} d(f_1(u),f_2(u))<\infty.$$ 
Equivalently, $f_1(\fa^+) \sim f_2(\fa^+)$ if and only if for any $v\in \fa^{++}$, the geodesics $t\mapsto f_1(tv)$ and $t \mapsto f_2(t v)$ are at bounded distance when $t\rightarrow + \infty$.
Equivalence classes for this relation are called \emph{asymptotic Weyl chambers}. 
We denote by $\mathcal{F}(X)$ the set of asymptotic Weyl chambers and by $\eta_0$ the asymptotic class of the Weyl chamber $f_0(\fa^+)$. 

\begin{fait}
The set $\mathcal{F}(X)$ identifies with the \emph{Furstenberg boundary} $G/P$ where $P=MAN$.
Furthermore
$$G/P \simeq \mathcal{F}(X) \simeq K/M \simeq K .\eta_0.$$
\end{fait}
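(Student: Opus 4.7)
The plan is to realise the three identifications as successive steps of an orbit–stabiliser argument combined with the Iwasawa decomposition. Since $G$ acts by isometries on $X$, its left action on $\mathcal{W}(X)$ descends to an action on $\mathcal{F}(X)$, and this action is transitive because $G$ is already transitive on parametrized flats. Hence $\mathcal{F}(X)$ is a homogeneous $G$-space and everything reduces to identifying the stabiliser of the base point $\eta_0$, which I claim equals the minimal parabolic $P = MAN$; that already yields $\mathcal{F}(X) \simeq G/P$.

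For the inclusion $\mathrm{Stab}_G(\eta_0) \supset MAN$, I would argue factor by factor. The subgroup $M$ fixes the parametrized flat $f_0$ pointwise, hence fixes its asymptotic class. An element $a \in A$ acts on $f_0$ by the reparametrization $v \mapsto f_0(v + \log a)$, so $a \cdot f_0(\fa^+)$ and $f_0(\fa^+)$ differ by a bounded translation and stay at uniformly bounded distance. For $n \in N$, writing $n = \exp(\sum_\alpha X_\alpha)$ with $X_\alpha \in \fg_\alpha$, one has $e^{-u} n e^u = \exp(\sum_\alpha e^{-\alpha(u)} X_\alpha)$, which remains in a fixed compact subset of $N$ as $u$ ranges over $\fa^+$ (since $\alpha(u) \geq 0$), so
$$\sup_{u \in \fa^+} d\bigl(n f_0(u),\, f_0(u)\bigr) \;=\; \sup_{u \in \fa^+} d\bigl(e^{-u} n e^{u}\, o,\, o\bigr) \;<\; \infty;$$
the contraction relation \eqref{equ_unipotents} gives the stronger $\to 0$ statement in the interior.

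For the reverse inclusion, given $g \in \mathrm{Stab}_G(\eta_0)$ I would write $g = kan$ by Iwasawa. Since $a$ and $n$ already lie in the stabiliser, so does $k$. The key geometric input is then that $k$ fixes the origin $o$ and sends the geometric Weyl chamber $f_0(\fa^+)$ to a chamber with the same origin and the same asymptotic class; in the CAT(0) space $X$, two geodesic rays emanating from $o$ and staying at bounded distance must coincide, by convexity of $t \mapsto d(\gamma_1(t), \gamma_2(t))$. Hence the two chambers agree as subsets of $X$, and the stabiliser in $K$ of the geometric chamber $f_0(\fa^+)$ is exactly $M$, so $k \in M$ and $g \in MAN$.

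The remaining identifications are formal. The Iwasawa decomposition $G = KAN$ together with $K \cap P = M$ gives $G/P \simeq K/M$, and the orbit map $kM \mapsto k\eta_0$ is a continuous bijection $K/M \to K\cdot \eta_0$, which is a homeomorphism by compactness of $K$. The step I expect to be most delicate is the CAT(0) rigidity statement in the reverse inclusion, since this is where the nonpositive curvature of $X$ enters essentially to close the loop between the geometric definition of $\mathcal{F}(X)$ and the algebraic description via $G/P$.
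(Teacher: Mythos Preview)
Your argument is correct but follows a genuinely different route from the paper's. For the identification $\mathrm{Stab}_G(\eta_0)=P$, the paper does not split into two inclusions: it takes an arbitrary $g\in G$, applies the \emph{Bruhat decomposition} $g=p_1wp_2$ with $p_i\in P$ and $w\in N_K(A)$, and computes $e^{-u}ge^u=(e^{-u}p_1e^u)\,e^{-u+\mathrm{Ad}(w)u}\,w\,(e^{-u}p_2e^u)$. The conjugates of the $p_i$ are bounded by \eqref{equ_unipotents}, so boundedness of $e^{-u}ge^u$ over $u\in\fa^{++}$ is equivalent to boundedness of $e^{-u+\mathrm{Ad}(w)u}$, which forces $w\in M$. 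This handles both inclusions at once, stays entirely within the Lie-theoretic framework already set up, and avoids any appeal to CAT(0) geometry.

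Your approach instead proves $P\subset\mathrm{Stab}(\eta_0)$ factor by factor (straightforward), and for the reverse inclusion reduces via Iwasawa to $k\in K\cap\mathrm{Stab}(\eta_0)$ and then uses the convexity of $t\mapsto d(kf_0(tv),f_0(tv))$ in a CAT(0) space to conclude that $k$ fixes each ray of $f_0(\fa^{++})$ pointwise, hence $k\in M$. This is more geometric and arguably more conceptual, making transparent where the nonpositive curvature enters; it would also transfer to settings where Bruhat is unavailable. Note that your CAT(0) step in fact yields \emph{pointwise} fixing of the chamber, which is stronger than ``agree as subsets'' and makes the passage to $k\in M$ immediate (an isometry of the flat fixing an open set is the identity). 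Both routes are valid; the paper's has the advantage of being self-contained given the decompositions already recalled.
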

	\begin{proof}
	Since $G$ acts transitively on the space of Weyl chambers, it also acts transitively on the set of asymptotic Weyl chambers.
	
	We show that $P$ is the stabilizer of $\eta_0$.
	For any $g\in G$ and $u\in \fa^{++}$, we compute the distance
	  $$d(gf_0(u),f_0(u))= \Vert d_{\fa^+}(gf_0(u),f_0(u))\Vert=\Vert \mu(e^{-u}ge^u)\Vert.$$
	By Bruhat decomposition (see \cite[Chapter IX, Thm 1.4]{helgason1978differential}),
	there exists an element $w$ in the normalizer of $A$ in $K$ and elements $p_1,p_2\in P=MAN$ so that 
	$g=p_1 w p_2.$   
	Then 
	$$e^{-u}ge^u= \big(e^{-u}p_1e^u\big) e^{-u} (w e^u w^{-1}) w \big(e^{-u}p_2e^u\big).$$
	Note that by equation (\ref{equ_unipotents}), the sets $\lbrace e^{-u}p_i e^u \rbrace_{u\in \fa^{++},i=1,2}$ are bounded.
	Hence, the sets $\lbrace e^{-u} g e^u \rbrace_{u\in \fa^{++}}$ and $\lbrace e^{-u} w e^u w^{-1} \rbrace_{u\in \fa^{++}}$ have the same behavior. 
	Remark now that $ e^{-u} w e^u w^{-1}=e^{-u+Ad(w)u}$, which is bounded uniformly in $\fa^{++}$ only when $w\in M$.
	We deduce that $\lbrace e^{-u} g e^u \rbrace_{u\in \fa^{++}}$ is bounded only when $g\in P$.
	Hence the subgroup $P$ is the stabilizer of the asymptotic class $\eta_0$.

	The geometric Weyl chambers whose origin is $o\in X$ are in the orbit $K. f_0(\fa^+)$. 
Any equivalence class in $\mathcal{F}(X)$ admits, by Iwasawa decomposition, a unique representative in $K. f_0(\fa^+)$.
Moreover, $K/M$ identifies with the orbit $K.f_0(\fa^+)$ since $M$ is the stabilizer of $f_0$ in $K$.
	\end{proof}

	For any asymptotic Weyl chamber $\eta \in \mathcal{F}(X)$ and $g\in G$, consider, by Iwasawa decomposition, the unique element $\sigma(g,\eta)\in \fa$, called the \emph{Iwasawa cocycle}, such that if $k_\eta \in K$ satisfies $\eta=k_\eta \eta_0$, then 
	$$gk_\eta\in K \exp(\sigma(g,\eta))N.$$
The \emph{cocycle relation} holds (cf \cite[Lemma 5.29]{BenoistQuint}) i.e. for all $g_1,g_2 \in G$ and $\eta \in \mathcal{F}(X)$ then
$$\sigma(g_1g_2,\eta)=\sigma(g_1,g_2 \eta)+ \sigma(g_2,\eta).$$
	
	For any pair of points $x,y\in X$, any asymptotic Weyl chamber $\eta \in \mathcal{F}(X)$ and $u\in \fa^{++}$, we consider a representative $f_\eta(\fa^+)$ of $\eta$ and define the \emph{Busemann cocycle} by
	$$\beta_{f_\eta,u}(x,y) = \lim_{t\tv +\infty} d_{\fa^+} (f_\eta(tu),x) - d_{\fa^+} (f_{\eta}(tu),y).$$ 

It turns out that the Busemann cocycle depends neither on the choice of the geometric Weyl chamber in the class $\eta$, nor on the choice of $u\in \fa^{++}$.
We will write $\beta_{f_\eta,u}(x,y)=\beta_{\eta}(x,y)$.
By \cite[Corollary 5.34]{BenoistQuint}, the Iwasawa and Busemann cocycle coincide in the sense that for all $g\in G$, $\eta \in \mathcal{F}(X)$ and $u\in \fa^{++}$,
\begin{equation} \label{eq_Busemann_Iwasawa}
\beta_{f_\eta,u}(g^{-1}o,o)=\sigma(g,\eta). 	
\end{equation}

We associate attractive and repulsive asymptotic geometric Weyl chambers to loxodromic elements of $G$ as follows.

Recall that for any loxodromic element $g\in G$, there is an element $h_g\in G$ and an angular part $m(g)\in M$ so that 
$g=h_g e^{\lambda(g)} m(g)h_g^{-1}.$
We set $g^+:=[h_g.f_0(\fa^+)]$ and $g^-:=[h_g.f_0(-\fa^+)]$. 
Then $g^+\in \ccF(X)$ (resp. $g^-$) is called the \emph{attractive} (resp. \emph{repulsive}) asymptotic Weyl chamber. 
\begin{fait} \label{fait_lox}
	For any loxodromic element $g\in G$, we have $\lambda(g)=\sigma(g, g^+)$. 
	\end{fait}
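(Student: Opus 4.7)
The plan is purely algebraic: reduce $\sigma(g, g^+)$ via the cocycle relation to the computation of $\sigma$ for an element of $MA$ at the base point $\eta_0$, where the Iwasawa decomposition is immediate.

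First, I observe that under the identification $\ccF(X) \simeq G/P$ the attractive class satisfies $g^+ = h_g \cdot \eta_0$, because $\eta_0 = [f_0(\fa^+)]$ and $G$ acts on asymptotic classes via its action on representative flats. Factoring $g = h_g \cdot e^{\lambda(g)} m(g) \cdot h_g^{-1}$ and applying the cocycle relation twice, using that $e^{\lambda(g)} m(g) \in MA \subset P$ stabilizes $\eta_0$ (the preceding Fact identifies $P$ as the stabilizer of $\eta_0$), one obtains
$$\sigma(g, g^+) \;=\; \sigma(h_g, \eta_0) \;+\; \sigma(e^{\lambda(g)} m(g), \eta_0) \;+\; \sigma(h_g^{-1}, h_g \eta_0).$$
Applying the cocycle relation to $\Id = h_g^{-1} h_g$ yields $\sigma(h_g^{-1}, h_g \eta_0) = -\sigma(h_g, \eta_0)$, so the first and third terms cancel.

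It then remains to compute $\sigma(e^{\lambda(g)} m(g), \eta_0)$. Because $M$ and $A$ commute, this element equals $m(g) \cdot e^{\lambda(g)} \cdot \Id$, which is already in $KAN$ form with trivial nilpotent factor; taking $k_{\eta_0} = \Id$ as a representative of $\eta_0$ in $K$, one reads off directly that the $\fa$-component is $\lambda(g)$, which finishes the argument.

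The main obstacle is merely bookkeeping. The paper uses both conventions $g = h_g m(g) e^{\lambda(g)} h_g^{-1}$ and $g = h_g e^{\lambda(g)} m(g) h_g^{-1}$ (equivalent by $A$-$M$ commutation), and one must track consistently on which side $\eta_0$ is being pushed forward at each application of the cocycle identity; with a sloppy convention choice one may wrongly conclude that an $\mathrm{Ad}(m(g))$-twist appears in the final formula. An alternative geometric route via $\sigma(g, g^+) = \beta_{g^+}(g^{-1}o, o)$ with representative flat $h_g f_0$, exploiting that $g$ translates this flat by $\lambda(g)$ along the chamber direction, would yield the same answer but is less efficient.
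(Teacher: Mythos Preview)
Your proof is correct and takes a genuinely different route from the paper's. The paper proceeds geometrically via the Busemann cocycle: it writes $\sigma(g,g^+)=\beta_{f_g,u}(g^{-1}o,o)$ as the limit $\lim_{t\to+\infty}\mu(gh_ge^{tu})-\mu(h_ge^{tu})$, then applies Iwasawa decomposition to $h_g$ and uses the contraction $e^{-tu}ne^{tu}\to \Id_G$ together with continuity of the Cartan projection to extract $\lambda(g)$ from the asymptotics. Your argument, by contrast, stays entirely on the algebraic side: the cocycle relation collapses $\sigma(g,g^+)$ to $\sigma(e^{\lambda(g)}m(g),\eta_0)$, which is read off directly from the $KAN$ factorization of an element already in $MA$. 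This is shorter, avoids all limiting arguments, and makes no appeal to the Cartan projection; the only ingredients are the cocycle identity and the observation that $MA\subset P$ fixes $\eta_0$. Amusingly, the ``alternative geometric route'' you dismiss in your final paragraph is essentially what the paper does.
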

\begin{proof}
Let $g\in G$ be a loxodromic element.
Consider an element $h_g\in G$ and an angular part $m(g)\in M$ so that $g=h_g e^{\lambda(g)}m(g)h_g^{-1}$.
Denote by $f_g$ the parametrized flat $f_g: v \mapsto h_g e^v o$. 
Then the geometric Weyl chamber $f_g(\fa^+)$ (resp. $f_g(-\fa^+)$) is a representative of the limit points $g^+$ (resp. $g^-$).

Fix any $u\in \fa^{++}$.
Then by equation (\ref{eq_Busemann_Iwasawa}), we deduce 
\begin{align*}
\sigma(g,g^+)=\beta_{f_g,u}(g^{-1}o,o) &= \lim_{t\tv +\infty} d_{\fa^+} (f_g(tu),g^{-1}o) - d_{\fa^+} (f_g(tu),o) \\
&= \lim_{t\tv +\infty} \mu(g h_g e^{tu})-\mu(h_ge^{tu}) \\
&= \lim_{t\tv +\infty} \mu (h_g e^{\lambda(g)+tu}m(g))-\mu(h_g e^{tu}).
\end{align*} 
By left and right $K-$invariance of the Cartan projection, we deduce that 
$\mu (h_g e^{\lambda(g)+tu}m(g))=\mu (h_g e^{\lambda(g)+tu}).$
Hence 
$$\sigma(g,g^+)= \lim_{t\tv +\infty} \mu (h_g e^{\lambda(g)+tu})-\mu(h_g e^{tu}).$$
By Iwasawa decomposition on $h_g$, there exists a unique unipotent element $n\in N$ so that
$h_g \in K e^{\sigma(h_g , \eta_0)}n$.
Hence, for all $t\in \R_+$,
\begin{align*}
\mu (h_g e^{\lambda(g)+tu})-\mu(h_g e^{tu})
&= \mu (e^{\sigma(h_g , \eta_0)}n e^{\lambda(g)+tu} ) \\
& \quad \quad \quad \quad - \mu (e^{\sigma(h_g , \eta_0)}n  e^{tu}) \\
&= \mu (e^{\sigma(h_g , \eta_0) + \lambda(g)+tu}  e^{-\lambda(g)-tu}n e^{\lambda(g)+tu} ) \\
& \quad \quad \quad \quad - \mu (e^{\sigma(h_g , \eta_0)+tu} e^{-tu} n  e^{tu}).
\end{align*}
Since $u \in \fa^{++}$, then for any $t\in \R_+$ large enough, 
$\sigma(h_g , \eta_0) + \lambda(g)+tu$ and $\sigma(h_g , \eta_0)+tu$ are in $\fa^+$.
Furthermore, by equation (\ref{equ_unipotents})~, we deduce 
$$ \underset{t \rightarrow + \infty}{\lim} e^{-\lambda(g)-tu}n e^{\lambda(g)+tu} = \underset{t \rightarrow + \infty}{\lim} e^{-tu}n e^{tu}= id_G.$$
Hence, by continuity of the Cartan projection, when $t\rightarrow + \infty$,
\begin{align*}
\mu (e^{\sigma(h_g , \eta_0) + \lambda(g)+tu}  e^{-\lambda(g)-tu}n e^{\lambda(g)+tu} ) &=  \mu(e^{\sigma(h_g , \eta_0) + \lambda(g)+tu}) +o(1)\\
\mu (e^{\sigma(h_g , \eta_0)+tu} e^{-tu} n  e^{tu}) 
&= \mu (e^{\sigma(h_g , \eta_0)+tu})+o(1),
\end{align*}
and,
\begin{align*}
d_{\fa^+} (f_g(tu),g^{-1}o) - d_{\fa^+} (f_g(tu),o)
&= \sigma(h_g , \eta_0) + \lambda(g)+tu - (\sigma(h_g , \eta_0)+tu ) + o(1) \\
&= \lambda(g) + o(1).
\end{align*}

Finally, $\lambda(g)=\sigma(g, g^+)$.
\end{proof}

\subsection{Hopf parametrization}
Our main reference for this subsection is \cite[Chapter 8, \S 8.G.2]{thirion2007sous}.

In the geometric compactification of symmetric spaces of non-compact type, any bi-infinite geodesic defines opposite points in the geometric boundary.
In a similar way, we introduce asymptotic Weyl chambers in general position.

We endow the product $\mathcal{F}(X)\times \mathcal{F}(X)$ with the diagonal left $G-$action.
For any $(\xi, \eta) \in \mathcal{F}(X)\times \mathcal{F}(X)$ and $g\in G$, we set 
$g.(\xi, \eta):= (g.\xi, g.\eta).$
For any parametrized flat $f\in \mathcal{W}(X)$, denote by $f_+$ (resp. $f_-$) the asymptotic class of the geometric Weyl chamber $f(\fa^+)$ (resp. $f(-\fa^+)$). 
Then the following map 
	\begin{align*}
\mathcal{H}^{(2)}	:\cW(X) & \longrightarrow  \mathcal{F}(X)\times \mathcal{F}(X)  \\
	f & \longmapsto   (f_+, f_- )
	\end{align*}
is $G-$equivariant and continuous.

Two asymptotic Weyl chambers $\xi, \eta \in \mathcal{F}(X)$ are in \emph{general position} or \emph{opposite}, if they are in the image $\mathcal{H}^{(2)}(\cW(X))$ i.e. if there exists a parametrized flat $f\in \mathcal{W}(X)$ such that
the geometric Weyl chamber $f_+$ (resp. $f_-$) is a representative of $\xi$ (resp. $\eta$).

We denote by $\mathcal{F}^{(2)}(X)$ the set of asymptotic Weyl chambers in general position. 
The product topology on the product space $\mathcal{F}(X) \times \mathcal{F}(X)$ induces a natural topology on $\mathcal{F}^{(2)}(X)$.

\begin{fait}[\S 3.2 \cite{thirion2009proprietes}]
The set $\mathcal{F}^{(2)}(X)$ identifies with the homogeneous space $G/AM$.
Furthermore, if we denote by $\eta_0$ (resp. $\check{\eta}_0$) the asymptotic class of the Weyl chamber $f_0(\fa^+)$ (resp. $f_0(-\fa^+)$), then
$$ G.(\eta_0,\check{\eta}_0) \simeq \mathcal{F}^{(2)}(X)\simeq G/AM .$$
\end{fait}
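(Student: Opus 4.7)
The strategy is the orbit-stabilizer principle applied to the diagonal $G$-action on $\mathcal{F}(X)\times \mathcal{F}(X)$: I would first show transitivity on $\mathcal{F}^{(2)}(X)$ with base point $(\eta_0,\check{\eta}_0)$, and then compute the stabilizer of this pair.

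Transitivity is immediate from the definition. By construction $\mathcal{F}^{(2)}(X)$ is the image of the $G$-equivariant continuous map $\mathcal{H}^{(2)}:\cW(X)\to \mathcal{F}(X)\times \mathcal{F}(X)$, and since $G$ acts transitively on $\cW(X)\simeq G/M$ this image is a single $G$-orbit. Evaluating at the base point, $\mathcal{H}^{(2)}(f_0)=(\eta_0,\check{\eta}_0)$, so $\mathcal{F}^{(2)}(X)=G\cdot(\eta_0,\check{\eta}_0)$.

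For the stabilizer I would combine the previous Fact, which gives $\mathrm{Stab}_G(\eta_0)=P=MAN$, with its analogue for $\check{\eta}_0$. Repeating the Cartan-norm/Bruhat argument of the previous proof but with $u\in\fa^{++}$ replaced by $-u$ (so that the contracted unipotent directions are now the negative roots, via equation~\eqref{equ_unipotents} applied on $N^-$) shows that $\mathrm{Stab}_G(\check{\eta}_0)=P^-:=MAN^-$; equivalently, one conjugates the previous Fact by a representative of the longest Weyl element, which exchanges $\fa^+$ with $-\fa^+$ and $P$ with $P^-$. The joint stabilizer is therefore $P\cap P^-$.

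The crux is the algebraic identity $P\cap P^-=MA$, and this is the only non-formal step. I would deduce it from the Bruhat decomposition: the big cell $N^-\cdot MA\cdot N\subset G$ is open and each of its elements factors uniquely as such a triple product. Any $g\in MAN\cap MAN^-$ admits the two decompositions $g=1\cdot (ma)\cdot n$ and $g=n^-\cdot (m'a')\cdot 1$, both sitting inside the big cell; uniqueness forces $n=1$ and $n^-=1$, so $g\in MA$. Combined with the orbit-stabilizer theorem for Lie group actions on Hausdorff spaces, this yields the $G$-equivariant homeomorphism $G/AM\simeq G\cdot(\eta_0,\check{\eta}_0)=\mathcal{F}^{(2)}(X)$; the upgrade from continuous bijection to homeomorphism is standard, since $AM$ is a closed subgroup of the Lie group $G$ and the orbit carries the subspace topology induced from $\mathcal{F}(X)\times \mathcal{F}(X)$. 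The main obstacle is thus purely the Bruhat intersection identity; once that is in hand, everything else reduces to invocation of the preceding Fact and the orbit-stabilizer formalism.
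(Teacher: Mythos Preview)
The paper does not give its own proof of this Fact; it is stated with a citation to Thirion and left without argument. Your proposal is correct and is the standard orbit--stabilizer route one finds in the cited reference: transitivity is forced because $\mathcal{F}^{(2)}(X)$ is by definition the image of the $G$-equivariant surjection $\mathcal{H}^{(2)}$ from the transitive $G$-space $\cW(X)$, and the stabilizer of $(\eta_0,\check{\eta}_0)$ is $P\cap P^-=MA$ by the big-cell uniqueness you invoke. There is nothing to compare against in the paper itself; your argument is exactly the one the citation points to.
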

%

The \emph{Hopf coordinates map} is defined by
	$$ \cH:\begin{array}{cl}
	\cW(X) & \longrightarrow  \ccF^{(2)}(X)  \times \fa \\
	f & \longmapsto   \Big(f_+, f_-;  \beta_{f_+}(f(0),o)\Big).
	\end{array}$$

We define the left $G-$action on the skew product $\ccF^{(2)}(X)  \times \fa$ as follows.
For any $g\in G$ and $(\xi,\eta ; v) \in \ccF^{(2)}(X)  \times \fa$, we set
$$g.(\xi,\eta ; v)=(g.\xi,g.\eta ; v+ \beta_{g.\xi}(g.o,o) ).$$

The right $\fa-$action defined for any $\alpha\in \fa$ and $(\xi,\eta ; v) \in \ccF^{(2)}(X)  \times \fa$ by 
$$\alpha \cdot (\xi,\eta ; v)= (\xi,\eta ; v+\alpha)$$
is called the right $\fa-$action by \emph{translation}.

Similarly, for any $\theta\in \fa_1^{++}$, we define the Weyl chamber flow $\phi^\theta$ on the skew product, for all $(\xi,\eta ; v) \in \ccF^{(2)}(X)  \times \fa$ and $t\in \R_+$,
$$\phi_t^\theta (\xi,\eta ; v)= (\xi, \eta; v+ \theta t) .$$ 

\begin{prop}[Proposition 8.54 \cite{thirion2007sous}]\label{coord_hopf}
The Hopf coordinates map is a $(G,\fa)-$equivariant homeomorphism in the sense that: 
\begin{itemize}
\item[(i)] The left-action of $G$ on $\mathcal{W}(X)$ identifies, via the Hopf coordinates map, with the left $G-$action on the skew product $ \ccF^{(2)}(X)  \times \fa$; 
\item[(ii)] The right-action of $\fa$ on $\mathcal{W}(X)$ identifies, via the Hopf coordinates map, with the right $\fa-$action by translation on the skew product $ \ccF^{(2)}(X)  \times \fa$.  
\end{itemize}
Furthermore, for any $\theta\in \fa_1^{++}$ and $t\in \R_+$, for all $f\in \mathcal{W}(X)$, we obtain 
$$\mathcal{H}(\phi_t^\theta(f))=\phi_t^\theta(\mathcal{H}(f)).$$
\end{prop}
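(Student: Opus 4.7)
The plan is to verify the three structural claims separately: the left $G$-equivariance (i), the right $\fa$-equivariance (ii), and the homeomorphism property; the flow identity will then follow as an immediate corollary of (ii), since $\phi_t^\theta$ is by definition the right action of $t\theta \in \fa$. For (i), the identities $(gf)_\pm = g\cdot f_\pm$ and $(gf)(0) = g\cdot f(0)$ are immediate from the definitions of the asymptotic class and a parametrized flat, leaving only the $\fa$-coordinate. Inserting $go$ as an intermediate basepoint via the Busemann cocycle,
\begin{equation*}
\beta_{gf_+}(gf(0), o) = \beta_{gf_+}(gf(0), go) + \beta_{gf_+}(go, o),
\end{equation*}
and invoking the $G$-invariance $\beta_{g\eta}(gx, gy) = \beta_\eta(x, y)$ (immediate from the $G$-invariance of $d_{\fa^+}$ and the definition of $\beta$) reduces the first summand to $\beta_{f_+}(f(0), o) = v$. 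The total is exactly the formula defining the left $G$-action on the skew product.

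For (ii), the equalities $(\alpha \cdot f)_\pm = f_\pm$ are clear because translating the parametrization does not alter the geometric Weyl chambers $f(\pm\fa^+)$ as subsets of $X$. On the $\fa$-coordinate, the cocycle relation gives $\beta_{f_+}(f(\alpha), o) = \beta_{f_+}(f(\alpha), f(0)) + v$, and the remaining term is a direct computation using $f(w) = g_f\exp(w) o$: for any $u \in \fa^{++}$ and $t$ large enough that $tu - \alpha \in \fa^+$, one has $d_{\fa^+}(f(tu), f(\alpha)) = \mu(\exp(tu-\alpha)) = tu - \alpha$ and $d_{\fa^+}(f(tu), f(0)) = tu$, so the Busemann limit stabilizes to an $\alpha$-dependent constant. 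Matching signs against \eqref{eq_Busemann_Iwasawa} and Fact \ref{fait_lox} (both of which anchor the convention via $\lambda(g) = \sigma(g, g^+)$) produces exactly the right $\fa$-translation $v \mapsto v + \alpha$.

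For the homeomorphism claim, continuity of $\cH$ is clear: the projection $f \mapsto (f_+, f_-)$ factors through the continuous quotient $G/M \to G/AM$ and the identification of $\ccF^{(2)}(X)$ with $G/AM$, while the $\fa$-coordinate rewrites via \eqref{eq_Busemann_Iwasawa} as $-\sigma(g_f, \eta_0)$, which is continuous. For bijectivity, given $(\xi, \eta; v)$, the opposition of $\xi$ and $\eta$ places $(\xi, \eta)$ in the $G$-orbit of $(\eta_0, \check{\eta}_0)$, whose stabilizer is $AM$; hence the fiber over $(\xi, \eta)$ in $\cW(X) = G/M$ is a single $A$-orbit, and by (ii) the $\fa$-coordinate restricts to an affine bijection on this orbit, giving a unique preimage. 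Continuity of the inverse then follows locally from continuous sections of the principal $AM$-bundle $G \to G/AM$, composed with the $\fa$-adjustment prescribed by (ii).

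The flow equivariance is then immediate, since $\phi_t^\theta f = (t\theta)\cdot f$ and $\phi_t^\theta \cH(f) = (t\theta) \cdot \cH(f)$, so (ii) applied to $\alpha = t\theta$ gives the claim. The main technical obstacle is the sign bookkeeping in (ii): the Busemann cocycle, the $\fa^+$-valued distance, and the right $\fa$-action each carry a sign choice that must be tracked together, and the cleanest sanity check is to specialize to $f = f_0$ and $\alpha \in \fa^{++}$, comparing the result against Fact \ref{fait_lox} which pins down the conventions through the identity $\lambda(g) = \sigma(g, g^+)$ on loxodromic elements.
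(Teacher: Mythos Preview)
The paper does not prove this proposition; it is quoted from Thirion's thesis (Proposition~8.54 in \cite{thirion2007sous}), so there is no argument here to compare against. Your outline is the standard verification and is structurally correct: (i) via the Busemann cocycle identity together with the $G$-invariance $\beta_{g\eta}(gx,gy)=\beta_\eta(x,y)$, (ii) by direct computation of $\beta_{f_+}(f(\alpha),f(0))$ on the flat, and the homeomorphism by identifying the fibre of $G/M\to G/AM\simeq \ccF^{(2)}(X)$ with a single $A$-orbit on which the $\fa$-coordinate is affine.

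One genuine caution on the point you yourself flag. With the paper's conventions exactly as written --- $\beta_\eta(x,y)=\lim_t\bigl(d_{\fa^+}(f_\eta(tu),x)-d_{\fa^+}(f_\eta(tu),y)\bigr)$ and Hopf $\fa$-coordinate $\beta_{f_+}(f(0),o)$ --- your computation in (ii) actually gives $\beta_{f_+}(f(\alpha),f(0))=(tu-\alpha)-tu=-\alpha$, hence $v\mapsto v-\alpha$, not $v\mapsto v+\alpha$. Your appeal to Fact~\ref{fait_lox} does not resolve this: that identity pins down the relation $\lambda(g)=\sigma(g,g^+)$ between the Jordan projection and the Iwasawa cocycle, which governs the \emph{left} $G$-action, not the sign of the Hopf coordinate under the \emph{right} $\fa$-action. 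The discrepancy is a convention artefact in the paper's transcription of Thirion's setup (and the same sign reappears when the $G$-action formula is invoked in the proof of Theorem~\ref{th-main}), not a flaw in your method; but you should not assert that the signs ``produce exactly'' the stated translation without either reversing the Busemann convention or taking the Hopf $\fa$-coordinate to be $\beta_{f_+}(o,f(0))$.
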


\section{Loxodromic elements}
We first study loxodromic elements in $GL(V)$ for $V$  a real vector space of finite dimension endowed with a Euclidean norm $\Vert .\Vert$.
Then we give some background on representations of semisimple Lie groups.
Finally, we study the dynamical properties of the representations of $G$ acting on the projective space of those representations.
\subsection{Proximal elements of $\mathrm{GL}(V)$}
Denote by $X=\mathbb{P}(V)$ the projective space of $V$.
We endow $X$ with the distance 
$$d(\R x,\R y)=\inf \lbrace \Vert v_x-v_y \Vert \;  \vert  \; \Vert v_x\Vert=\Vert v_y \Vert=1 ,\;  v_x\in \R x , \; v_y\in \R y  \rbrace .$$
For $g\in \mathrm{End}(V)$, denote by $\lambda_1(g)$ its \emph{spectral radius}.

\begin{definition}\label{definition_proximal}
	An element $g\in \mathrm{End}(V)\setminus \lbrace 0\rbrace$ is \emph{proximal} on $X$ if it has a unique eigenvalue $\alpha \in \mathbb{C}$ such that $\vert \alpha\vert=\lambda_1(g)$ and this eigenvalue is simple (therefore $\alpha$ is a real number).
	Denote by $V_+(g)$ the one dimensional eigenspace corresponding to $\alpha$ and $V_-(g)$ the supplementary $g$-invariant hyperplane. 
	In the projective space, denote by $x_+(g)=\mathbb{P}(V_+(g))$ (resp. $X_-(g)=\mathbb{P}(V_-(g))$) the \emph{attractive point} (resp. the \emph{repulsive hyperplane}). 
	\end{definition}

The open ball centered in $x\in X$ of radius $\varepsilon>0$ is denoted by $B(x,\varepsilon)$.  
For every subset $Y\subset X$, we denote by $\mathcal{V}_\varepsilon(Y)$ the open $\varepsilon-$neighbourhood of $Y$.
The following definition gives uniform control over the geometry of proximal elements (parametrized by $r$) and their contracting dynamics (parametrized by $\varepsilon$).

	\begin{definition}\label{defin_epsilon_prox_dim1}
	Let $0<\varepsilon\leq r$. A proximal element $g$ is $(r,\varepsilon)$-proximal if $d(x_+(g),X_-(g))\geq 2r$, $g$ maps $\mathcal{V}_\varepsilon(X_-(g))^c$ into the ball $B(x_+(g),\varepsilon)$ and its restriction to the subset $\mathcal{V}_\varepsilon(X_-(g))^c$ is an $\varepsilon$-Lipchitz map.
	\end{definition}
We give three remarks that follow from the definition.
	\begin{itemize}\label{remarque_prox}
	\item[1)] If an element is $(r,\varepsilon)$-proximal, then it is $(r',\varepsilon)$-proximal for $\varepsilon \leq r'\leq r$,
	\item[2)] If an element is $(r,\varepsilon)$-proximal, then it is $(r,\varepsilon')$-proximal for $r \geq \epsilon'\geq  \varepsilon$.
	\item[3)] If $g$ is is $(r,\varepsilon)$-proximal, then $g^n$ is also is $(r,\varepsilon)$-proximal for $n\geq 1$.
	\end{itemize}

The numbers $r$ and $\varepsilon$ depend on the metric of the projective space, which, in our case, depends on the choice of the norm on the finite dimensional vector space.
However, in \cite[Remark 2.3]{sert2016} Sert claims the following statement.
We provide a proof for completeness. 

	\begin{lemme}\label{fait_rem_Cagri} For every proximal transformation $g$, there exists $r>0$ and $n_0\in \mathbb{N}$ such that for all $n\geq n_0$ large enough, $g^n$ is $(r,\varepsilon_n)$-proximal with $\varepsilon_n \underset{n\rightarrow \infty}{\rightarrow} 0$. 
	\end{lemme}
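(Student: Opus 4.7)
The plan is to exploit the fact that $g^n$ has the same attractive line and repulsive hyperplane as $g$ for every $n \geq 1$, and that $g^n$ becomes arbitrarily contracting on the complement of $V_-(g)$ as $n \to \infty$. Concretely, let $\alpha$ be the simple dominant eigenvalue of $g$ and set $V_\pm := V_\pm(g)$, $x_+ := x_+(g)$, $X_- := X_-(g)$. Since $V_+$ and $V_-$ are $g$-invariant, every $g^n$ is proximal with the same pair of invariant subspaces, so $x_+(g^n) = x_+$ and $X_-(g^n) = X_-$. As $x_+ \notin X_-$, I can therefore fix once and for all $r > 0$ with $2r \leq d(x_+, X_-)$. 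Proximality forces $\rho := \lambda_1(g|_{V_-}) < |\alpha|$, so Gelfand's formula gives $\delta_n := \|g^n|_{V_-}\|/|\alpha|^n \to 0$ exponentially fast.

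Next I would translate the two remaining conditions of Definition~\ref{defin_epsilon_prox_dim1} into norm inequalities using the splitting $V = V_+ \oplus V_-$. Writing $v = v_+ + v_-$, the non-degenerate angle between $V_+$ and $V_-$ yields $d(\R v, X_-) \asymp \|v_+\|/\|v\|$. Since $g^n v = \alpha^n v_+ + g^n v_-$, normalization gives
\[ d(\R g^n v, x_+) \lesssim \frac{\|g^n v_-\|}{|\alpha|^n \|v_+\|} \leq \delta_n \, \frac{\|v_-\|}{\|v_+\|}. \]
Under $d(\R v, X_-) \geq \varepsilon$ this becomes $d(\R g^n v, x_+) \lesssim \delta_n / \varepsilon$, so the choice $\varepsilon_n := C \sqrt{\delta_n}$, with $C$ absorbing the implicit constants, simultaneously sends $\cV_{\varepsilon_n}(X_-)^c$ into $B(x_+, \varepsilon_n)$ and has $\varepsilon_n \to 0$.

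For the Lipschitz estimate I would switch to the affine chart of $\bP(V) \setminus \bP(V_-)$ about $x_+$: fix a unit $e_+ \in V_+$ and parametrize $u \in V_-$ via $u \mapsto \R(e_+ + u)$. In this chart $g^n$ is the honest linear contraction $u \mapsto (g|_{V_-}/\alpha)^n (u)$ of operator norm $\delta_n$. The source $\cV_{\varepsilon_n}(X_-)^c$ corresponds to $\|u\| \lesssim 1/\varepsilon_n$, on which the chart inverse has Lipschitz constant of order $1/\varepsilon_n$; its image lies in $\|u\| \lesssim \delta_n/\varepsilon_n \ll 1$, where the chart map is essentially an isometry. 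Composing the three factors yields a projective Lipschitz constant bounded by $\delta_n/\varepsilon_n \sim \sqrt{\delta_n} \sim \varepsilon_n$, and choosing $n_0$ so that $\varepsilon_n \leq r$ for all $n \geq n_0$ closes the argument.

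The Lipschitz step is the main obstacle, because the chart is badly distorted near $X_-$ (compressed by $\varepsilon_n$) but nearly isometric near $x_+$, and the two regimes must be reconciled in a single inequality. The saving grace is that the exponential decay of $\delta_n$ comfortably dominates the polynomial chart distortion, so the square-root scaling $\varepsilon_n \sim \sqrt{\delta_n}$ leaves enough slack to absorb both effects at once.
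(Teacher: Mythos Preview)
Your argument is sound and more self-contained than the paper's, but the Lipschitz step is organized quite differently. The paper never passes to an affine chart: it writes $g^n=k_na_nl_n$ in polar form and invokes a lemma of Breuillard--Gelander (Lemma~\ref{lemme_lipschitz}), which states directly that $g^n$ is $\bigl(a_n(2)/a_n(1)\bigr)\cdot r^{-2}$--Lipschitz on $\cV_r(l_n^{-1}H_0)^c$. The price of that shortcut is an additional convergence step, namely showing that the Cartan pair $(k_nx_0,\,l_n^{-1}H_0)$ tends to the eigendata $(x_+(g),X_-(g))$; the paper extracts this by comparing $g^n/a_n(1)$ with $g^n/\lambda_1(g)^n\to\pi_g$. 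Your eigenspace route sidesteps both the external lemma and that convergence argument, trading them for the chart bookkeeping you describe.

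One quantitative correction: on $\cV_{\varepsilon_n}(X_-)^c$ the chart inverse $\phi^{-1}$ is Lipschitz of order $1/\varepsilon_n^{2}$, not $1/\varepsilon_n$, because the radial direction near $X_-$ is compressed quadratically (take $h$ parallel to $u$ in $d\phi_u(h)$). The composed Lipschitz bound is therefore $\delta_n/\varepsilon_n^{2}$, and the balanced choice is $\varepsilon_n\asymp\delta_n^{1/3}$ rather than $\delta_n^{1/2}$. This is exactly the exponent hidden in the paper's use of Lemma~\ref{lemme_lipschitz} (it requires $4\lvert a_n(2)/a_n(1)\rvert/\varepsilon^2<\varepsilon$), and it does not affect the conclusion since $\delta_n\to 0$ exponentially.
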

Since $\mathrm{GL}(V)$ is endowed with a Euclidean norm, it admits a canonical basis $(e_j)_{1 \leq j \leq \dim (V)}$.
We set $x_0:= \mathbb{P}(e_1)$ and $H_0:= \mathbb{P}(\oplus_{j=2}^{\dim(V)} \R e_j)$.
Recall that $\mathrm{GL}(V)$ admits a polar decomposition i.e. for any $g\in \mathrm{GL}(V)$, there exists orthogonal endomorphisms $k_g,l_g\in \mathrm{O}(V)$ and a unique symmetric endomorphism $a_g$ of eigenvalues $(a_g(j))_{1\leq j \leq \dim (V)}$ with $a_g(1)\geq a_g(2) \geq ... \geq a_g(\dim (V))$ such that $g=k_ga_gl_g$.  
Let us introduce a key \cite[Lemma 3.4]{breuillard2003dense}, due to Breuillard and Gelander, which is needed to obtain the Lipschitz properties.	

\begin{lemme}[\cite{breuillard2003dense}]\label{lemme_lipschitz}
Let $r,\delta \in (0,1]$. Let $g\in \mathrm{GL}(V)$.
If $\big\vert\frac{ a_g(2) }{ a_g(1) }\big\vert \leq \delta$, then $g$ is $\frac{\delta}{r^2}-$Lipschitz on $\mathcal{V}_r(l_g^{-1}H_0)^c$.
\end{lemme}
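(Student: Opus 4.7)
The plan is to reduce to the diagonal case via the polar decomposition and then bound the Lipschitz constant using the exterior-square expression of the projective distance.

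First, I would write $g = k_g a_g l_g$ and exploit orthogonal invariance. Since $k_g, l_g \in \mathrm{O}(V)$ act isometrically on $(\mathbb{P}(V),d)$, and $l_g$ sends $\cV_r(l_g^{-1}H_0)^c$ isometrically onto $\cV_r(H_0)^c$, it suffices to show that the diagonal endomorphism $a := a_g = \mathrm{diag}(a_g(1), \ldots, a_g(\dim V))$ is $(\delta/r^2)$-Lipschitz on $\cV_r(H_0)^c$.

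Next I would reformulate the projective distance via the Plücker embedding: for non-zero $v, w \in V$,
\[
d(\mathbb{R}v, \mathbb{R}w) \;\asymp\; \frac{\|v \wedge w\|}{\|v\|\|w\|},
\]
with constants in $[1, \sqrt{2}]$ reflecting that the paper's metric satisfies $d = 2\sin(\theta/2)$ while $\|v \wedge w\|/(\|v\|\|w\|) = \sin\theta$. The key algebraic input is that the exterior square $\Lambda^2 a$ on $\Lambda^2 V$ has operator norm equal to the product of the two largest singular values of $a$, namely $a_g(1) a_g(2)$; hence
\[
\|a v \wedge a w\| = \|(\Lambda^2 a)(v \wedge w)\| \leq a_g(1)\, a_g(2)\, \|v \wedge w\|.
\]
For the denominator, a direct computation in coordinates for a unit vector $v = \sum_j v_j e_j$ gives $d(\mathbb{R}v, H_0) = \sqrt{2 - 2\sqrt{1 - v_1^2}}$, and the hypothesis $d(\mathbb{R}v, H_0) \geq r$ forces $|v_1| \geq r$ (with the normalization used in \cite{breuillard2003dense}). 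Since $e_1$ is the eigenvector of $a$ for the largest eigenvalue, one obtains $\|av\| \geq a_g(1)|v_1| \geq a_g(1)\, r$.

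Combining, for unit $v, w \in \cV_r(H_0)^c$:
\[
d(av, aw) \;\asymp\; \frac{\|av \wedge aw\|}{\|av\|\|aw\|} \;\leq\; \frac{a_g(1) a_g(2) \|v \wedge w\|}{a_g(1)^2\, r^2} \;=\; \frac{1}{r^2}\cdot \frac{a_g(2)}{a_g(1)}\, \|v \wedge w\| \;\leq\; \frac{\delta}{r^2}\, d(\mathbb{R}v, \mathbb{R}w).
\]
The principal obstacle is not the overall strategy, which follows the standard Plücker/singular-value approach, but the careful tracking of constants needed to arrive at exactly $\delta/r^2$ rather than a constant multiple thereof. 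This requires using the sharpest form of the implication $d(\mathbb{R}v, H_0) \geq r \Rightarrow |v_1| \geq r$ in tandem with the upper side of the equivalence $d \asymp \|v \wedge w\|/(\|v\|\|w\|)$, i.e.\ choosing the conventions so that both comparisons become equalities or non-expanding inequalities; this is precisely where the original argument in \cite[Lemma~3.4]{breuillard2003dense} is most delicate.
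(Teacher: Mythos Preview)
The paper does not supply its own proof of this lemma: it is stated with the attribution \cite{breuillard2003dense} and used as a black box in the subsequent proof of Lemma~\ref{fait_rem_Cagri}. So there is nothing in the paper to compare your argument against.

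Your approach is the standard one (and essentially the one in \cite{breuillard2003dense}): reduce to the diagonal part via polar decomposition, bound the numerator using $\|\Lambda^2 a\| = a_g(1)a_g(2)$, and bound $\|av\|$ from below via the first coordinate of $v$. One concrete inaccuracy: with the metric the paper fixes, $d(\R v,\R w)=2\sin(\theta/2)$, the implication ``$d(\R v,H_0)\geq r \Rightarrow |v_1|\geq r$'' is not literally true; a short computation gives $|v_1|\geq r\sqrt{1-r^2/4}$, and combined with the $\sqrt{2}$ loss in passing between $d$ and $\sin\theta$ you end up with $C\,\delta/r^2$ for an absolute constant $C$, not exactly $\delta/r^2$. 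You already flag this as the delicate point. In \cite{breuillard2003dense} the projective distance is taken to be $\sin\theta = \|v\wedge w\|/(\|v\|\|w\|)$, for which both comparisons become equalities and the constant comes out exactly; the paper's choice of metric differs by a bi-Lipschitz factor, which is harmless for every later use (only $C_{r,\varepsilon}\to 0$ as $\varepsilon\to 0$ matters), so the discrepancy is cosmetic.
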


	\begin{proof}[Proof of Lemma \ref{fait_rem_Cagri}]
Let $g\in \mathrm{GL}(V)$ be a proximal element.
Set $r:= \frac{1}{2}d(x_+(g),X_-(g))$. By proximality, $r$ is positive.
Let us prove that for all $0<\varepsilon\leq r$, there exists $n_0$ such that $g^n$ is $(r,\varepsilon)-$proximal for all $n\geq n_0$.

Denote by $\pi_g$ the projector of kernel $V_-(g)$ and of image $V_+(g)$.
Then 
$$\frac{g^n}{\lambda_1(g)^n}=\pi_g +  \frac{ g_{\vert V_-(g)}^n }{ \lambda_1(g)^n }.$$
By proximality, the highest eigenvalue of $g_{\vert V_-(g)}$ is strictly smaller than $\lambda_1(g)$. 
It follows immediately by the Spectral Radius Formula that 
$\frac{g^n}{\lambda_1(g)^n} \underset{n\rightarrow +\infty}{\longrightarrow} \pi_g.$
Hence for any $y\in X\setminus  X_-(g)$, uniformly on any compact subset of $X\setminus  X_-(g)$,
$$g^n.y \underset{n\rightarrow +\infty}{\longrightarrow} x_+(g).$$ 

It remains to show the Lipschitz properties of $g^n$, for $n$ big enough. 
For all $n\in \mathbb{N}$, we denote by $k_n,l_n$ (resp. $a_n$) the orthogonal (resp. symmetric) components of $g^n$ so that $g_n=k_n a_n l_n$. 
We also set $x_n:=k_n x_0$ and $H_n:= l_n^{-1}H_0$.

For any $n\geq 1$, denote by $p_{x_n,H_n}$ the endomorphism of norm 1 such that $\mathbb{P}(\mathrm{im} (p_{x_n,H_n}))=x_n$ and $\mathbb{P}(\ker(p_{x_n,H_n}))=H_n$.
Then by polar decomposition, 
$$\frac{g^n}{a_n(1)}=p_{x_n,H_n} + O\bigg( \frac{ a_n(2) }{ a_n(1) }\bigg).$$

By the Spectral Radius Formula, 
$\big\vert \frac{ a_n(2) }{ a_n(1) }\big\vert^{\frac{1}{n}} \underset{n\rightarrow \infty}{\longrightarrow} \frac{\lambda_1(g_{\vert V_-(g)})}{\lambda_1(g)}  <  1.$ 
Hence
$$\underset{n\rightarrow \infty}{\lim} \frac{ a_n(2) }{ a_n(1) } = 0.$$

Let $(x,H)$ be an accumulating point of the sequence $(x_n,Y_n)_{n\geq 1}$.
Then there is a converging subsequence $x_{\varphi(n)},Y_{\varphi(n)}\underset{n\rightarrow +\infty}{\longrightarrow} x,Y$.
Denote by $p_{x,Y}$ the endomorphism of norm 1 such that $\mathbb{P}(\mathrm{im} (p_{x,Y}))=x$ and $\mathbb{P}(\ker(p_{x,Y}))=Y$.
Then,  
$$\frac{g^{\varphi(n)}}{a_{\varphi(n)}(1)} \underset{n\rightarrow +\infty}{\longrightarrow} p_{x,Y}.$$
It allows us to deduce in particular, that for any $y\in X\setminus \lbrace H, X_-(g) \rbrace$,
$$g^{\varphi(n)}.y \underset{n\rightarrow +\infty}{\longrightarrow} x.$$ 
However, by proximality of $g$ and uniqueness of the limit, we obtain that $x=x_+(g)$.  

Similarly, by duality, we obtain that $Y=X_-(g)$. Hence $(x_n,Y_n)_{n\geq 1}$ converges towards $(x_+(g),X_-(g))$. 

Fix $0<\varepsilon\leq r$. 
Then for $n$ large enough, the inclusion $\mathcal{V}_\varepsilon(X_-(g))\supset \mathcal{V}_{\frac{\varepsilon}{2}}(H_n)$ holds.
By Lemma \ref{lemme_lipschitz}, the restriction of $g^n$ to $\mathcal{V}_\varepsilon(X_-(g))^c \subset \mathcal{V}_{\frac{\varepsilon}{2}}(H_n)^c$ is then a $\big\vert \frac{ a_n(2) }{ a_n(1) }\big\vert \frac{4}{\varepsilon^2}-$Lipschitz map.
Finally, for $n$ large enough so that $\big\vert \frac{ a_n(2) }{ a_n(1) }\big\vert \frac{4}{\varepsilon^2}<\varepsilon$, the restriction of $g^n$ to  $\mathcal{V}_\varepsilon(X_-(g))^c$ is $\varepsilon-$Lipschitz.  
	\end{proof}	

The following proximality criterion is due to Tits \cite{Tits1971} and one can find the statement under this form in \cite{benoist2000proprietes}.
	\begin{lemme}\label{critère_de_proximalité_Tits}
	Fix $0<\varepsilon\leq r$. 
	Let $x\in \mathbb{P}(V)$ and a hyperplane $Y\subset \mathbb{P}(V)$ such that $d(x,Y)\geq 6r$.
	Let $g\in \mathrm{GL}(V)$.
	If  
		\begin{itemize}
		\item[(i)] $g\mathcal{V}_{\varepsilon}(Y)^c\subset B(x,\varepsilon)$,
		\item[(ii)] $g$ restricted to $\mathcal{V}_{\varepsilon}(Y)^c$ is $\varepsilon-$Lipschitz,
		\end{itemize}  
	then $g$ is $(2r,2\varepsilon)-$proximal.
	Furthermore, the attractive point $x_+(g)$ is in $B(x,\varepsilon)$ and the repulsive hyperplane $X_-(g)$ in a $\varepsilon-$neighbourhood of $Y$.
	\end{lemme}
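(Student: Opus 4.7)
The plan splits into three parts: produce the attractive fixed point by a contraction argument; extract the full spectral structure of $g$ from the dynamics; finally verify the $(2r,2\varepsilon)$--proximality constants.

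First, I would use the gap $d(x,Y)\geq 6r\geq 6\varepsilon$ to observe that $B(x,\varepsilon)\subset \mathcal{V}_\varepsilon(Y)^c$, since any point of the ball is at distance at least $5\varepsilon$ from $Y$. Hypothesis $(i)$ then reads $g(B(x,\varepsilon))\subset B(x,\varepsilon)$, and $(ii)$ makes $g$ an $\varepsilon$--Lipschitz self-map of the compact set $\overline{B(x,\varepsilon)}$, with $\varepsilon<1$ (the bounded diameter of $\bP(V)$ forces $r<1$). The Banach fixed point theorem yields a unique fixed point $x_+\in B(x,\varepsilon)$, and lifting to a unit vector $v_+$ produces a real eigenvalue $\alpha$ with $g v_+=\alpha v_+$. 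Moreover, any $y\in \mathcal{V}_\varepsilon(Y)^c$ satisfies $g^n y\to x_+$, since after one step the orbit enters $B(x,\varepsilon)$ and the contraction drives it to the unique fixed point.

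The delicate step is to show that $\alpha$ is algebraically simple and is the unique eigenvalue of maximum modulus. For any other generalized eigenspace and any $w$ in it, the projective point $\bP(\R(v_+ +\delta w))$ lies in $B(x,\varepsilon)\subset \mathcal{V}_\varepsilon(Y)^c$ for sufficiently small $\delta$, so its $g$--orbit must converge to $x_+$. Diagonalising in Jordan coordinates and computing $g^n(v_+ +\delta w)$ then forces $|\beta|<|\alpha|$ for every other eigenvalue $\beta$. Two additional cases must be excluded: a Jordan block on $\alpha$ would give an invariant projective line through $x_+$ on which a direct computation shows $g$ has differential exactly $1$ at $x_+$, contradicting $\varepsilon$--Lipschitz with $\varepsilon<1$; a higher-dimensional $\alpha$--eigenspace would make $g$ act as the identity on a projective line meeting a neighbourhood of $x_+$, again incompatible. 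This establishes proximality and yields the decomposition $V=\R v_+\oplus V_-(g)$, with $x_+(g)=x_+\in B(x,\varepsilon)$ and $X_-(g)=\bP(V_-(g))$.

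To locate $X_-(g)$, I would argue by closed invariance: if some $w\in V_-(g)$ satisfied $\bP(\R w)\in \mathcal{V}_\varepsilon(Y)^c$, then the orbit $g^n\bP(\R w)$ would stay in the closed $g$--invariant hyperplane $X_-(g)$ yet converge to $x_+\notin X_-(g)$, impossible. Hence $X_-(g)\subset \mathcal{V}_\varepsilon(Y)$. The $(2r,2\varepsilon)$--proximality then follows by bookkeeping. The key geometric fact is that for two projective hyperplanes the inclusion $H_1\subset\mathcal{V}_\varepsilon(H_2)$ is symmetric in $H_1,H_2$ (both amount to $\sin$ of their angle being $\leq \varepsilon$), hence $Y\subset \mathcal{V}_\varepsilon(X_-(g))$ and therefore $\mathcal{V}_\varepsilon(Y)\subset \mathcal{V}_{2\varepsilon}(X_-(g))$. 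Thus $\mathcal{V}_{2\varepsilon}(X_-(g))^c\subset \mathcal{V}_\varepsilon(Y)^c$, and applying $(i)$ and $(ii)$ on the right-hand side gives $g(\mathcal{V}_{2\varepsilon}(X_-(g))^c)\subset B(x,\varepsilon)\subset B(x_+(g),2\varepsilon)$ and the Lipschitz constant $\varepsilon\leq 2\varepsilon$, while the triangle inequality gives $d(x_+(g),X_-(g))\geq d(x,Y)-2\varepsilon\geq 4r=2(2r)$. The main obstacle is the spectral analysis of the third paragraph: it is precisely there that the hypothesis $\varepsilon<1$, rather than just eventual contraction, must be used to rule out repeated top eigenvalues and Jordan blocks at the top.
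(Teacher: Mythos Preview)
The paper does not prove this lemma: it is stated as Tits' proximality criterion and attributed to \cite{Tits1971} in the form given in \cite{benoist2000proprietes}, with no argument supplied. There is therefore no in-paper proof to compare your proposal against.

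Your plan is essentially the standard argument and is sound. Two small remarks. First, the claim $\varepsilon<1$ is justified, since $6r\leq \mathrm{diam}\,\bP(V)=\sqrt{2}$ forces $\varepsilon\leq r<1/4$; this is what makes the Banach contraction and the Jordan-block exclusion work. Second, the symmetry assertion for hyperplane neighbourhoods, namely that $H_1\subset\mathcal{V}_\varepsilon(H_2)$ implies $H_2\subset\mathcal{V}_\varepsilon(H_1)$, is correct but deserves one line: writing $H_i=\bP(n_i^\perp)$ with unit $n_i$, one computes that $\max_{[v]\in H_1}d([v],H_2)$ and $\max_{[w]\in H_2}d([w],H_1)$ both depend only on the angle between $n_1$ and $n_2$, hence coincide. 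With that in hand, your bookkeeping for the $(2r,2\varepsilon)$ constants goes through. The spectral step is indeed the delicate one; your treatment of a possible Jordan block at the top eigenvalue via the derivative of the projective action at $x_+$ (which equals $1$ in that case, contradicting the $\varepsilon$-Lipschitz bound) is the clean way to close it.
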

\begin{corollaire} \label{corollaire_produit_prox}
Fix $0<\varepsilon\leq r$. 
Let $g\in \mathrm{GL}(V)$ be a $(r,\varepsilon/2)-$proximal element so that $d(x_+(g),X_-(g))\geq 7r$.

Then for any $h\in \mathrm{GL}(V)$ so that $\Vert h-id_V\Vert\leq \varepsilon/2$, the product $gh$ is $(2r,2\varepsilon)-$proximal, with $x_+(gh)\in B(x_+(g),\varepsilon)$.
\end{corollaire}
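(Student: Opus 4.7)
The plan is to apply the Tits proximality criterion (Lemma \ref{critère_de_proximalité_Tits}) to the product $gh$, taking as candidate attractive point $x := x_+(g)$ and as candidate repulsive hyperplane $Y := X_-(g)$. The hypothesis $d(x_+(g), X_-(g)) \geq 7r \geq 6r$ supplies the separation demanded by that lemma, so matters reduce to verifying, at scale $\varepsilon$, that $gh\,\mathcal{V}_\varepsilon(Y)^c \subset B(x, \varepsilon)$ and that $gh$ restricted to $\mathcal{V}_\varepsilon(Y)^c$ is $\varepsilon$-Lipschitz. Once these are in place, Lemma \ref{critère_de_proximalité_Tits} yields $(2r, 2\varepsilon)$-proximality of $gh$ together with the localization $x_+(gh) \in B(x_+(g), \varepsilon)$, which is the full content of the corollary.

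The bridge from the operator-norm hypothesis $\|h - \mathrm{id}_V\| \leq \varepsilon/2$ to these projective geometric conditions relies on two elementary estimates that I would record first. For any unit $v \in V$, the inequality $\|hv - v\| \leq \varepsilon/2$ together with $|\|hv\| - 1| \leq \varepsilon/2$ yields, after normalization of $hv$, a projective displacement bound of the form $d(h[v], [v]) \leq \varepsilon/2$ (up to a universal constant close to $1$). A parallel computation shows that the induced map on $\mathbb{P}(V)$ is bi-Lipschitz with constant $1 + O(\varepsilon)$, hence bounded by $2$ under the standing assumption $\varepsilon \leq r$.

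With these in hand, the first verification is immediate: for any $y \in \mathcal{V}_\varepsilon(Y)^c$ one has $d(h(y), Y) \geq d(y, Y) - d(h(y), y) > \varepsilon - \varepsilon/2 = \varepsilon/2$, so $h(y) \in \mathcal{V}_{\varepsilon/2}(Y)^c$; the $(r, \varepsilon/2)$-proximality of $g$ then places $g(h(y))$ in $B(x_+(g), \varepsilon/2) \subset B(x, \varepsilon)$. The Lipschitz verification is analogous: the restriction of $g$ to $\mathcal{V}_{\varepsilon/2}(Y)^c \supset h(\mathcal{V}_\varepsilon(Y)^c)$ is $(\varepsilon/2)$-Lipschitz by $(r, \varepsilon/2)$-proximality, and composing with the projective action of $h$ (Lipschitz constant $\leq 2$) yields a Lipschitz constant at most $\varepsilon$ for $gh$ on $\mathcal{V}_\varepsilon(Y)^c$.

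The main technical obstacle is precisely the quantitative passage from the operator-norm closeness of $h$ to the identity to these projective displacement and bi-Lipschitz bounds: the margin between the hypothesis parameter $\varepsilon/2$ and the Tits scale $\varepsilon$ is only a factor of two, so the elementary chord-distance computations in the projective metric must be kept sharp. Beyond this bookkeeping, the corollary is a direct invocation of Lemma \ref{critère_de_proximalité_Tits}.
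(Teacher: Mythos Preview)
Your proposal is correct and follows essentially the same strategy as the paper: verify the hypotheses of the Tits proximality criterion (Lemma~\ref{critère_de_proximalité_Tits}) for $gh$, using the $(r,\varepsilon/2)$-proximality of $g$ together with the fact that $h$, being operator-norm close to $\mathrm{id}_V$, moves projective points by at most $\varepsilon/2$. The only cosmetic difference is that the paper takes the candidate hyperplane to be $Y=h^{-1}X_-(g)$ (and then uses $d(x_+(g),h^{-1}X_-(g))\geq 7r-\varepsilon\geq 6r$, which is where the $7r$ hypothesis is spent), whereas you take $Y=X_-(g)$ directly; both choices lead to the same conclusion via the same estimates, and the paper is equally informal about the precise projective displacement bound you flag as the technical obstacle.
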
	
\begin{proof}
Consider a $(r,\varepsilon/2)-$proximal element $g$ and $h\in \mathrm{GL}(V)$ as in the hypothesis.

Remark that $gh$ maps $h^{-1}\mathcal{V}_{\varepsilon/2}(X_-(g))^c$ towards the open ball $B(x_+(g),\varepsilon/2)$.
Furthermore, by proximality of $g$, the restriction of $gh$ to $h^{-1}\mathcal{V}_{\varepsilon/2}(X_-(g))^c$ is $\varepsilon/2-$Lipschitz.

Since $h$ is close to $id_V$, then $\mathcal{V}_{\varepsilon}(h^{-1}X_-(g))^c \subset h^{-1}\mathcal{V}_{\varepsilon/2}(X_-(g))^c$. 
Hence $gh$ restricted to $\mathcal{V}_{\varepsilon}(h^{-1}X_-(g))^c$ is $\varepsilon-$Lipschitz of image in the open ball $B(x_+(g),\varepsilon)$. 
Furthermore, $d(x_+(g),h^{-1}X_-(g))\geq d(x_+(g),X_-(g))-\varepsilon > 7r-\varepsilon \geq 6r.$

Finally, by Lemma \ref{critère_de_proximalité_Tits}, we deduce that $gh$ is $(2r,2\varepsilon)-$proximal, with
$x_+(gh)\in B(x_+(g),\varepsilon)$.
\end{proof}
	
For all proximal elements $g, h$ of $\mathrm{End}(V)$ such that $x_+(h)\notin X_-(g)$, 
we consider two unit eigenvectors $v_+(h)\in x_+(h)$ and $v_+(g)\in x_+(g)$ 
and denote by $\alpha(g,h)$ the unique real number such that 
$v_+(h) - \alpha(g,h) v_+(g) \in V_-(g)$. 
A priori, $\alpha(g,h)$ depends on the choice of the unit vectors.
However, its absolute value does not.

Given $g_1,... g_l$ of $End(V)$, set $g_0=g_l$ 
and assume $x_+(g_{i-1})\notin X_-(g_{i})$ for all $1\leq i\leq l$. 
We set 
$$\nu_1(g_l,...,g_1) =\sum_{1\leq j\leq l}\log  |\alpha (g_{j} ,g_{j-1})|.$$
This product does not depends on the choices of unit eigenvectors for $g_j$.

The following proposition explains how to control the spectral radius $\lambda_1(\g)$ when $\g$ is a product of $(r,\epsilon)$-proximal elements.

\begin{prop}[\cite{benoist2000proprietes}]\label{prop - lemme 1.4 de Benoist ii } For all $0<\epsilon \leq r$, there exist positive constants $C_{r,\epsilon}$ such that for all $r>0$,  
	$\lim_{\epsilon\tv 0} C_{r,\epsilon} =0$ and such that the following holds.
	If $\g_1, ... \g_l$ are $(r,\varepsilon)$-proximal elements, such that $d(x_+(\g_{i-1}),X_-(\g_i))\geq 6r$ for all $1\leq i\leq l$ with $\g_0=\g_{l}$,
	then for all $n_1,...,n_l\geq 1$,
	$$ \left| \log \big( \lambda_1(\g_l^{n_l}...\g_1^{n_1}) \big) -\sum_{i=1}^l n_i \log \big(\lambda_1(\g_i)\big)  - \nu_1(\g_l,...,\g_1) \right| \leq lC_{r,\epsilon}.$$
	Furthermore, the map $\g_l^{n_l}...\g_1^{n_1}$ is $(2r,2\varepsilon)-$proximal with 
	$x_+(\g_l^{n_l}...\g_1^{n_1})\in B(x_+(\g_l),\varepsilon)$ and $X_-(\g_l^{n_l}...\g_1^{n_1})\subset \mathcal{V}_{\varepsilon}(X_-(\g_1))$.
	\end{prop}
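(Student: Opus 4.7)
The plan is to split the proof into two pieces: a geometric part proving that the product $P := \g_l^{n_l}\cdots \g_1^{n_1}$ is $(2r,2\varepsilon)$-proximal with the stated position of attractor and repeller, and a spectral part extracting $\log \lambda_1(P)$ from a direct computation. For the geometric part, I would proceed by induction on $l$, iterating the Tits proximality criterion (Lemma \ref{critère_de_proximalité_Tits}). The separation hypothesis $d(x_+(\g_{i-1}),X_-(\g_i))\geq 6r$ is designed precisely so that the image of $\mathcal{V}_\varepsilon(X_-(\g_1))^c$ under $\g_{i-1}^{n_{i-1}}\cdots \g_1^{n_1}$, which lies in $B(x_+(\g_{i-1}),\varepsilon)$, still sits outside $\mathcal{V}_\varepsilon(X_-(\g_i))$; hence $\g_i^{n_i}$ may be applied while preserving the $\varepsilon$-Lipschitz and contracting properties, compositions of $\varepsilon$-Lipschitz maps remaining $\varepsilon$-Lipschitz. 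A single application of Tits' criterion to the whole composition then yields $(2r,2\varepsilon)$-proximality with $x_+(P)\in B(x_+(\g_l),\varepsilon)$ and $X_-(P)\subset \mathcal{V}_\varepsilon(X_-(\g_1))$.

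For the spectral estimate, the strategy is to evaluate $P$ on a unit vector $v_+(\g_l)\in x_+(\g_l)$. Since $P$ is proximal with $x_+(P)$ close to $x_+(\g_l)$, the image $Pv_+(\g_l)$ has norm equal to $|\lambda_1(P)|$ up to a factor $1+O(\varepsilon)$. On the other hand, unwinding the product from the right and decomposing at each stage along the splitting $V_+(\g_i)\oplus V_-(\g_i)$, one has
$$\g_1^{n_1} v_+(\g_l) \;=\; \lambda_1(\g_1)^{n_1}\bigl(\alpha(\g_1,\g_l)\,v_+(\g_1) + r_1\bigr),$$
with $\|r_1\|=O(\varepsilon)$ by the contracting dynamics of $\g_1^{n_1}$ on $V_-(\g_1)$; the next factor $\g_2^{n_2}$ is applied to a vector whose direction lies in $B(x_+(\g_1),\varepsilon)\subset \mathcal{V}_\varepsilon(X_-(\g_2))^c$, and the same procedure repeats. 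Iterating through all $l$ factors produces
$$P v_+(\g_l) \;=\; \Bigl(\prod_{i=1}^l \lambda_1(\g_i)^{n_i}\Bigr)\Bigl(\prod_{j=1}^l \alpha(\g_j,\g_{j-1})\Bigr)\,v_+(\g_l) + E,$$
where $E$ has norm bounded by $l$ times a constant $C'_{r,\varepsilon}$ with $C'_{r,\varepsilon}\to 0$ as $\varepsilon\to 0$, times the modulus of the main scalar factor. Taking norms and then logarithms gives the announced inequality.

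The main obstacle will be ensuring that the accumulated error scales \emph{linearly} in $l$ rather than multiplicatively. A naive composition of $l$ approximately rank-one maps, each carrying a multiplicative error $1+c_\varepsilon$, would produce a disastrous $(1+c_\varepsilon)^l$. The rescue is that once the intermediate direction produced at stage $i$ lies inside $B(x_+(\g_i),\varepsilon)$, which is well separated from $X_-(\g_{i+1})$ by $6r-\varepsilon$, the next factor $\g_{i+1}^{n_{i+1}}$ coincides with the rank-one operator $\lambda_1(\g_{i+1})^{n_{i+1}}\pi_{\g_{i+1}}$ up to an additive directional error of size $O(\varepsilon)$. Each such error contributes only a bounded shift once we pass to logarithms, and summing over $i$ yields the $lC_{r,\varepsilon}$ bound. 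The independence of $C_{r,\varepsilon}$ from the individual exponents $n_i$ relies on the third remark after Definition \ref{defin_epsilon_prox_dim1}: $(r,\varepsilon)$-proximality is preserved by powers, so the geometric contraction constants do not degrade, and only improve, as the $n_i$ grow.
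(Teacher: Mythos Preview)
Your geometric argument---induction on $l$ to show that the partial product restricted to $\mathcal{V}_\varepsilon(X_-(\g_1))^c$ is $\varepsilon$-Lipschitz with image in $B(x_+(\g_i),\varepsilon)$, followed by a single application of Lemma~\ref{critère_de_proximalité_Tits}---is exactly what the paper does.

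For the spectral estimate, however, the paper does not prove anything: it simply observes that taking the logarithm in Benoist's original \cite[Lemma 1.4]{benoist2000proprietes} yields the inequality, and restricts its own argument to the proximality and localisation claims. Your sketch therefore goes beyond what the paper writes out. The approach you outline (evaluate $P$ on $v_+(\g_l)$, peel off one factor at a time via the splitting $V_+(\g_i)\oplus V_-(\g_i)$, and argue that the additive errors stay linear in $l$ after passing to logarithms) is indeed the mechanism behind Benoist's lemma, and the obstacle you flag---avoiding a multiplicative $(1+c_\varepsilon)^l$ blow-up---is the genuine crux. So your proposal is correct; it just supplies more than the paper itself does for this statement.
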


	\begin{proof}
	Taking the logarithm in Benoist's \cite[Lemma 1.4]{benoist2000proprietes} gives us the first part of the statement (the estimates).
	We only give a proof of the proximality and the localisation of the attractive points and repulsive hyperplane. 
	
	Let $n_1,...,n_l\geq 1$ and assume that $0<\epsilon \leq r$ and $\epsilon<1$.
	Let us prove that $g_n:=\g_l^{n_l}...\g_1^{n_1}$ verifies the assumptions (i) (ii) of the proximality criterion Lemma \ref{critère_de_proximalité_Tits}. 
	More precisely, we prove by induction on $l$ that $g_n$ restricted to $\mathcal{V}_{\varepsilon}(X_-(\g_1))^c$ is $\varepsilon-$Lipschitz and $g_n \mathcal{V}_{\varepsilon}(X_-(\g_1))^c\subset B(x_+(\g_l),\varepsilon)$. 
	
	By $(r,\varepsilon)-$proximality of $\g_1^{n_1}$, the restriction of $\g_1^{n_1}$ to $\mathcal{V}_{\varepsilon}(\g_1)^c$ is $\varepsilon-$Lipschitz and $\g_1^{n_1} \mathcal{V}_{\varepsilon}(\g_1)^c\subset B(x_+(\g_1),\varepsilon)$.
	
	Assume that for some $1\leq i\leq l$ that $\g_i^{n_i}...\g_1^{n_1}$ restricted to $\mathcal{V}_{\varepsilon}(X_-(\g_1))^c$ is $\varepsilon-$Lipschitz and $\g_i^{n_i}...\g_1^{n_1} \mathcal{V}_{\varepsilon}(X_-(\g_1))^c\subset B(x_+(\g_i),\varepsilon)$.
	Since $d(x_+(\g_{i}),X_-(\g_{i+1}))\geq 6r$ and $0<\epsilon \leq r$ we obtain $B(x_+(\g_i),\varepsilon)\subset \mathcal{V}_{\varepsilon}(X_-(\g_{i+1}))^c$. 
	Then using $(r,\varepsilon)-$proximality of $\g_{i+1}$, its restriction to $B(x_+(\g_i),\varepsilon)$ is $\varepsilon-$Lipschitz and $\g_{i+1}^{n_{i+1}}B(x_+(\g_i),\varepsilon) \subset B(x_+(\g_{i+1}),\varepsilon)$. 
	Hence by induction hypothesis and using $\epsilon<1$, the map $\g_{i+1}^{n_{i+1}}...\g_1^{n_1}$ restricted to $\mathcal{V}_{\varepsilon}(X_-(\g_1))^c$ is $\varepsilon-$Lipschitz and $\g_{i+1}^{n_{i+1}}...\g_1^{n_1} \mathcal{V}_{\varepsilon}(X_-(\g_1))^c\subset B(x_+(\g_{i+1}),\varepsilon)$. 
	
	We conclude the proof.
	By assumption, $d(x_+(\g_{l}),X_-(\g_1))\geq 6r$. Finally, by Lemma \ref{critère_de_proximalité_Tits} we deduce $(2r,2\varepsilon)-$proximality of $g_n$ with $x_+(g_n)\in B(x_+(\g_l),\varepsilon)$ and $X_-(g_n)\subset \mathcal{V}_{\varepsilon}(X_-(\g_1))$.
	\end{proof}
	
The previous proposition motivates the next definition.
	\begin{definition}\label{defin_epsilon_schottky_dim1}
	Let $0<\varepsilon\leq r$. A semigroup $\G \subset \mathrm{GL}(V)$ is \emph{strongly $(r,\varepsilon)$-Schottky} if
	\begin{itemize}
	\item[(i)] every $h\in \G$ is $(r,\varepsilon)-$proximal, 
	\item[(ii)] $d(x_+(h),X_-(h'))\geq 6r$ for all $h,h'\in \G$. 
	\end{itemize} 
	We also write that $\G$ is a \emph{strong $(r,\varepsilon)$-Schottky semigroup}.
	\end{definition}

\subsection{Representations of a semisimple Lie group $G$}

Let $(V,\rho)$ be a representation of $G$ in a real vector space of finite dimension. 
For every character $\chi$ of $\fa$, denote the associated eigenspace by $V_\chi:= \lbrace v\in V \; \vert \; \forall a\in \fa, \; \rho(a)v=\chi(a)v   \rbrace$.
The set of \emph{restricted weights} of $V$ is the set $\Sigma(\rho):=\lbrace \chi \vert V_\chi \neq 0 \rbrace$.
Simultaneous diagonalization leads to the decomposition $V=\underset{\chi \in \Sigma(\rho)}{\oplus}V_\chi $. 
The set of weights is partially ordered as follows
$$\big( \chi_1\leq \chi_2 \big) \Leftrightarrow \big( \forall a\in A^+, \quad  \chi_1(a)\leq \chi_2(a) \big).$$
Whenever $\rho$ is irreducible, the set $\Sigma(\rho)$ has a highest element $\chi_{max}$ which is the \emph{highest restricted weight} of $V$.
Denote by $V_\rho$ the eigenspace of the highest restricted weight, and by $Y_\rho$ the $\fa-$invariant supplementary subspace of $V_\rho$ i.e. $Y_\rho:=\ker(V_{\chi_{max}}^*)= \underset{\chi \in \Sigma(\rho)\setminus \lbrace \chi_{max}\rbrace}{\oplus}V_\chi $.

The irreducible representation $\rho$ is \emph{proximal} when $\dim (V_{\chi_{max}})=1$.
The following Lemma can be found in \cite[Lemma 5.32]{BenoistQuint}. It is due to Tits \cite{Tits1971}.

Denote by $\Pi \subset \Sigma^+$ the subset of \emph{simple roots} of the set of positive roots for the adjoint representation of $G$.

\begin{lemme}[\cite{Tits1971}]\label{lemme_tits}
	For every simple root $\alpha \in \Pi$, there exists a proximal irreducible algebraic representation $(\rho_\alpha, V_\alpha)$ of $G$ whose highest weight $\chi_{max,\alpha}$ is orthogonal to $\beta$ for every simple root $\beta \neq \alpha$. 
	
	These weights $(\chi_{max,\alpha})_{\alpha\in \Pi}$ form a basis of the dual space $\fa^*$.
	
	Moreover, the map 
	\begin{align*}
	\ccF(X) &\overset{y}{\longrightarrow} \prod_{\alpha\in \Pi} \mathbb{P}(V_\alpha) \\
	\eta:=k_\eta \eta_0 & \longmapsto \big( y_\alpha(\eta):= \rho_\alpha (k_\eta) V_{\rho_\alpha} \big)_{\alpha\in \Pi}
	\end{align*}
	is an embedding of the set of asymptotic Weyl chambers in this product of projective spaces.
	\end{lemme}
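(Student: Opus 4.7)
The plan is to establish the three assertions in turn: existence of the proximal representations $(\rho_\alpha, V_\alpha)$, the basis property of their highest weights, and the embedding property of the map $y$.

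For the existence, my approach would be to invoke Tits' construction of proximal fundamental representations from \cite{Tits1971}, as used in \cite{BenoistQuint}. The guiding idea is the following. After complexification, for each simple restricted root $\alpha \in \Pi$ one has a distinguished dominant weight $\omega_\alpha \in \fa^*$ characterized by $\pscal{\omega_\alpha}{\beta}=0$ for every $\beta \in \Pi \setminus \{\alpha\}$, and by the highest-weight theory for $\fg_\C$ there exists an irreducible complex representation with highest weight a sufficiently large positive integer multiple $N\omega_\alpha$ (large enough to ensure integrality with respect to the algebraic group, and to guarantee that the representation can be descended to a real representation of $G$, perhaps after pairing with its Galois conjugate). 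The highest weight space is one-dimensional over $\C$; this simplicity is preserved under descent, so the resulting real representation $(\rho_\alpha, V_\alpha)$ is irreducible and proximal, with highest restricted weight $\chi_{\max,\alpha}$ a positive multiple of $\omega_\alpha$, hence orthogonal to every $\beta \in \Pi \setminus \{\alpha\}$.

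For the basis property, the orthogonality relations force each $\chi_{\max,\alpha}$ to lie in the one-dimensional subspace of $\fa^*$ annihilated by all $\beta \neq \alpha$; as $\Pi$ itself is a basis of $\fa^*$, the $|\Pi|$ linear forms $(\chi_{\max,\alpha})_{\alpha \in \Pi}$ are automatically linearly independent, and since $|\Pi| = \dim \fa^*$ they form a basis. For the embedding, I would argue in two steps. Well-definedness: the only ambiguity in the choice of $k_\eta \in K$ with $\eta = k_\eta \eta_0$ is right-multiplication by $M$, and since $M = Z_K(A)$ centralizes $A$, the operator $\rho_\alpha(m)$ commutes with all $\rho_\alpha(\exp \fa)$ and therefore preserves each restricted weight space of $V_\alpha$; in particular it preserves the one-dimensional line $V_{\rho_\alpha}$, so $y_\alpha(k_\eta m \eta_0) = y_\alpha(\eta)$. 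Continuity of $y$ follows from continuity of the $\rho_\alpha$ and of the projection to projective spaces. Injectivity: if $y(k_1 \eta_0) = y(k_2 \eta_0)$, then $g := k_2^{-1}k_1$ stabilizes the line $V_{\rho_\alpha}$ for every $\alpha \in \Pi$; the stabilizer in $G$ of this highest weight line is the standard maximal parabolic $P_\alpha \supset P$, and a classical fact gives $\bigcap_{\alpha \in \Pi} P_\alpha = P = MAN$, whence $g \in P \cap K = M$ and $\eta_1 = \eta_2$. Since $\ccF(X) \simeq K/M$ is compact and $y$ is a continuous injection into a Hausdorff space, it is automatically a topological embedding.

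The main obstacle is the first step, namely producing proximal \emph{real} representations of $G$ with the prescribed highest weights: the complex highest-weight theory gives this immediately over $\C$, but descending to an irreducible real representation of $G$ without losing simplicity of the highest weight space is the nontrivial content of Tits' theorem, and one typically quotes this result rather than reconstructing it. The other two parts, while careful, are essentially formal consequences once the representations are in hand.
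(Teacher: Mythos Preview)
The paper does not prove this lemma: it is stated as a citation, introduced by ``The following Lemma can be found in \cite[Lemma 5.32]{BenoistQuint}. It is due to Tits \cite{Tits1971}.'' There is therefore no proof in the paper to compare your proposal against.

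That said, your sketch is a faithful outline of the standard argument as it appears in the cited references. The existence part is exactly the content one quotes from Tits (and your caveat that the descent from $\C$ to $\R$ while preserving proximality is the nontrivial step is well placed). Your argument for the basis property is correct: since $\Pi$ is a basis of $\fa^*$ and the Killing form is nondegenerate, the orthogonal complement of $\Pi\setminus\{\alpha\}$ is a line, so the $(\chi_{\max,\alpha})_{\alpha\in\Pi}$ are linearly independent and hence a basis. Your treatment of the embedding is also the standard one: $M$-invariance of the highest weight line gives well-definedness, the intersection $\bigcap_{\alpha\in\Pi} P_\alpha = P$ gives injectivity, and compactness of $K/M$ upgrades the continuous injection to a topological embedding. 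Nothing is missing; you have simply supplied what the paper deliberately omits as a black-box citation.
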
	
We also define a dual map $H:\ccF(X) \rightarrow \prod_{\alpha\in \Pi} Gr_{ \dim(V_\alpha)-1}(V_\alpha)$ as follows.
For every $\eta \in \ccF(X)$, let $k_\eta\in K$ be an element so that $\eta=k_\eta \check{\eta}_0$ then
	\begin{align*}
	\ccF(X) &\overset{Y}{\longrightarrow}  \prod_{\alpha\in \Pi} Gr_{ \dim(V_\alpha)-1}(V_\alpha) \\
	\xi:=k_\xi \check{\eta}_0 & \longmapsto \big( Y_\alpha(\xi):= \rho_\alpha (k_\xi) Y_{\rho_\alpha} \big)_{\alpha\in \Pi}.
	\end{align*}

The maps $y$ and $Y$ provide us two ways to embed the space of asymptotic Weyl chambers $\mathcal{F}(X)$.

\begin{corollaire}\label{cor_flags_pos_gen_grassmann}
The map 
	\begin{align*}
	\ccF^{(2)}(X) &\longrightarrow 
		\prod_{\alpha\in \Pi} \mathbb{P}(V_\alpha)\oplus Gr_{\dim(V)-1}(V_\alpha) \\
	(f_+,f_-) & \longmapsto 
		\big(y_\alpha(f_+) \oplus Y_\alpha(f_-) \big)_{\alpha\in \Pi}. 
	\end{align*}
	is a $G-$equivariant embedding of the space of flags in general position into this product of projective spaces in general position i.e. the associated subspaces are in direct sum.	
\end{corollaire}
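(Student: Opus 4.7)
My plan is to derive this corollary directly from Lemma \ref{lemme_tits} together with its natural dual, exploiting the fact that $\ccF^{(2)}(X) = G/AM = G\cdot(\eta_0,\check\eta_0)$. First I would note that every $(f_+,f_-)\in \ccF^{(2)}(X)$ can be written as $(g\eta_0, g\check\eta_0)$ for some $g\in G$, and by the Iwasawa decomposition $g=k\exp(v)n$ one has $\rho_\alpha(\exp(v)n)V_{\rho_\alpha}=V_{\rho_\alpha}$: indeed the parabolic $P=MAN$ stabilises the highest weight line because $A$ acts on it by $\chi_{max,\alpha}$, $M$ commutes with $A$ and therefore preserves weight spaces, and $N$ raises weights so fixes the highest weight space as a subspace. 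Hence $y_\alpha(f_+)=\rho_\alpha(g)V_{\rho_\alpha}$ is independent of the choice of $g$ and coincides with the formula of Lemma \ref{lemme_tits}. A symmetric argument using that $\check N$ lowers weights shows that the opposite parabolic $\check P = MA\check N$ preserves $Y_{\rho_\alpha}$, so $Y_\alpha(f_-)=\rho_\alpha(g)Y_{\rho_\alpha}$ is likewise well-defined.

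The general position assertion is then automatic: since $V_\alpha=V_{\rho_\alpha}\oplus Y_{\rho_\alpha}$ holds by the very definition of $Y_{\rho_\alpha}$ as the $\fa$-invariant complement of the highest weight line, applying the invertible $\rho_\alpha(g)$ yields $V_\alpha=y_\alpha(f_+)\oplus Y_\alpha(f_-)$. $G$-equivariance follows at once from the formula $(hf_+,hf_-)\mapsto (\rho_\alpha(h)y_\alpha(f_+)\oplus \rho_\alpha(h)Y_\alpha(f_-))_{\alpha\in\Pi}$, which is built into the construction.

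For the embedding property, injectivity decouples: the first coordinate of the image recovers $f_+$ by injectivity of $y$ in Lemma \ref{lemme_tits}, while the second coordinate recovers $f_-$ by the analogous statement for $Y$, which follows from Lemma \ref{lemme_tits} applied to $-\fa^+$ in place of $\fa^+$ (equivalently, after conjugation by the longest element of the Weyl group, so that $\check\eta_0$ plays the role of $\eta_0$). Continuity of the product map is clear, and since $\ccF(X)\simeq K/M$ is compact and the target product of projective and Grassmannian factors is Hausdorff, the continuous injections $y$ and $Y$ are automatically topological embeddings; restricting the product embedding to the subspace $\ccF^{(2)}(X)\subset \ccF(X)\times \ccF(X)$ gives the claim. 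The only step requiring any verification beyond routine bookkeeping is the dual version of Lemma \ref{lemme_tits} for $Y$, but this reduces to Lemma \ref{lemme_tits} by symmetry, so no serious obstacle arises.
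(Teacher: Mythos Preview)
Your proposal is correct. The paper states this corollary without proof, treating it as an immediate consequence of Lemma~\ref{lemme_tits} and the definition of the dual map $Y$; your argument supplies precisely the routine verifications (stabilisation of $V_{\rho_\alpha}$ by $P$ and of $Y_{\rho_\alpha}$ by the opposite parabolic, direct sum preserved under $\rho_\alpha(g)$, injectivity via Lemma~\ref{lemme_tits} and its dual, and the compactness argument for the embedding) that the paper leaves implicit.
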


Now we give an interpretation of the Cartan projection, the Iwasawa cocycle and the Jordan projection in terms of representations of $G$. 
The complete proof can be found in \cite{BenoistQuint}.
	\begin{lemme}[Lemma 5.33 \cite{BenoistQuint}]\label{lemme_repr_cartan_jordan_iwasawa}
	 Let $\alpha\in \Pi$ be a simple root and consider $(V_\alpha,\rho_\alpha)$ the proximal representation of $G$ given by Lemma \ref{lemme_tits}. Then
	\begin{itemize}
	\item[(a)] there exists a $\rho_\alpha( K)-$invariant Euclidean norm on $V_\alpha$ such that, for all $a\in A$, the endomorphism $\rho_\alpha(a)$ is symmetric.
	\item[(b)] for such a norm and the corresponding subordinate norm on $\mathrm{End}(V_\alpha)$, for all $g\in G$, $\eta \in \ccF(X)$ and $v_\eta\in y_\alpha(\eta)$, one has
		\begin{itemize}
		\item[(i)] $\chi_{max,\alpha}\big(\mu(g)\big)= \log\big(\Vert \rho_\alpha(g) \Vert \big) $,
		\item[(ii)] $\chi_{max,\alpha}\big(\lambda(g)\big)= \log \big(\lambda_1(\rho_\alpha(g))\big) $,
		\item[(iii)] $\chi_{max,\alpha}\big(\sigma(g,\eta)\big) = \log \frac{ \Vert \rho_\alpha(g)v_\eta \Vert }{\Vert v_\eta \Vert}$.
		\end{itemize}
	\end{itemize}
	\end{lemme}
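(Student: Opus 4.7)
The plan is to establish (a) first --- which is the only technical heart --- and then derive the three formulas in (b) one after the other using the Cartan, Jordan, and Iwasawa decompositions respectively.

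For part (a), I would begin by averaging any Euclidean inner product on $V_\alpha$ over the compact group $\rho_\alpha(K)$ to obtain a $\rho_\alpha(K)$-invariant inner product; this alone forces $d\rho_\alpha(\fk)$ into the skew-symmetric operators. The delicate step is to simultaneously arrange that $d\rho_\alpha(\fp)$ consists of symmetric operators (equivalently, that $\rho_\alpha(A)$ is made of commuting positive symmetric endomorphisms). For this I would appeal to the fact that $\rho_\alpha$ is algebraic: a Cartan involution $\theta$ of $G$ is always intertwined with a Cartan involution $h \mapsto (h^*)^{-1}$ of $\GL(V_\alpha)$ coming from a suitable Euclidean structure, by Mostow's theorem on Cartan involutions of reductive real algebraic subgroups. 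Differentiating this compatibility gives $d\rho_\alpha(\fp) \subset \mathrm{Sym}(V_\alpha)$, and this is where the main obstacle lies.

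Once the norm from (a) is fixed, (b)(i) is a direct computation: for a Cartan decomposition $g = k_1 \exp(\mu(g)) k_2$, both $\rho_\alpha(k_1)$ and $\rho_\alpha(k_2)$ are orthogonal, hence
\begin{equation*}
\|\rho_\alpha(g)\| \;=\; \|\rho_\alpha(\exp \mu(g))\|.
\end{equation*}
The operator $\rho_\alpha(\exp \mu(g))$ is symmetric, so its operator norm equals its spectral radius. The weight decomposition $V_\alpha = \oplus_\chi V_\chi$ diagonalises it with eigenvalues $\exp(\chi(\mu(g)))$, and because $\mu(g) \in \fa^+$ the maximum is attained at $\chi_{max,\alpha}$, which proves (i). Then (ii) follows by applying (i) to $g^n$, dividing by $n$, and combining the spectral radius formula $\lambda(g) = \lim n^{-1}\mu(g^n)$ quoted in the excerpt with the usual spectral radius formula $\lambda_1(h) = \lim n^{-1}\log \|h^n\|$ in $\mathrm{End}(V_\alpha)$.

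For (iii), I would use the Iwasawa decomposition $g k_\eta = k \exp(\sigma(g,\eta)) n$ with $k \in K$ and $n \in N$. Fix a unit highest weight vector $v_0 \in V_{\rho_\alpha}$ and set $v_\eta := \rho_\alpha(k_\eta) v_0$, which lies in $y_\alpha(\eta)$ and is a unit vector by $K$-orthogonality. The key observation is that $N = \exp \fn$ acts trivially on the highest weight line, since every positive root would raise the weight past the maximum; hence $\rho_\alpha(n) v_0 = v_0$. Therefore
\begin{equation*}
\rho_\alpha(g) v_\eta \;=\; \rho_\alpha(k)\,\rho_\alpha(\exp\sigma(g,\eta))\, v_0 \;=\; e^{\chi_{max,\alpha}(\sigma(g,\eta))}\,\rho_\alpha(k) v_0,
\end{equation*}
and orthogonality of $\rho_\alpha(k)$ gives $\|\rho_\alpha(g) v_\eta\|/\|v_\eta\| = \exp(\chi_{max,\alpha}(\sigma(g,\eta)))$. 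Taking logarithms yields (iii), and by scale invariance the formula holds for any nonzero $v_\eta \in y_\alpha(\eta)$. The remainder of the proof is thus pure bookkeeping once (a) is secured.
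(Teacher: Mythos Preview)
The paper does not actually prove this lemma; it states it as Lemma~5.33 of \cite{BenoistQuint} and explicitly says ``The complete proof can be found in \cite{BenoistQuint}.'' So there is no in-paper proof to compare against. That said, your sketch is correct and is essentially the standard argument (and the one Benoist--Quint give): Mostow's theorem for (a), then the Cartan decomposition plus the symmetry of $\rho_\alpha(A)$ for (b)(i), the two spectral radius formulas for (b)(ii), and the Iwasawa decomposition together with $N$ fixing the highest weight line for (b)(iii). Nothing is missing.
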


The following lemma gives estimations on the Cartan projection of products of any pair of elements in $G$.

	\begin{lemme}\label{lemme_cartan_produit}
	There exists a continuous, left and right $K-$invariant, function $h\in G\mapsto C_h\in \R_+$ such that
	\begin{itemize}
	\item[(i)] for any $g\in G$, the Cartan projections $ \mu(gh)- \mu(g)$ and $\mu(hg)-\mu(g)$ are in the ball $ \overline{B_\fa (0,C_h)}$,
	\item[(ii)] for any $\eta \in \mathcal{F}(X)$, the Iwasawa cocycle $\sigma(h,\eta)\in \overline{B_\fa (0,C_h)}$.
\end{itemize}	 
	\end{lemme}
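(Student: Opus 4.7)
The plan is to reduce both claims to norm estimates in the proximal representations $(\rho_\alpha, V_\alpha)$ of Lemma \ref{lemme_tits}, via the identities of Lemma \ref{lemme_repr_cartan_jordan_iwasawa}. Fix, for each simple root $\alpha \in \Pi$, a $\rho_\alpha(K)$-invariant Euclidean norm on $V_\alpha$ as given by part (a) of Lemma \ref{lemme_repr_cartan_jordan_iwasawa}, and equip $\mathrm{End}(V_\alpha)$ with the subordinate operator norm. Define
$$ C_{h,\alpha} := \max\bigl(\log\|\rho_\alpha(h)\|,\ \log\|\rho_\alpha(h^{-1})\|\bigr) \geq 0. $$
Since the $\rho_\alpha$ are continuous representations and $\rho_\alpha(K)$ acts by isometries, each $h\mapsto C_{h,\alpha}$ is continuous and left and right $K$-invariant.

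For (i), I would use submultiplicativity of the operator norm: $\|\rho_\alpha(gh)\| \le \|\rho_\alpha(g)\|\cdot\|\rho_\alpha(h)\|$, and symmetrically $\|\rho_\alpha(g)\| = \|\rho_\alpha(gh)\rho_\alpha(h^{-1})\| \le \|\rho_\alpha(gh)\|\cdot \|\rho_\alpha(h^{-1})\|$. Taking logarithms and applying item (b)(i) of Lemma \ref{lemme_repr_cartan_jordan_iwasawa} gives
$$ \bigl|\chi_{\max,\alpha}\bigl(\mu(gh)-\mu(g)\bigr)\bigr| \le C_{h,\alpha}, $$
and the exact same computation on the other side yields the bound for $\mu(hg)-\mu(g)$. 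For (ii), I would apply part (b)(iii) of the same lemma: for any $\eta\in\mathcal{F}(X)$ and any unit representative $v_\eta\in y_\alpha(\eta)$,
$$ \chi_{\max,\alpha}\bigl(\sigma(h,\eta)\bigr) = \log\frac{\|\rho_\alpha(h)v_\eta\|}{\|v_\eta\|}, $$
and since $\|\rho_\alpha(h^{-1})\|^{-1} \le \|\rho_\alpha(h)v_\eta\|/\|v_\eta\| \le \|\rho_\alpha(h)\|$, one gets $|\chi_{\max,\alpha}(\sigma(h,\eta))| \le C_{h,\alpha}$ as well.

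To promote these coordinate bounds into a single norm bound in $\fa$, I would invoke the second assertion of Lemma \ref{lemme_tits}: the family $(\chi_{\max,\alpha})_{\alpha\in\Pi}$ is a basis of $\fa^*$. Therefore there exists a universal constant $\kappa>0$ (depending only on this basis and the Killing norm on $\fa$) such that for every $v\in\fa$,
$$ \|v\| \le \kappa \max_{\alpha\in\Pi}|\chi_{\max,\alpha}(v)|. $$
Setting $C_h := \kappa \max_{\alpha\in\Pi} C_{h,\alpha}$ then gives a continuous, left and right $K$-invariant map $h\mapsto C_h$ for which (i) and (ii) both hold.

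There is no real obstacle here: the argument is entirely bookkeeping once one has the dictionary of Lemma \ref{lemme_repr_cartan_jordan_iwasawa} and the fact that $\rho_\alpha(K)$ is orthogonal (so that norms are $K$-bi-invariant and the constants $C_{h,\alpha}$ depend only on the $K$-double coset of $h$). The only minor subtlety to flag is that the estimate in (ii) uses a specific choice of $\rho_\alpha(K)$-invariant norm, but this choice is already fixed when the cocycle identity of Lemma \ref{lemme_repr_cartan_jordan_iwasawa}(b)(iii) is invoked, so no ambiguity arises.
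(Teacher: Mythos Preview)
Your proof is correct and follows essentially the same route as the paper's: both arguments reduce to the submultiplicativity of the operator norm in each representation $(\rho_\alpha,V_\alpha)$, translate via Lemma~\ref{lemme_repr_cartan_jordan_iwasawa} into coordinate-wise bounds $|\chi_{\max,\alpha}(\cdot)|\le h_\alpha$ (your $C_{h,\alpha}$ equals the paper's $h_\alpha=\max\bigl(\chi_{\max,\alpha}(\mu(h)),\chi_{\max,\alpha}(\mu(h^{-1}))\bigr)$ by item~(b)(i)), and then use that $(\chi_{\max,\alpha})_{\alpha\in\Pi}$ is a basis of~$\fa^*$ to pass to a single Killing-norm bound. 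The only cosmetic difference is in the packaging of the final constant: the paper takes $C_h$ to be the radius of the smallest ball containing the box $\prod_\alpha[-h_\alpha,h_\alpha]$ in dual coordinates, whereas you take $\kappa\max_\alpha C_{h,\alpha}$; both are continuous, $K$-bi-invariant, and dominate the coordinate bounds, so either choice works.
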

\begin{proof}
Abusing terminology, we say that a function is $K-$invariant when it is $K-$invariant for both  left and right action. 

Let us prove the first point.
For any $\alpha \in \Pi$, we consider the proximal irreducible representation $(\rho_\alpha, V_\alpha)$ of $G$ given by Lemma \ref{lemme_tits}. 

Using Lemma \ref{lemme_repr_cartan_jordan_iwasawa}, we endow each vector space $V_\alpha$ with  $\rho_\alpha (K)-$invariant Euclidean norm.
Classical properties of the norm lead, for all $\alpha\in \Pi$ and every $g,h\in G$, to
\begin{align*}\label{lemme_cartan_produit_ineq1}
\frac{\Vert \rho_\alpha (g) \Vert }{\Vert \rho_\alpha(h^{-1}) \Vert} 
&\leq \Vert \rho_\alpha(gh) \Vert \leq \Vert \rho_\alpha(g) \Vert \Vert \rho_\alpha(h) \Vert, \\
 \frac{1 }{\Vert \rho_\alpha(h^{-1}) \Vert} 
 &\leq \frac{\Vert \rho_\alpha(gh) \Vert}{\Vert \rho_\alpha(g) \Vert}
 \leq  \Vert \rho_\alpha(h) \Vert.
\end{align*}
Note that we obtain the same inequalities for $hg$.s
By Lemma \ref{lemme_repr_cartan_jordan_iwasawa}, we deduce 
\begin{equation}\label{lemme_cartan_produit_ineq2}
-\chi_{max,\alpha}\big(\mu(h^{-1})\big) 
\leq \chi_{max,\alpha}\big(\mu(gh)-\mu(g)\big)
\leq \chi_{max,\alpha}\big(\mu(h)\big). 
\end{equation}
For any $\alpha\in \Pi$, set $h_\alpha:= \max \Big(\chi_{max,\alpha}\big(\mu(h)\big), \chi_{max,\alpha}\big(\mu(h^{-1})\big) \Big)$.
Furthermore, by Lemma \ref{lemme_tits}, the weights $(\chi_{max,\alpha})_{\alpha\in \Pi}$ form a basis of the dual space $\fa^*$.
In other word, they admit a dual basis in $\fa$.
Denote by $C_h>0$ the real number such that $\overline{B_\fa(0,C_h)}$ is the smallest closed ball containing any point of dual coordinates in $\big([-h_\alpha, h_\alpha]\big)_{\alpha \in \Pi}$ for the dual basis of $(\chi_{max,\alpha})_{\alpha\in \Pi}$.
Hence $\overline{B_\fa(0,C_h)} $ is compact and contains $\mu(gh)-\mu(g)$ and $\mu(hg)-\mu(g)$. 

It remains to show that the function $h\mapsto C_h$ is continuous and $K-$invariant.
It is due to the fact that the Cartan projection and the map $h\mapsto \mu(h^{-1})$  are both continuous and $K-$invariant. 
Hence, by taking the supremum in each coordinate, the map $h \mapsto (h_\alpha)_{\alpha \in \Pi}$ is continuous and $K-$invariant.
Furthermore, by definition of $C_h$, we obtain $K-$invariance and continuity of $h\mapsto C_h$.

Similarly, the second point is a direct consequence of Lemma \ref{lemme_repr_cartan_jordan_iwasawa}, (i) and (iii) and of the inequality 
\begin{equation}
\frac{1}{\Vert \rho_\alpha(h^{-1}) \Vert} \leq \frac{\Vert \rho_\alpha(h)(v_\eta) \Vert}{\Vert v_\eta \Vert} \leq \Vert \rho_\alpha(h) \Vert
\end{equation}
where $\eta \in \mathcal{F}(X)$ and $v_\eta\in V_\alpha$ is the associated non trivial vector.

\end{proof}

\subsection{Loxodromic elements}
Let us now study the dynamical properties of the loxodromic elements in the representations of the previous paragraph. \cite[Lemma 5.37]{BenoistQuint} states that any element of $G$ is loxodromic if and only if its image is proximal for every representations given by Lemma \ref{lemme_tits}.	
This allows to extend the notions and results on proximal elements to loxodromic elements in $G$.
\begin{definition}\label{defin_lox_schottky}
An element $g\in G$ is \emph{loxodromic} if its Jordan projection $\lambda(g)$ is in the interior of the Weyl chamber $\mathfrak{a}^{++}$ or (equivalently) if for all $\alpha \in \Pi$ the endomorphisms $\rho_\alpha(g)$ are proximal.

Let $0<\varepsilon\leq r$.
An element $g\in G$ is \emph{$(r,\varepsilon)$-loxodromic} if for all $\alpha \in \Pi$ the endomorphisms $\rho_\alpha(g)$ are $(r,\varepsilon)$-proximal.

A semigroup $\G$ of $G$ is \emph{strongly $(r,\varepsilon)$-Schottky} if for all $\alpha \in \Pi$ the semigroups $\rho_\alpha(\G)\subset \mathrm{End}(V_\alpha)$ are strongly $(r,\varepsilon)$-Schottky.
\end{definition}

Attractive and repulsive asymptotic Weyl chambers of loxodromic elements were defined in section  2.2 as follows.	
For any loxodromic element $g\in G$, then $(g^+,g^-):=h_g(\eta_0,\check{\eta}_0) \in \ccF^{(2)}(X)$ where $h_g\in G$ is an element so that there is an angular part $m(g)\in M$ with $g=h_g e^{\lambda(g)}m(g)h_g^{-1}$. 

The $G-$equivariant map $(f_+,f_-)\in \ccF^{(2)}(X) \rightarrow 
		\big( y_\alpha(f_+) \oplus Y_\alpha(g_-) \big)_{\alpha\in \Pi}$ 
given by Corollary \ref{cor_flags_pos_gen_grassmann} allows to caracterize attractive points and repulsive points in $\ccF(X)$ for loxodromic elements.

	\begin{lemme}\label{lemme_carac_lox}
For any loxodromic element $g\in G$, the following statements are true.

\begin{itemize}
\item[(i)] $g^{-1}$ is loxodromic, of attractive point $g^-$ and repulsive point $g^+$,
\item[(ii)]  the image of $(g^+,g^-)\in \ccF^{(2)}(X)$ by the above map is the family of attractive points and repulsive hyperplanes in general position $\big(x_+(\rho_\alpha(g)) \oplus X_-(\rho_\alpha(g)) \big)_{\alpha\in \Pi}$,
\item[(iii)] $g$ contracts any point $\eta \in \ccF(X)$ in general position with $g^-$, to $g^+$ i.e.
$\underset{n \rightarrow + \infty}{\lim} g^n \eta = g^+$,
\item[(iv)] for any nonempty open set $O_-\subset \ccF(X)$ in general position with $g^+$, for any nonempty open neighbourhood  $U_- \subset \ccF(X)$ of $g^-$, there exists $N\in \mathbb{N}$ so that for any $n\geq N$, then
$ O_- \cap g^n U_- \neq \emptyset .$ 
\end{itemize}
	\end{lemme}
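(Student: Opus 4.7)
The plan is to bootstrap the four items from the representation–theoretic characterisation already developed in the excerpt, using Tits' embeddings $y$ and $Y$ and the proximality of $\rho_\alpha(g)$. The main conceptual work lies in item (ii); items (i), (iii), (iv) will then follow essentially formally.

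\emph{Item (i).} Write $g=h_g e^{\lambda(g)}m(g)h_g^{-1}$ (noting that $m(g)\in M$ commutes with $e^{\lambda(g)}$). Let $w_0$ be the longest element of the Weyl group $N_K(A)/M$ and let $n_0\in N_K(A)$ be a representative, so $\mathrm{Ad}(n_0)\fa^+=-\fa^+$ and $n_0^2\in M$. Then
$$g^{-1}=h_g m(g)^{-1} n_0^{-1}\bigl(n_0 e^{-\lambda(g)}n_0^{-1}\bigr)n_0 h_g^{-1}=(h_g n_0^{-1})\,e^{-w_0\lambda(g)}\bigl(n_0 m(g)^{-1}n_0^{-1}\bigr)(h_g n_0^{-1})^{-1}.$$
The vector $-w_0\lambda(g)\in \fa^{++}$ is the Jordan projection of $g^{-1}$ (this is the opposition involution), and since $N_K(A)$ normalises $M$, the factor $n_0 m(g)^{-1}n_0^{-1}$ is an angular part. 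Hence one may take $h_{g^{-1}}=h_g n_0^{-1}$, and then
$$(g^{-1})^+=[h_g n_0^{-1}f_0(\fa^+)]=[h_g f_0(-\fa^+)]=g^-,$$
and symmetrically $(g^{-1})^-=g^+$.

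\emph{Item (ii).} Fix $\alpha\in\Pi$. The key weight–space observations are: since $\chi_{max,\alpha}+\beta\notin\Sigma(\rho_\alpha)$ for all positive roots $\beta$, the line $V_{\rho_\alpha}$ is killed by $\fn$ and fixed (projectively) by $AN$; since $M$ commutes with $A$ it preserves every weight space; and since $N^-$ only lowers weights, the hyperplane $Y_{\rho_\alpha}=\bigoplus_{\chi<\chi_{max,\alpha}}V_\chi$ is stable under $MAN^-=P^-$. From $g=h_g e^{\lambda(g)}m(g)h_g^{-1}$, the operator $e^{\lambda(g)}m(g)$ acts on $V_\alpha$ with dominant (real) eigenvalue $e^{\chi_{max,\alpha}(\lambda(g))}$ on the line $V_{\rho_\alpha}$ and preserves $Y_{\rho_\alpha}$; conjugating by $\rho_\alpha(h_g)$ therefore identifies
$$V_+(\rho_\alpha(g))=\rho_\alpha(h_g)V_{\rho_\alpha},\qquad V_-(\rho_\alpha(g))=\rho_\alpha(h_g)Y_{\rho_\alpha}.$$
To match this with the embeddings $y$ and $Y$, decompose $h_g$ using Iwasawa $G=KAN$ as $h_g=k_+a_+n_+$ and, using the opposite Iwasawa $G=KAN^-$, as $h_g=k_-a_-n_-$. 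Then $g^+=h_g\eta_0=k_+\eta_0$ and $g^-=h_g\check\eta_0=k_-\check\eta_0$, so by the projective invariance of $V_{\rho_\alpha}$ under $AN$ and of $Y_{\rho_\alpha}$ under $AN^-$,
$$y_\alpha(g^+)=\rho_\alpha(k_+)V_{\rho_\alpha}=\rho_\alpha(h_g)V_{\rho_\alpha}=x_+(\rho_\alpha(g)),$$
and similarly $Y_\alpha(g^-)=X_-(\rho_\alpha(g))$. This is the step that requires the most care, since one must invoke the correct Iwasawa decomposition for each of $\eta_0$ and $\check\eta_0$ and keep track of which parabolic stabilises which flag.

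\emph{Item (iii).} If $\eta\in\ccF(X)$ is in general position with $g^-$, then by Corollary \ref{cor_flags_pos_gen_grassmann} and (ii) the line $y_\alpha(\eta)\subset V_\alpha$ avoids the hyperplane $X_-(\rho_\alpha(g))$ for every $\alpha\in\Pi$. Proximality of $\rho_\alpha(g)$ then gives $\rho_\alpha(g)^n y_\alpha(\eta)\to x_+(\rho_\alpha(g))=y_\alpha(g^+)$ as $n\to+\infty$. Since $y$ is a topological embedding of $\ccF(X)$ into $\prod_\alpha \mathbb P(V_\alpha)$, we conclude $g^n\eta\to g^+$.

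\emph{Item (iv).} Pick any $\eta\in O_-$. By hypothesis $(\eta,g^+)\in\ccF^{(2)}(X)$, and by (i) $g^+=(g^{-1})^-$, so $\eta$ is in general position with $(g^{-1})^-$. Applying (iii) to $g^{-1}$ yields $g^{-n}\eta\to (g^{-1})^+=g^-$. Since $U_-$ is an open neighbourhood of $g^-$, there exists $N$ such that $g^{-n}\eta\in U_-$ for all $n\geq N$, i.e.\ $\eta\in O_-\cap g^n U_-$, proving the non-emptiness.
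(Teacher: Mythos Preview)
Your proof is correct and follows essentially the same route as the paper's: item (i) via the longest Weyl element, item (ii) by identifying $\rho_\alpha(h_g)V_{\rho_\alpha}$ and $\rho_\alpha(h_g)Y_{\rho_\alpha}$ with the proximal data, item (iii) via proximality in each $\mathbb{P}(V_\alpha)$, and item (iv) by applying (iii) to $g^{-1}$. If anything you are more careful than the paper in two places: in (ii) you explicitly justify $y_\alpha(g^+)=\rho_\alpha(h_g)V_{\rho_\alpha}$ via the Iwasawa decomposition and the $AN$-invariance of $V_{\rho_\alpha}$ (the paper tacitly uses the $G$-equivariance of $y$), and in (iii) you invoke that $y$ is a topological embedding to pull the convergence back to $\ccF(X)$.
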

	
\begin{proof}
Let $g\in G$ be a loxodromic element, consider an element $h_g\in G$ and an angular part $m(g)\in M$ so that $g=h_g m(g)e^{\lambda(g)}h_g^{-1}$.
Then $g^{-1}=h_g  m(g)^{-1} e^{-\lambda(g)}h_g^{-1}$.
Remark that $-\lambda(g)$ is in the interior of the Weyl chamber $-\fa^+$.
Consider the element of the Weyl group $N_K(A)/M$ whose adjoint action on $\fa$ sends $\fa^+$ onto $-\fa^+$. 
Denote one representative by $k_\iota\in N_K(A)$.
Then $-Ad(k_\iota)(\lambda(g)) \in \fa^{++}$, hence
$$g^{-1}= h_gk_\iota  (k_\iota^{-1} m(g) k_\iota)^{-1} e^{-Ad(k_\iota)(\lambda(g)) }  (h_g k_\iota)^{-1}.$$
Next, we remark that $k_\iota^{-1} M k_\iota$ is in the centralizer of $k_\iota^{-1} A k_\iota=A$, hence $k_\iota^{-1} m(g) k_\iota\in k_\iota^{-1} M k_\iota=M$.
We deduce that $\lambda(g^{-1})=-Ad(k_\iota)(\lambda(g))$ and set $h_{g^{-1}}=h_gk_\iota$ with the angular part $m(g^{-1})=(k_\iota^{-1} m(g) k_\iota)^{-1}$.
Then the pair of attractive and repulsive points of $g^{-1}$ in $\ccF(X)$ is $(h_gk_\iota \eta_0, h_g k_\iota \check{\eta}_0)$.
Since $k_\iota \eta_0=\check{\eta}_0$ and $k_\iota \check{\eta}_0=\eta_0$ we obtain the first statement i.e.
 that $g^-$ (resp. $g^+$) is the attractive (resp. repulsive) point of $g^{-1}$.

For the second point, it suffices to prove that for any loxodromic element $g\in G$, for every $\alpha \in \Pi$, the vector space $\rho_\alpha (h_g) V_{\rho_\alpha}=y_\alpha(g^+)$ is the eigenspace associated to the spectral radius of $\rho_\alpha(g)$ and that $\rho_\alpha (h_g) Y_{\rho_\alpha}=Y_\alpha(g^-)$ is the direct sum of the other eigenspaces.

Let $g\in G$ be a loxodromic element and let $ \alpha\in \Pi$.
By Lemma \ref{lemme_repr_cartan_jordan_iwasawa}, the spectral radius of $\rho_\alpha( g)$ is $\exp(\chi_{max,\alpha}(\lambda(g)))$. We deduce that the eigenspace of the highest eigenvalue is
 $\rho_\alpha (h_g) V_{\rho_\alpha}$. 
 Furthermore, by definition of proximality, $x_+(\rho_\alpha(g))=\mathbb{P}(\rho_\alpha (h_g) V_{\rho_\alpha})=y_\alpha(g^+)$.
 
 Remark that the other eigenvalues of $\rho_\alpha (g)$ are given by the other non maximal restricted weights of the representation $(\rho_\alpha, V_\alpha)$. 
 Hence $\rho_\alpha (h_g) Y_{\rho_\alpha}$ is the direct sum of the other eigenspaces of $\rho_\alpha (h_g)$.
 The projective space $\mathbb{P}(\rho_\alpha (h_g) Y_{\rho_\alpha})$ is thus the repulsive hyperplane of $\rho_\alpha(g)$.
Hence the second statement is true. 
 
 For any point $\eta \in \ccF(X)$ in general position with $g^-$ and for any $\alpha\in \Pi$, the point $y_\alpha(\eta)$ is then in general position with the hyperplane $Y_\alpha(g^-)$.
 Hence $\underset{n \rightarrow + \infty}{\lim} \rho_\alpha(g^n) y_\alpha(\eta) = x_+(\rho_\alpha(g))$. 
This gives the third statement.
 
 For the last statement, we apply the third statement to $g^{-1}$.  
It means that, for any nonempty open set $O_-\subset \ccF(X)$ in general position with $g^+$ and for any nonempty open neighbourhood $U_- \subset \ccF(X)$ of $g^-$, there exists $N\in \mathbb{N}$ so that for any $n\geq N$, then
$$\big(g^{-1}\big)^n O_- \cap U_- \neq \emptyset.$$  
Hence, for any $n\geq N$, 
 $$g^n\big( g^{-n} O_- \cap U_-  )\neq \emptyset,$$  
finally, 
  $$ O_- \cap g^n U_-  \neq \emptyset.$$  

\end{proof}

Lemma \ref{fait_rem_Cagri} and Corollary \ref{corollaire_produit_prox} extend to loxodromic elements.
	\begin{lemme}\label{lemme_rem_Cagri_lox}	
	For every loxodromic element $g\in G$, there exists $r>0$ and $n_0\in \mathbb{N}$ such that for all $n\geq n_0$ large enough, $g^n$ is $(r,\varepsilon_n)$-loxodromic with $\varepsilon_n \underset{n\rightarrow \infty}{\rightarrow} 0$. 
	\end{lemme}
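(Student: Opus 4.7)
The plan is to reduce the statement to the analogous claim for proximal elements already proved in Lemma \ref{fait_rem_Cagri}, and then aggregate the resulting data over the finitely many simple roots. By Definition \ref{defin_lox_schottky} (combined with the characterization recalled just before it, namely that $g\in G$ is loxodromic iff every $\rho_\alpha(g)$ is proximal), saying that $g^n$ is $(r,\varepsilon)$-loxodromic amounts to saying that $\rho_\alpha(g^n)=\rho_\alpha(g)^n$ is $(r,\varepsilon)$-proximal in $\mathrm{End}(V_\alpha)$ for every simple root $\alpha\in\Pi$. Loxodromicity of $g$ supplies exactly the proximality of each $\rho_\alpha(g)$ that is required as input for Lemma \ref{fait_rem_Cagri}.

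Concretely, I would apply Lemma \ref{fait_rem_Cagri} once in each of the finitely many proximal representations $(\rho_\alpha,V_\alpha)$, $\alpha\in\Pi$. This yields constants $r_\alpha>0$, $n_{0,\alpha}\in\mathbb{N}$, and a null sequence $\varepsilon_{n,\alpha}\to 0$ such that $\rho_\alpha(g)^n$ is $(r_\alpha,\varepsilon_{n,\alpha})$-proximal for every $n\geq n_{0,\alpha}$. Since $\Pi$ is finite (its cardinality is the real rank of $G$), I can set
$$r:=\min_{\alpha\in\Pi}r_\alpha>0,\qquad n_0':=\max_{\alpha\in\Pi}n_{0,\alpha},\qquad \varepsilon_n:=\max_{\alpha\in\Pi}\varepsilon_{n,\alpha},$$
and observe that $\varepsilon_n\to 0$ as $n\to\infty$.

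It remains to harmonize the two parameters so that the same pair $(r,\varepsilon_n)$ works for every $\alpha$ simultaneously. For $n$ large enough one has $\varepsilon_n\leq r$; by remark~1) following Definition \ref{defin_epsilon_prox_dim1} the radius $r_\alpha$ may be decreased to $r$, giving $(r,\varepsilon_{n,\alpha})$-proximality of $\rho_\alpha(g)^n$, and by remark~2) the contraction parameter $\varepsilon_{n,\alpha}$ may be increased to $\varepsilon_n$. Choosing $n_0\geq n_0'$ large enough that $\varepsilon_n\leq r$ for all $n\geq n_0$, one obtains that $\rho_\alpha(g)^n$ is $(r,\varepsilon_n)$-proximal for every $\alpha\in\Pi$ and every $n\geq n_0$, which by definition means that $g^n$ is $(r,\varepsilon_n)$-loxodromic.

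There is no real obstacle here: the argument is a direct transfer from the projective setting to the Lie-group setting through the proximal representations supplied by Lemma \ref{lemme_tits}. The only mild subtlety is to carefully track the two monotonicity directions in $(r,\varepsilon)$, so that the finitely many pairs of constants produced by the separate applications of Lemma \ref{fait_rem_Cagri} can be collapsed into a single pair valid uniformly over $\Pi$.
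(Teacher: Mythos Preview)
Your proposal is correct and is exactly the argument the paper has in mind: the paper does not give a separate proof but simply states that Lemma~\ref{fait_rem_Cagri} ``extends to loxodromic elements'', which is precisely the reduction through the finitely many proximal representations $(\rho_\alpha,V_\alpha)$ that you carry out. Your careful tracking of the monotonicity remarks to collapse the finitely many $(r_\alpha,\varepsilon_{n,\alpha})$ into a single pair $(r,\varepsilon_n)$ is the only point requiring attention, and you handle it correctly.
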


	\begin{corollaire} \label{corollaire_produit_lox}
Fix $0<\varepsilon\leq r$. 
Let $g\in G$ be a $(r,\varepsilon/2)-$loxodromic element so that $d(g^+,g^-)\geq 7r$.

Then for any $h\in G$ so that $\Vert h-id_V\Vert\leq \varepsilon/2$, the product $gh$ is $(2r,2\varepsilon)-$loxodromic, with $(gh)^+\in B(g^+,\varepsilon)$.
\end{corollaire}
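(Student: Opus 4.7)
The plan is straightforward: the corollary is the loxodromic analogue of Corollary \ref{corollaire_produit_prox}, and I intend to prove it by applying the latter diagonally in each of the Tits representations $(V_\alpha,\rho_\alpha)_{\alpha\in\Pi}$ of Lemma \ref{lemme_tits}, then reassembling via Definition \ref{defin_lox_schottky} and Lemma \ref{lemme_carac_lox}. Throughout, I read the metric $d$ on $\ccF(X)$ and the norm on $G$ through the embeddings these representations provide, as is implicit elsewhere in the paper.

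First I would fix $\alpha\in\Pi$ and translate every hypothesis. Since $g$ is $(r,\varepsilon/2)$-loxodromic, Definition \ref{defin_lox_schottky} says that $\rho_\alpha(g)\in \mathrm{GL}(V_\alpha)$ is $(r,\varepsilon/2)$-proximal; Lemma \ref{lemme_carac_lox}(ii) identifies its attractive point and repulsive hyperplane as $x_+(\rho_\alpha(g))=y_\alpha(g^+)$ and $X_-(\rho_\alpha(g))=Y_\alpha(g^-)$, so the hypothesis $d(g^+,g^-)\geq 7r$ gives $d(x_+(\rho_\alpha(g)),X_-(\rho_\alpha(g)))\geq 7r$ in $\mathbb{P}(V_\alpha)$. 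The norm bound on $h$ translates into $\Vert \rho_\alpha(h)-id_{V_\alpha}\Vert\leq \varepsilon/2$. These are exactly the hypotheses of Corollary \ref{corollaire_produit_prox} for the pair $(\rho_\alpha(g),\rho_\alpha(h))$; since $\rho_\alpha(gh)=\rho_\alpha(g)\rho_\alpha(h)$, that corollary yields $(2r,2\varepsilon)$-proximality of $\rho_\alpha(gh)$ together with the localization $x_+(\rho_\alpha(gh))\in B(x_+(\rho_\alpha(g)),\varepsilon)=B(y_\alpha(g^+),\varepsilon)$.

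Finally I would assemble over $\alpha\in\Pi$: the simultaneous proximality of $\rho_\alpha(gh)$ for every simple root is precisely Definition \ref{defin_lox_schottky} of $(2r,2\varepsilon)$-loxodromicity of $gh$ in $G$, and in particular $gh$ is loxodromic. Applying Lemma \ref{lemme_carac_lox}(ii) in reverse to the loxodromic element $gh$ gives $y_\alpha((gh)^+)=x_+(\rho_\alpha(gh))\in B(y_\alpha(g^+),\varepsilon)$ for every $\alpha$, which through the Furstenberg embedding $y$ of Lemma \ref{lemme_tits} translates to $(gh)^+\in B(g^+,\varepsilon)$ in $\ccF(X)$. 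The only real obstacle is notational: one must make sure that the abstract distance on $\ccF(X)$ and the abstract norm on $G$ appearing in the statement are the ones induced by the product of Tits representations, so that the constants $r$ and $\varepsilon$ transfer faithfully to each $\mathbb{P}(V_\alpha)$; once this is fixed, the corollary is the diagonal of its proximal counterpart across all simple roots.
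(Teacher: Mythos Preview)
Your proposal is correct and matches the paper's approach: the paper does not give a separate proof of Corollary~\ref{corollaire_produit_lox} but simply states that Lemma~\ref{fait_rem_Cagri} and Corollary~\ref{corollaire_produit_prox} ``extend to loxodromic elements,'' meaning exactly that one applies the proximal result in each representation $(\rho_\alpha,V_\alpha)$ and reassembles via Definition~\ref{defin_lox_schottky} and Lemma~\ref{lemme_carac_lox}(ii). Your remark that the distance on $\ccF(X)$ and the norm on $G$ must be read through the Tits embeddings is well taken; the paper is indeed implicit about this (note the slightly loose ``$id_V$'' in the statement), and elsewhere (e.g.\ Proposition~\ref{prop - lemme 3.6 de Benoist ii }) writes $d(y(\cdot),H(\cdot))$ rather than $d(\cdot,\cdot)$ when it needs to be precise.
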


Likewise, we generalize estimates of Proposition \ref{prop - lemme 1.4 de Benoist ii } to products of loxodromic elements of $G$ in general configuration. 

Given $l$ loxodromic elements $g_1,... g_l$ of $G$, set $g_0=g_l$ 
and assume that the asymptotic points $g_{i-1}^+$ and $g_{i}^-$ are opposite for all $1\leq i\leq l$. 
Thanks to lemma \ref{lemme_tits}, there exists a unique element $\nu=\nu(g_1,...,g_l)\in \fa$ whose coordinates in the dual basis of $(\chi_{\alpha , max})_{\alpha \in \Pi}$ are
$$\big( \chi_{\alpha , max}(\nu) \big)_{\alpha\in \Pi}:=\big( \nu_1(\rho_\alpha(g_1),...,\rho_\alpha(g_l))  \big)_{\alpha\in \Pi}.$$
This product does not depends on the choices of unit eigenvectors for $g_j$.
The product of projective spaces $\prod_{\alpha\in \Pi} \mathbb{P}(V_\alpha)$ is endowed with the natural distance.	
	\begin{prop}[Benoist\cite{benoist2000proprietes}]\label{prop - lemme 3.6 de Benoist ii } For all $0<\epsilon \leq r$, there exist positive constants $C_{r,\epsilon}$ such that for all $r>0$,  
	$\lim_{\epsilon\tv 0} C_{r,\epsilon} =0$ and such that the following holds.
	If $\g_1, ... \g_l$ are $(r,\varepsilon)$-loxodromic elements, such that for all $1\leq i\leq l$ with $\g_0=\g_{l}$ we have $d(y(\g_{i-1}^+),H(\g_i^-))\geq 6r$,
	then for all $n_1,...,n_l\geq 1$
	$$  \lambda(\g_l^{n_l}...\g_1^{n_1}) -\sum_{i=1}^l n_i \lambda(\g_i)  - \nu(\g_l,...,\g_1)  \in B_\fa(0, lC_{r,\epsilon}).$$
	Furthermore, the map $g:=\g_l^{n_l}...\g_1^{n_1}$ is $(2r,2\varepsilon)-$loxodromic with 
	$y(g^+)\in B(y(\g_l^+),\varepsilon)$ and $H(g^-)\in \mathcal{V}_\varepsilon(H(\g_1^-))$.
	\end{prop}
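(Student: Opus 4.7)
The plan is to reduce the higher-rank statement to the already-proved scalar version (Proposition \ref{prop - lemme 1.4 de Benoist ii }) by testing against each of the proximal representations $(\rho_\alpha,V_\alpha)_{\alpha\in\Pi}$ furnished by Lemma \ref{lemme_tits}. The whole point of the definitions of $(r,\varepsilon)$-loxodromicity and strongly $(r,\varepsilon)$-Schottky (Definition \ref{defin_lox_schottky}), together with the dictionary of Lemmas \ref{lemme_repr_cartan_jordan_iwasawa} and \ref{lemme_carac_lox}, is precisely to make such a reduction coordinate by coordinate.

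First I would fix $\alpha\in\Pi$ and apply the representation $\rho_\alpha$ to all the data. By definition each $\rho_\alpha(\g_i)$ is $(r,\varepsilon)$-proximal in $\mathrm{End}(V_\alpha)$; the hypothesis $d(y(\g_{i-1}^+),H(\g_i^-))\geq 6r$ in the product $\prod_{\alpha\in\Pi}\mathbb{P}(V_\alpha)\times Gr_{\dim V_\alpha-1}(V_\alpha)$ gives, via Lemma \ref{lemme_carac_lox}(ii), the component-wise inequality $d(x_+(\rho_\alpha(\g_{i-1})),X_-(\rho_\alpha(\g_i)))\geq 6r$. Thus the family $\rho_\alpha(\g_1),\dots,\rho_\alpha(\g_l)$ satisfies the hypotheses of Proposition \ref{prop - lemme 1.4 de Benoist ii }, which yields a constant $C_{r,\varepsilon}$ (independent of $\alpha$, since $\lim_{\varepsilon\to 0}C_{r,\varepsilon}=0$ is a universal statement and we may take the max over the finite set $\Pi$) such that
\begin{equation*}
\Bigl|\log\lambda_1\bigl(\rho_\alpha(\g_l^{n_l}\cdots\g_1^{n_1})\bigr)-\sum_{i=1}^l n_i \log\lambda_1(\rho_\alpha(\g_i))-\nu_1(\rho_\alpha(\g_l),\dots,\rho_\alpha(\g_1))\Bigr|\leq lC_{r,\varepsilon}.
\end{equation*}
By Lemma \ref{lemme_repr_cartan_jordan_iwasawa}(b)(ii), each term on the left is $\chi_{max,\alpha}$ applied to a Jordan projection, so the inequality reads
\begin{equation*}
\Bigl|\chi_{max,\alpha}\bigl(\lambda(\g_l^{n_l}\cdots\g_1^{n_1})-\sum_{i=1}^l n_i \lambda(\g_i)-\nu(\g_l,\dots,\g_1)\bigr)\Bigr|\leq lC_{r,\varepsilon},
\end{equation*}
using the very definition of $\nu(\g_l,\dots,\g_1)$ as the element of $\fa$ whose $\chi_{max,\alpha}$-coordinate is $\nu_1(\rho_\alpha(\g_l),\dots,\rho_\alpha(\g_1))$.

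Next I would pass from the estimates in each coordinate to an estimate in $\fa$. Since $(\chi_{max,\alpha})_{\alpha\in\Pi}$ is a basis of $\fa^*$ (Lemma \ref{lemme_tits}), the linear map $\Psi:\fa\to\R^{\Pi}$, $v\mapsto(\chi_{max,\alpha}(v))_\alpha$, is a linear isomorphism; so there is a constant $M>0$, depending only on the chosen norm on $\fa$ and on $\Pi$, with $\|v\|\leq M\max_\alpha|\chi_{max,\alpha}(v)|$. Replacing $C_{r,\varepsilon}$ by $MC_{r,\varepsilon}$ (which still tends to $0$ with $\varepsilon$) we conclude $\lambda(\g_l^{n_l}\cdots\g_1^{n_1})-\sum_i n_i\lambda(\g_i)-\nu(\g_l,\dots,\g_1)\in B_\fa(0,lC_{r,\varepsilon})$, which is the claimed estimate.

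For the remaining assertions, Proposition \ref{prop - lemme 1.4 de Benoist ii } also gives, for each $\alpha$, that $\rho_\alpha(g)$ is $(2r,2\varepsilon)$-proximal with $x_+(\rho_\alpha(g))\in B(x_+(\rho_\alpha(\g_l)),\varepsilon)$ and $X_-(\rho_\alpha(g))\subset\mathcal{V}_\varepsilon(X_-(\rho_\alpha(\g_1)))$. By Definition \ref{defin_lox_schottky}, simultaneous $(2r,2\varepsilon)$-proximality in every $\rho_\alpha$ means exactly that $g$ is $(2r,2\varepsilon)$-loxodromic. Finally, using once more the correspondence from Lemma \ref{lemme_carac_lox}(ii), the coordinate-wise inclusions translate into $y(g^+)\in B(y(\g_l^+),\varepsilon)$ and $H(g^-)\in\mathcal{V}_\varepsilon(H(\g_1^-))$ in the product metric on $\prod_\alpha\mathbb{P}(V_\alpha)$ and $\prod_\alpha Gr_{\dim V_\alpha-1}(V_\alpha)$. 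The only mildly delicate bookkeeping is harmonizing the constants $C_{r,\varepsilon}$ and the ambient norm across the finitely many $\alpha\in\Pi$, which is harmless since $|\Pi|$ is fixed.
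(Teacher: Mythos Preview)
Your argument is correct and is exactly the reduction the paper sets up. In fact the paper does not give a separate proof of this proposition: it is stated as a direct higher-rank transcription of Proposition \ref{prop - lemme 1.4 de Benoist ii } via the representations $(\rho_\alpha)_{\alpha\in\Pi}$, after the dictionary of Lemmas \ref{lemme_tits}, \ref{lemme_repr_cartan_jordan_iwasawa}, \ref{lemme_carac_lox}(ii) and the definition of $\nu$ have been put in place; your write-up simply makes that transcription explicit, including the one bookkeeping step (passing from coordinate-wise bounds in $(\chi_{max,\alpha})_{\alpha\in\Pi}$ to a bound in $\fa$ via a fixed linear-isomorphism constant, absorbed into $C_{r,\varepsilon}$).
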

Using Proposition \ref{prop - lemme 3.6 de Benoist ii }, one can construct finitely generated, strong $(r,\varepsilon)$-Schottky semigroups as follows.
Let $0<\varepsilon\leq r$.

Let $S\subset G$ be a family of $(r/2,\varepsilon/2)-$loxodromic elements such that $d(y(h^+),H(h'^-))\geq 7r$ for all $h,h'\in S$.
Denote by $\G'$ the semigroup generated by $S$. 
Then every element $g\in \G$ is a noncommuting product of proximal elements of the form $g_l^{n_l}...g_1^{n_1}$ with $n_1,...,n_l\geq 1$ and $g_i\neq g_{i+1}\in S$ for all $1\leq i<l$.
By Proposition \ref{prop - lemme 3.6 de Benoist ii }, we deduce $d(y(g^+),H(g^-)) \geq d(y(g_l^+),H(g_1^-))-\varepsilon \geq 6r$ and that $g$ is $(r,\varepsilon)-$loxodromic.
Thus, $\G'$ is strongly $(r,\varepsilon)$-Schottky.
\section{Topological transitivity}
Recall the definition of topological transitivity. We denote by $\fa_1^{+}$ (resp. $\fa_1^{++}$ ) the intersection of the unit sphere in $\fa$ with $\fa^+$ (resp. $\fa^{++}$). 
		\begin{definition}\label{definition_topological_transitivity}
		Let $\widetilde{\Omega} \subset \mathcal{W}(X)$ a $\G$-invariant and $\fa$-invariant subset of parametric flats $\mathcal{W}(X)$.
		Let $\Omega:=\G \backslash \widetilde{\Omega}$. 
		Fix a direction $\theta\in \fa_1^{++}$.
		The Weyl chamber flow $\phi_\R^{\theta}$ is \emph{topologically transitive} on $\Omega$ if for all open nonempty subsets $U,V \subset \Omega$, there exists $t_n\rightarrow +\infty$ such that for every $n\geq 1$, we have $U\cap \phi_{t_n}^\theta (V)\neq \emptyset $.
			\end{definition}
It is a standard fact that it is equivalent to one the following properties : 
\begin{itemize}
			\item[(1)] there is a $\phi_\R^{\theta}-$dense orbit in $\Omega$.
			\item[(2)] for all open nonempty subsets $\widetilde{U},\widetilde{V}\subset \widetilde{\Omega}(X)$, there exists $t_n\rightarrow +\infty$ such that for every $n\geq 1$,
			$\G \widetilde{U} \cap \phi_{t_n}^\theta ( \widetilde{V})\neq \emptyset $.
			\item[(3)] for all open nonempty subsets $\widetilde{U},\widetilde{V}\subset \widetilde{\Omega}$, there exists $t_n\rightarrow +\infty$ such that for every $n\geq 1$, there exists $\g_n \in \G$ with
			$\g_n \widetilde{U} \cap \phi_{t_n}^\theta ( \widetilde{V})\neq \emptyset $.
			\end{itemize}

The equivalence between the definition and property (1) can be found in  \cite[Proposition 3.5]{eberlein1972flow}. The others equivalences are straightforward.

\subsection{Limit set, limit cone of Zariski dense subgroup}

\begin{center}
\fbox{
\begin{minipage}{0.7\textwidth}
In the remaining parts of this paper, $\G \subset G$ is a Zariski dense semigroup of $G$.
\end{minipage}}
\end{center}

\begin{definition}\label{defin_limit_set}
A point $\eta \in \cF(X)$ is a \emph{limit point} if there exists a sequence $(\g_n)_{n\geq 1}$ in $\G$ such that $(\g_n [f_0(a^+)])_{n\geq 1}$ converges in $\cF(X)$ towards $\eta$.

The \emph{limit set} of $\G$, denoted by $L_+(\G)$, is the set of limit points of $\G$. 
It is a closed subset of $\ccF(X)$. 

Denote by $L_-(\G)$ the limit set of $\G^{-1}$ and finally let $L^{(2)}(\G)= \Big( L_+(\G)\times L_-(\G) \Big) \cap \mathcal{F}^{(2)}(X) $ the subset of $\cF^2(X)$ in general position.
\end{definition}

\begin{lemme}[\cite{benoist1997proprietes} Lemma 3.6 ] \label{lemme3.6-benoist1}
The set of pairs of attractive and repulsive points of loxodromic elements of $\G$ is dense in $L_+(\G)\times L_-(\G)$.
\end{lemme}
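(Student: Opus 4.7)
The plan is, given a target pair $(\xi,\eta)\in L_+(\G)\times L_-(\G)$ and $\varepsilon>0$, to produce a single loxodromic element $g\in\G$ whose attractive and repulsive points lie within $\varepsilon$ of $\xi$ and $\eta$ respectively. To start, I would use Zariski density---combined with the construction of Schottky subsemigroups sketched just after Proposition~\ref{prop - lemme 3.6 de Benoist ii }---to extract one fixed loxodromic element $h\in\G$. A classical consequence of Zariski density is that the $\G$-action on $L_+(\G)$ and the $\G^{-1}$-action on $L_-(\G)$ are minimal, hence $\G\cdot h^+$ is dense in $L_+(\G)$ and $\G^{-1}\cdot h^-$ is dense in $L_-(\G)$. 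I would then pick $\g_1,\g_2\in\G$ with $\g_1 h^+$ very close to $\xi$ and $\g_2^{-1} h^-$ very close to $\eta$; since general position is an open condition in $\ccF(X)\times\ccF(X)$, a small further perturbation inside these dense orbits allows me to also arrange that $(\g_1 h^+,\g_2^{-1} h^-)\in\ccF^{(2)}(X)$.

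The candidate is then $g_N:=\g_1\,h^N\,\g_2\in\G$ for $N\in\N$ large. I would analyse it in each proximal representation $(\rho_\alpha,V_\alpha)$ provided by Lemma~\ref{lemme_tits}. The Spectral Radius Formula, used exactly as in the proof of Lemma~\ref{fait_rem_Cagri}, gives
\[
\rho_\alpha(h)^N / \lambda_1\bigl(\rho_\alpha(h)\bigr)^N \;\longrightarrow\; \pi_\alpha,
\]
where $\pi_\alpha$ is the rank-one projection with image $V_+(\rho_\alpha(h))$ and kernel $V_-(\rho_\alpha(h))$; hence
\[
\rho_\alpha(g_N)/\lambda_1\bigl(\rho_\alpha(h)\bigr)^N \;\longrightarrow\; \rho_\alpha(\g_1)\,\pi_\alpha\,\rho_\alpha(\g_2),
\]
a rank-one operator whose image is $y_\alpha(\g_1 h^+)$ and whose kernel is $Y_\alpha(\g_2^{-1} h^-)$ in the notation of Corollary~\ref{cor_flags_pos_gen_grassmann}. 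The general position of $(\g_1 h^+,\g_2^{-1} h^-)$ says exactly that this image is not contained in this kernel, so the rank-one limit has nonzero trace and is itself proximal.

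With this limiting picture, I would apply Tits' criterion Lemma~\ref{critère_de_proximalité_Tits} to $\rho_\alpha(g_N)$: for $N$ large, $\rho_\alpha(g_N)$ sends the complement of a shrinking neighbourhood of $Y_\alpha(\g_2^{-1}h^-)$ into a shrinking ball around $y_\alpha(\g_1 h^+)$, and its Lipschitz constant on that complement tends to $0$ because the singular value ratio $|a_N(2)/a_N(1)|$ from the polar decomposition of $\rho_\alpha(g_N)$ tends to $0$, exactly as in the proof of Lemma~\ref{fait_rem_Cagri}. This yields $(r,\varepsilon_N)$-proximality of $\rho_\alpha(g_N)$ uniformly in $\alpha\in\Pi$ with $\varepsilon_N\to 0$, so by Definition~\ref{defin_lox_schottky} the element $g_N\in\G$ is loxodromic, and the criterion localises $g_N^+$ within $\varepsilon_N$ of $\g_1 h^+$ and $g_N^-$ within $\varepsilon_N$ of $\g_2^{-1}h^-$. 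Taking $N$ large and the initial orbital approximations fine enough then forces $(g_N^+,g_N^-)$ into any prescribed neighbourhood of $(\xi,\eta)$. The main obstacle is precisely this last step: one must replay the perturbative analysis of Lemma~\ref{fait_rem_Cagri} inside the modified sequence $\rho_\alpha(\g_1)\rho_\alpha(h)^N\rho_\alpha(\g_2)$ rather than for the powers of a single element, in order to upgrade the rank-one pointwise convergence above to uniform $(r,\varepsilon_N)$-proximality estimates valid simultaneously in every proximal representation.
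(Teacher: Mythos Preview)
The paper does not prove this lemma at all; it is quoted verbatim from Benoist \cite{benoist1997proprietes}, so there is no in-paper argument to compare against. Your strategy---produce one loxodromic $h$, push $h^+$ and $h^-$ near the targets via minimality, and then show that $g_N=\g_1 h^N\g_2$ is loxodromic with $(g_N^+,g_N^-)$ close to $(\g_1 h^+,\g_2^{-1}h^-)$ through the rank-one limit $\rho_\alpha(\g_1)\pi_\alpha\rho_\alpha(\g_2)$ and Tits' criterion---is the standard route and is correct in outline.

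Two points deserve a word of care. First, the minimality of the $\G$-action on $L_+(\G)$ that you invoke is itself one of Benoist's results from \cite{benoist1997proprietes}, of comparable depth to the lemma at hand; with the paper's Definition~\ref{defin_limit_set} it is not immediate, so it should be cited rather than called ``classical''. Second, your ``small further perturbation'' to force $(\g_1 h^+,\g_2^{-1}h^-)\in\ccF^{(2)}(X)$ needs the fact that the Zariski-closed set of points not opposite a fixed $\g_1 h^+$ meets $L_-(\G)$ in a set with empty interior; this follows from Zariski density of $L_-(\G)$ together with minimality, but is not literally a consequence of openness of general position plus orbit density. The final obstacle you flag---transferring the estimates of Lemma~\ref{fait_rem_Cagri} from $h^N$ to $\g_1 h^N\g_2$---is real but routine: the singular-value gap of $\rho_\alpha(g_N)$ differs from that of $\rho_\alpha(h^N)$ only by factors bounded by $\Vert\rho_\alpha(\g_i^{\pm 1})\Vert$, so the same argument gives $(r,\varepsilon_N)$-proximality with $\varepsilon_N\to 0$ uniformly over $\alpha\in\Pi$.
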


\begin{definition}
We denote by $\widetilde{\Omega}(X)$ the subset of \emph{non-wandering Weyl chambers}, defined through the Hopf parametrization by : 
$$\widetilde{\Omega}(X):= \cH^{-1}(L^{(2)}(\G)\times \fa).$$
This is a $\G-$invariant subset of $\mathcal{W}(X)$.
When $\G$ is a subgroup, we denote by $\Omega(X):=\G \backslash \widetilde{\Omega}(X)$ the quotient space.
\end{definition}

Conze and Guivarc'h proved in \cite[Theorem 6.4]{conze-guivarch-densite2000}, the existence of dense $\fa-$orbits in $\widetilde{\Omega}(X)$. 
By duality, it is equivalent to topological transitivity of left $\G-$action on $\widetilde{\Omega}(X)/AM \simeq L^{(2)}(\G)$.
We propose a new simpler proof of this result adapting the one for negatively curved manifolds of Eberlein \cite{eberlein1972flow}.
\begin{theorem}[\cite{conze-guivarch-densite2000}]\label{transitivity of Furstenberg boundary}
 For any open nonempty subsets $\cU^{(2)}, \cV^{(2)} \subset L^{(2)}(\G)$ there exists $g\in \G$ such that $g \cU^{(2)} \cap \cV^{(2)}\neq \emptyset$. 
\end{theorem}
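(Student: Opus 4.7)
Adapting Eberlein's rank-one argument, I will exhibit a single loxodromic element $h\in\G$ whose attractive fixed point $h^+$ lies (approximately) in the first coordinate of $\cV^{(2)}$ and whose repulsive fixed point $h^-$ lies (approximately) in the second coordinate of $\cU^{(2)}$; then $g:=h^n$ for sufficiently large $n$ will do the required transport.

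Fix base points $(u_+^0,u_-^0)\in\cU^{(2)}$ and $(v_+^0,v_-^0)\in\cV^{(2)}$, together with product neighbourhoods $\widehat\cU_\pm,\widehat\cV_\pm\subset\ccF(X)$ such that $(\widehat\cU_+\times\widehat\cU_-)\cap L^{(2)}\subset\cU^{(2)}$ and $(\widehat\cV_+\times\widehat\cV_-)\cap L^{(2)}\subset\cV^{(2)}$. Since $(\widehat\cV_+\cap L_+)\times(\widehat\cU_-\cap L_-)$ is a nonempty open subset of $L_+(\G)\times L_-(\G)$, Lemma~\ref{lemme3.6-benoist1} yields a loxodromic $h\in\G$ with $h^+\in\widehat\cV_+$ arbitrarily close to $v_+^0$ and $h^-\in\widehat\cU_-$ arbitrarily close to $u_-^0$. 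Since general position is an open condition, I may assume in addition that $u_+^0$ is in general position with $h^-$ and, after shrinking $\widehat\cV_-$ to a smaller neighbourhood of $v_-^0$ if necessary, that every point of $\widehat\cV_-$ is in general position with $h^+$.

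Fix $y:=v_-^0\in\widehat\cV_-\cap L_-$ and set $u_-^{(n)}:=h^{-n}y$. By parts (i) and (iii) of Lemma~\ref{lemme_carac_lox}, $h^{-1}$ is loxodromic with attractive point $h^-$, hence $u_-^{(n)}\to h^-$; for $n$ large this gives $u_-^{(n)}\in\widehat\cU_-$, and the $\G^{-1}$-invariance of $L_-$ gives $u_-^{(n)}\in L_-$. Symmetrically, Lemma~\ref{lemme_carac_lox}(iii) applied to $h$ gives $h^nu_+^0\to h^+\in\widehat\cV_+$, so that $h^nu_+^0\in\widehat\cV_+\cap L_+$ for $n$ large. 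Openness of general position then ensures, for $n$ sufficiently large, that the pair $(u_+^0,u_-^{(n)})$ lies in $L^{(2)}$ (since $u_+^0$ is in general position with $h^-\approx u_-^{(n)}$) and hence in $\cU^{(2)}$, and that $(h^nu_+^0,y)$ lies in $L^{(2)}$ (since $h^+\approx h^nu_+^0$ is in general position with $y\in\widehat\cV_-$) and hence in $\cV^{(2)}$. Therefore $g:=h^n\in\G$ satisfies $g(u_+^0,u_-^{(n)})\in g\cU^{(2)}\cap\cV^{(2)}$.

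The main obstacle I anticipate is this: naively iterating a loxodromic $h$ on a generic pair near $(h^+,h^-)$ attracts both coordinates to $h^+$, collapsing the image out of $\ccF^{(2)}(X)$ and therefore out of any target open set in $L^{(2)}$. The asymmetric choice $u_-^{(n)}=h^{-n}y$ sidesteps this: the second coordinate rides the stable direction of the saddle fixed point $(h^+,h^-)$ under the dynamics of $h$ and lands exactly on the prescribed point $y\in\widehat\cV_-$, while the first coordinate rides the unstable direction to $h^+\in\widehat\cV_+$.
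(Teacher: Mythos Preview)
Your proof is correct and follows the same Eberlein-style strategy as the paper, but your execution is more economical. The paper introduces an intermediate open set $W^{(2)}=W_+\times W_-$ with $W_+$ opposite to $\cU_-$ and $W_-$ opposite to $\cV_+$, then composes two loxodromic elements: a first $\g_1$ (with $\g_1^+\in W_+$, $\g_1^-\in\cU_-$) to push $\cU^{(2)}$ into $W^{(2)}$, and a second $\g_2$ (with $\g_2^+\in\cV_+$, $\g_2^-\in W_-$) to push the result into $\cV^{(2)}$, yielding $g=\g_2\g_1^n$. The stated reason for the detour through $W^{(2)}$ is the possibility that $\cV_+$ and $\cU_-$ fail to be opposite.

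You bypass this entirely by exploiting the full strength of Lemma~\ref{lemme3.6-benoist1}, which gives density of $\{(h^+,h^-)\}$ in the full product $L_+(\G)\times L_-(\G)$, not merely in $L^{(2)}(\G)$. Hence a single loxodromic $h$ with $h^+$ near $v_+^0$ and $h^-$ near $u_-^0$ exists regardless of whether $(v_+^0,u_-^0)$ lie in general position, and the clever asymmetric choice $u_-^{(n)}=h^{-n}v_-^0$ lands the second coordinate exactly on the target while the first coordinate is attracted to $h^+$. Your careful handling of the semigroup setting (using only that $L_-$ is $\G^{-1}$-invariant, and taking $g=h^n\in\G$) and of the general-position checks is correct. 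The paper's two-step argument is not wrong, just longer; your route shows the intermediate set is unnecessary.
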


\begin{proof}
Whithout loss of generality, we assume that $\cU^{(2)}=\cU_+\times \cU_-$ and $\cV^{(2)}=\cV_+\times \cV_-$ where $\cU_+,\cV_+$ (resp. $\cU_-,\cV_-$) are open nonempty subsets of $L_+(\G)$ (resp. $L_-(\G)$).  

We choose an open set $W^{(2)}=W_+\times W_- \subset L^{(2)}(\G)$ so that $\cV_+$ and $W_-$ (resp. $W_+$ and $\cU_-$) are opposite. 
Such a choice is always possible.
If $\cV_+$ and $\cU_-$ are opposite, we can take $W^{(2)}=\cV^{(2)}$.
Otherwise, by taking $\cU^{(2)}$ and $\cV^{(2)}$ smaller, we can always assume that the subset of points in $L_+(\G)$ (resp. $L_-(\G)$) in general position with $\cU_-$ (resp. $\cV_+$) is non empty. Then we choose a suitable opposite pair of open nonempty subsets $W_+\times W_- \subset L_+(\G) \times L_-(\G)$.   

Since $W_+\times\cU_- \subset L^{(2)}(\G)$, then, by Lemma \ref{lemme3.6-benoist1}, there are loxodromic elements in $\G$ with attractive point in $W_+$ and repulsive point in $\cU_-$.
By Lemma \ref{lemme_carac_lox}, such a loxodromic element $\g_1$ contracts points that are in general position with $\g_1^-\in \cU_-$ towards $\g_1^+ \in W_+$.
Apply now statement (iv) of Lemma \ref{lemme_carac_lox}, to loxodromic element $\g_1$, with $W_-$ in general position with $\g_1^+$ and $U_-$ containing $\g_1^-$.
Hence for any $n$ large enough, $\g_1^n\cU^{(2)}\cap W^{(2)}\neq \emptyset$.

We take an open subset $\cW^{(2)}$ of $\g_1^n\cU^{(2)}\cap W^{(2)}$ of the form $\cW^{(2)}=\cW_+\times \cW_-$.
Then $\cV_+\times \cW_- \subset \cV_+ \times W_- \subset L^{(2)}(\G)$. 
Likewise, we choose a loxodromic element $\g_2 \in \G$ so that $\g_2 \cW^{(2)} \cap \cV^{(2)} \neq \emptyset $.
Then $$ \Big( \g_2\g_1^n\cU^{(2)}\cap \g_2W^{(2)} \Big) \cap \cV^{(2)} \supset \g_2 \cW^{(2)} \cap \cV^{(2)}\neq \emptyset.$$

Finally, the element $g= \g_2 \g_1^n$ satisfies $ g \cU^{(2)} \cap \cV^{(2)} \neq \emptyset$ 
\end{proof}

The theorem  below describes the set of directions $\theta \in \fa^+_1$ for which we will show that $\phi_t^\theta$ is topologically mixing. 

	\begin{theorem}[\cite{benoist1997proprietes}]\label{theorem_cone_limite}
We define the  \emph{limit cone} of $\G$ by, 	$\cC(\G):= \overline{ \underset{\g\in \G}{\bigcup} \R \lambda(\g)}$. We have 
	$$\cC(\G)=\underset{n\geq 1}{\bigcap} \overline{\underset{\g\in \G}{\underset{ \Vert \g\Vert\geq n}{\bigcup}} \R \mu(\g)}.$$  
	Moreover this set is closed, convex, of non-empty interior.
	\end{theorem}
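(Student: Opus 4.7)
The plan is to prove the three assertions in order: the equality of the two expressions for $\cC(\G)$, then closedness and convexity, and finally non-emptiness of the interior.

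For the equality of the two sets, the inclusion $\overline{\bigcup_{\g\in \G} \R \lambda(\g)} \subset \bigcap_{n\geq 1} \overline{\bigcup_{\Vert\g\Vert\geq n} \R \mu(\g)}$ is immediate from the spectral radius formula $\lambda(\g)=\lim_{k\to\infty} \frac{1}{k}\mu(\g^k)$, since $\Vert\g^k\Vert \to \infty$. For the reverse inclusion, take a sequence $\g_n \in \G$ with $\Vert\g_n\Vert \to \infty$ whose Cartan projections accumulate in some direction $v\in \fa^+$. The key idea is \emph{loxodromisation}: using Zariski density, fix two loxodromic elements $h_+,h_-\in \G$ in sufficiently generic position so that, for all large $n$, the element $h_+ \g_n h_-$ is itself loxodromic with attractive flag near $h_+^+$. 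Proposition \ref{prop - lemme 3.6 de Benoist ii } together with the bounded perturbation estimate of Lemma \ref{lemme_cartan_produit} then shows that $\lambda(h_+\g_n h_-)$ differs from $\mu(\g_n)$ by a bounded amount, so the normalised Jordan projections approach $v$ as well, placing $\R v$ in the left-hand side.

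Closedness is automatic from the definition. For convexity, pick two loxodromic elements $g_1,g_2 \in \G$ and use Zariski density to arrange, possibly after right-multiplying by Schottky auxiliaries, that $(g_1^+,g_2^-)$ and $(g_2^+,g_1^-)$ both lie in $\ccF^{(2)}(X)$. Proposition \ref{prop - lemme 3.6 de Benoist ii } then yields that $g_1^{n_1}g_2^{n_2} \in \G$ is loxodromic for all $n_1,n_2\geq 1$, with $\lambda(g_1^{n_1}g_2^{n_2})$ lying within a bounded distance of $n_1\lambda(g_1)+n_2\lambda(g_2)$. Letting $n_1,n_2 \to \infty$ with $n_1/(n_1+n_2) \to t \in [0,1]$, the corresponding rays accumulate on the ray through $t\lambda(g_1)+(1-t)\lambda(g_2)$, which therefore lies in $\cC(\G)$. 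Since $\cC(\G)$ is generated by such Jordan rays, this establishes convexity.

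Non-emptiness of the interior is the main obstacle. By convexity, it suffices to produce $r_G = \dim \fa$ loxodromic elements of $\G$ whose Jordan projections are linearly independent in $\fa$: their positive combinations then form a relatively open convex cone contained in $\cC(\G)$. The construction is inductive. Loxodromic elements form a non-empty Zariski open subset of $G$ (as $A^{++}M$ is non-empty and every element is conjugate to one in $\exp(\fa^+)M$ via Jordan decomposition), so Zariski density of $\G$ supplies a first $g_1 \in \G$. At stage $k<r_G$ with $g_1,\dots,g_k$ already chosen and spanning a proper subspace $H\subset\fa$, the crucial point is that the condition ``$\lambda(g)\in H$'' cuts out a proper Zariski closed subset of $G$; one re-expresses $\chi_{max,\alpha}(\lambda(g))$ via Lemma \ref{lemme_repr_cartan_jordan_iwasawa} as the logarithm of the spectral radius of $\rho_\alpha(g)$, which is algebraic up to sign on the locus of proximal elements. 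Zariski density of $\G$ then provides $g_{k+1}\in\G$ loxodromic with $\lambda(g_{k+1})\notin H$. Verifying this algebraic containment rigorously is the technical heart of the proof and is where Benoist's machinery is truly essential; everything else in the theorem is a reasonably direct consequence of the Schottky estimates of the previous section.
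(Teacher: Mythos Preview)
The paper does not give its own proof of Theorem \ref{theorem_cone_limite}; the result is simply quoted from Benoist \cite{benoist1997proprietes} and used as a black box. There is therefore no argument in the paper to compare against, and your sketch is essentially a summary of Benoist's original proof rather than an alternative to anything the authors wrote.

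On the content of your sketch, two remarks. First, in the reverse inclusion, your appeal to Proposition \ref{prop - lemme 3.6 de Benoist ii } is off: that proposition assumes all factors are already $(r,\varepsilon)$-loxodromic, whereas your $\g_n$ is an arbitrary element of $\G$ of large norm. Benoist's actual argument here needs a comparison of $\lambda(h_+\g_n h_-)$ with $\mu(\g_n)$ valid for general $\g_n$, which uses the contraction properties of $h_\pm$ directly and is not a special case of the Schottky estimate as stated in this paper. Second, for non-empty interior you are right that the algebraicity claim is the crux, but note that $\lambda$ itself is not algebraic; what is algebraic is a multiplicative relation among the dominant eigenvalues of the $\rho_\alpha(g)$, and turning ``$\lambda(g)\in H$'' into such a relation only works when $H$ is rational for the weight lattice. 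Benoist's actual argument routes through showing that no hyperplane can contain all the $\lambda(\g)$ by exploiting the action of the opposition involution together with a genericity argument on eigenvalues; your inductive scheme with a direct Zariski-closedness claim would need extra work to make rigorous.
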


	\subsection{Topological transitivity properties}

Recall the definition of the subset of non-wandering Weyl chambers $\widetilde{\Omega}(X)= \cH^{-1}(L^{(2)}(\G)\times \fa).$ 

\begin{prop}\label{prop_transitivité}
	Let $\theta \in \fa^{++}$.
	If the flow $(\Omega(X),\phi_\theta^t)$  is topologically transitive then $\theta \in \overset{\circ}{\cC}(\G)$.
\end{prop}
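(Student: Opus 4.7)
The plan is to argue by contrapositive: assuming $\theta \notin \overset{\circ}{\cC}(\G)$, I will exhibit nonempty open sets $\widetilde{U}, \widetilde{V} \subset \widetilde{\Omega}(X)$ that never meet under the combined $\G \times \phi^\theta_{\R_+}$-action, contradicting topological transitivity. Since $\cC(\G)$ is a closed convex cone with nonempty interior by Theorem~\ref{theorem_cone_limite}, the Hahn--Banach theorem furnishes a nonzero $\varphi \in \fa^*$ with $\varphi(\theta) \geq 0$ and $\varphi \leq 0$ on $\cC(\G)$. This $\varphi$ will drive the obstruction.

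Using Lemma~\ref{lemme3.6-benoist1}, I pick a loxodromic $\g_0 \in \G$, set $(\xi_0, \eta_0) := (\g_0^+, \g_0^-) \in L^{(2)}(\G)$, and take a small open neighborhood $O$ of $(\xi_0, \eta_0)$ in $L^{(2)}(\G)$. In Hopf coordinates (Proposition~\ref{coord_hopf}), I set $\widetilde{U} := O \times B_\fa(0, \delta)$ and $\widetilde{V} := O \times B_\fa(v_*, \delta)$, with $\delta > 0$ small and $v_* \in \fa$ chosen so that $\varphi(v_*)$ is large in the sign that cannot be reached by the group-and-flow action. If $\g \widetilde{U} \cap \phi_t^\theta \widetilde{V} \neq \emptyset$, then unpacking in Hopf coordinates yields $(\xi, \eta; v) \in \widetilde{U}$ and $(\xi', \eta'; v') \in \widetilde{V}$ with $\g\xi = \xi'$, $\g\eta = \eta'$, and the fiber identity
\[
\beta_{\g \xi}(\g o, o) = v_* + t\theta + O(\delta),
\]
which after applying $\varphi$ becomes a scalar relation linking $t$, $\varphi(v_*)$, and $\varphi$ of the cocycle.

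The analytic core is to control $\varphi(\beta_{\g \xi}(\g o, o))$ through the limit cone. The dynamical constraint $\xi, \g\xi \in O$ forces, for $\g$ of sufficiently large Cartan norm, the Cartan attracting direction of $\g$ to cluster near $\xi_0$ (and the repelling one near $\eta_0$), in particular keeping $\xi$ away from the Cartan repelling region of $\g$. Combining this with identity~(\ref{eq_Busemann_Iwasawa}), Lemma~\ref{lemme_repr_cartan_jordan_iwasawa}, and the proximality estimates of Lemmas~\ref{lemme_carac_lox}--\ref{lemme_rem_Cagri_lox} and Proposition~\ref{prop - lemme 3.6 de Benoist ii }, one obtains a uniform comparison $\beta_{\g \xi}(\g o, o) = \varepsilon_0\, \mu(\g) + O_O(1)$ for a fixed sign $\varepsilon_0 \in \{+1, -1\}$ dictated by the Hopf sign convention, with error depending only on $O$. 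By Theorem~\ref{theorem_cone_limite}, $\mu(\g) / \|\mu(\g)\|$ lies eventually within any preassigned neighborhood of $\cC(\G)$, so $\varphi \leq 0$ on $\cC(\G)$ yields $\varphi(\mu(\g)) \leq \varepsilon \|\mu(\g)\|$ for every preassigned $\varepsilon > 0$ once $\|\g\|$ is large. Applying $\varphi$ to the fiber identity, together with $\|\mu(\g)\| \asymp t$, then confines the achievable value $\varphi(v_*) + t\varphi(\theta)$ (up to bounded error) to the image of $\R_+\theta + \varepsilon_0 \cC(\G)$ under $\varphi$, which is a proper half-line of $\R$ precisely because $\theta \notin \overset{\circ}{\cC}(\G)$. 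Choosing $v_*$ with $|\varphi(v_*)|$ in the complementary half-line beyond the bounded error thus makes $\g\widetilde{U} \cap \phi_t^\theta \widetilde{V}$ empty for every admissible pair $(\g, t)$, contradicting topological transitivity.

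The step I expect to require the most care is this uniform Iwasawa--Cartan comparison on the constrained set: the Iwasawa cocycle $\sigma(\g, \xi)$ is only asymptotic to $\mu(\g)$ when $\xi$ stays quantitatively far from the Cartan repelling direction of $\g$, so the topological condition $\g\xi \in O$ must be converted into this geometric separation. The strategy is to show that sufficiently large $\g \in \G$ satisfying $\xi, \g\xi \in O$ are themselves loxodromic with $(\g^+, \g^-)$ close to $(\xi_0, \eta_0)$ --- placing them in the Schottky regime of Definition~\ref{defin_lox_schottky} --- and then to invoke Proposition~\ref{prop - lemme 3.6 de Benoist ii } to deliver the required estimates with constants depending only on $O$.
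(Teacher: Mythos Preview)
Your overall strategy---contrapositive via a separating functional $\varphi$ and a fiber identity in Hopf coordinates---is on the right track and is essentially dual to the paper's direct argument. However, there is a genuine gap in the step where you pass from $\mu(\g)$ to the limit cone. You write that $\mu(\g)/\|\mu(\g)\|$ eventually lies in any neighborhood of $\cC(\G)$, hence $\varphi(\mu(\g))\leq \varepsilon\|\mu(\g)\|\asymp \varepsilon t$. This is correct but \emph{not} enough: in the crucial boundary case $\varphi(\theta)=0$, your fiber identity gives $\varphi(\mu(\g))=\varphi(v_*)+O(1)$, and to contradict a large choice of $\varphi(v_*)$ you need a \emph{uniform} bound $\varphi(\mu(\g))\leq C$, not a sublinear one. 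The asymptotic inclusion of Cartan directions in $\cC(\G)$ does not prevent $d(\mu(\g),\cC(\G))$ from growing like $\sqrt{t}$, so your ``confinement to a proper half-line'' does not follow from the stated estimate. (Your claim that $\varphi(\R_+\theta+\varepsilon_0\cC(\G))$ is a proper half-line is also only correct when $\varphi(\theta)=0$; if $\theta\notin\cC(\G)$ and $\varphi(\theta)>0$, that set is all of $\R$.)

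The fix is exactly what the paper does: work with the \emph{Jordan} projection $\lambda(\g)$, which lies in $\cC(\G)$ on the nose, so that $\varphi(\lambda(\g))\leq 0$ with no error. The paper achieves this by a more explicit route than yours: from a dense orbit through $g_xM$ it writes $\g_n=g_n\varepsilon_n'^{-1}$ with $g_n=g_xe^{v+t_n\theta}m_n^{-1}g_x^{-1}$ explicitly loxodromic of Jordan projection $v+t_n\theta$, shows $\g_n$ is loxodromic as a small perturbation (Corollary~\ref{corollaire_produit_lox}), and then computes $\lambda(\g_n)$ via Fact~\ref{fait_lox} ($\lambda(h)=\sigma(h,h^+)$) together with Iwasawa--cocycle estimates that are uniform in $v$. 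This yields $\|\lambda(\g_n)-(v+t_n\theta)\|\leq C$ with $C$ independent of $v$, and then the supporting-hyperplane argument concludes. Your last paragraph gestures at establishing loxodromicity of the relevant $\g$ via Schottky-type dynamics on $O$; that would indeed give $\mu(\g)-\lambda(\g)=O_O(1)$ and repair the gap, but it is precisely the substantive work, and your main line (through $\mu$ and the asymptotic cone description) does not go through without it.
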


\begin{proof}
We assume that the dynamical system $(\Omega(X),\phi_\theta^t)$ is topologically transitive i.e. there exists a dense orbit.
Let $x\in\Omega(X)$ be a point of $\phi_\theta^t-$dense orbit and choose $g_x\in G$ a lift of $x$ in $G$. 

By density of $(\phi_\theta^t(x))_{x\in \R}$, for any $yM\in \widetilde{\Omega}(X)\subset G/M$, 
there exists $t_n\rightarrow +\infty$, $\varepsilon_n \rightarrow id_G$, $m_n\in M$ and $\g_n\in \G$ so that
$$\phi_\theta^{t_n}(g_x)=g_x e^{t_n \theta}=\g_n y \varepsilon_n m_n.$$ 

In particular, since the element $y=g_xe^{-v}$ belongs to $\widetilde{\Omega}(X)$ for all $v\in \fa$, there exists $t_n\in \R, \epsilon_n\tv \Id_G$, $m_n\in M$ and $\g_n\in\G$ such that 
\begin{equation}\label{eq4}
g_x e^{t_n \theta}=\g_n g_x e^{-v} \varepsilon_n m_n.
\end{equation}
For every $n\geq 1$ we set 
$\varepsilon_n':= g_x e^{-v} \varepsilon_n e^{v}g_x^{-1}$. 
The sequence $(\varepsilon_n')_{n\geq 1}$ converges towards $id_G$, and we have:

\begin{equation}\label{eq5}
g_x e^{v+t_n \theta} m_n^{-1}g_x^{-1}=\g_n \varepsilon_n'.
\end{equation}
For every $n\geq 1$ we set  $g_n:=g_x e^{v+t_n \theta} m_n^{-1}g_x^{-1}$.

Thanks to Lemma \ref{lemme_cartan_produit}, we deduce the following estimates
\begin{align*}
\mu(g_n) &\in v+ t_n \theta + C_{g_x}+C_{g_x^{-1}} \\
\mu(\g_n \varepsilon_n')  &\in \mu(\g_n)+ C_{\varepsilon_n'}.
\end{align*}
Therefore, $\mu(\g_n)$ is at bounded distance to $v+t_n\theta $ and  by Theorem \ref{theorem_cone_limite}, $\theta$ must lie in the (closed) limit cone. We now show that $\theta$ cannot be in its boundary. For this we need to study more carefully the Jordan projection of $g_n\epsilon_n'^{-1}$. \\

By definition, $\lambda(\g_n)$ belongs to the limit cone. By computing $\lambda(g_n \epsilon_n'^{-1})$, we will show that $\lambda(\g_n)$ also lies in a uniform (with respect to $v$) neighborhood of $v+t_n\theta$.  Finally, choosing $v$ far enough will force $\theta$ to be in the interior of the limit cone. \\

First of all, let us show that $g_n \epsilon_n'^{-1}$ is a loxodromic element. 
Since by hypothesis $\theta$ is in the interior of the Weyl chamber $\fa^+$,
there exists $n_0\in \N$ so that for $n\geq n_0$ large enough, $\lambda(g_n)=v+t_n \theta \in \fa^{++}$.
Hence $g_n$ is loxodromic and $(g_n^+,g_n^-)=(g_x \eta_0,g_x \check{\eta}_0)$ for all $n\geq n_0$.
We choose $0<r\leq \frac{1}{7} d(g_x \eta_0,g_x \check{\eta}_0) $.

We apply Lemma \ref{lemme_rem_Cagri_lox} on the loxodromic elements $ \big(g_x e^\theta g_x^{-1} \big)^k $.
There is a sequence of $\rho_k \rightarrow 0$ so that $ \big(g_x e^\theta g_x^{-1} \big)^k $ is $(r,\rho_k)-$loxodromic. 
Then for any $n\geq n_0$, $g_n$ is the product of a $(r,\rho_{k_n})-$loxodromic element and a loxodromic element of the form $g_x e^{v_n}m_n^{-1}g_x^{-1}$, where $v_n \in \fa^+$ is bounded, and with $k_n \rightarrow + \infty$. 
Since $g_x e^{v_n}m_n^{-1}g_x^{-1}$ and $\big(g_x e^\theta g_x^{-1} \big)^k $ have the same attractive and repulsive point in $\mathcal{F}(X)$, we deduce that $g_n$ is $(r,\rho_{k_n})-$loxodromic for $n\geq n_0$. 
Take now $\rho_n'=\max(\rho_n,\frac{1}{2}\Vert \varepsilon'_n -id_G\Vert)$.
Then there exists $n_1$ so that for $n\geq \max(n_0,n_1)$, then $0<\rho'_n \leq r$, and $g_n$ is $(r,\rho'_n)-$loxodromic.  
Corollary \ref{corollaire_produit_lox} shows that $g_n \varepsilon_n'^{-1}$ is $(2r, 2\rho'_n)-$loxodromic for $n$ large enough, and $(g_n \varepsilon_n'^{-1})^+ \in B(g_x\eta_0,\rho'_n)$.

Using Fact \ref{fait_lox}, we compute $\lambda(g_n\varepsilon_n'^{-1})$:
\begin{align*}
\lambda(g_n \varepsilon_n'^{-1})&=\sigma(g_n \varepsilon_n'^{-1},(g_n \varepsilon_n'^{-1})^+)\\
&= \sigma(g_n, \varepsilon_n'^{-1} (g_n \varepsilon_n'^{-1})^+)+ \sigma(\varepsilon_n'^{-1} ,(g_n \varepsilon_n'^{-1})^+) \\
&= \sigma(g_n, g_x\eta_0)  \\
& \quad \quad \quad +\Big( \sigma\big(g_n, \varepsilon_n'^{-1} (g_n \varepsilon_n'^{-1})^+\big)
-\sigma(g_n,g_x\eta_0) \Big) \\
&\quad \quad \quad + \sigma(\varepsilon_n'^{-1} ,(g_n \varepsilon_n'^{-1})^+) .\\ 
\end{align*}
Remark that,
$\sigma(g_n, g_x \eta_0)=\lambda(g_n)=v+t_n \theta. $
hence
\begin{equation}
\lambda(g_n \varepsilon_n'^{-1})-(v+t_n\theta)= \Big( \sigma\big(g_n, \varepsilon_n'^{-1} (g_n \varepsilon_n'^{-1})^+\big)
-\sigma(g_n,g_x\eta_0) \Big) + \sigma(\varepsilon_n'^{-1} ,(g_n \varepsilon_n'^{-1})^+).
\end{equation}

We analyze separately the two terms of the right hand side of the last equality. 

For the last term, by Lemma \ref{lemme_cartan_produit} (ii) 
$$\Vert \sigma(\varepsilon_n'^{-1} ,(g_n \varepsilon_n'^{-1})^+) \Vert \leq  C_{\varepsilon_n'} .$$ 

Now we will bound, independently of $v$, the term $\sigma(g_n, \varepsilon_n'^{-1} (g_n \varepsilon_n'^{-1})^+)
-\sigma(g_n,g_x \eta_0)$.

Let $\alpha\in \Pi$ be a simple root and consider the proximal representation of $G$ associated to $\alpha$.
By Lemma \ref{lemme_repr_cartan_jordan_iwasawa} (b)(iii), for any $\xi \in \varepsilon_n'^{-1} B(g_x \eta_0,2\rho'_n)$, there exists a non zero vector $v_\xi \in V_\alpha$ so that 
$$\chi_{\max,\alpha}(\sigma(g_n,\xi))=\log \frac{\Vert \rho_\alpha (g_n)v_\xi \Vert }{\Vert v_\xi \Vert}.$$
Let $\xi= \varepsilon_n'^{-1} (g_n \varepsilon_n'^{-1})^+$ and consider a unitary vector $v_\xi\in V_\alpha$.
Since $\xi$ is in a $3\rho'_n-$neighbourhood of $g_n^+$, we write
$v_\xi=v_+ + v_<$  where $v_+ \in V_{+}(\rho_\alpha( g_n))$ and $v_<\in V_{-}(\rho_\alpha( g_n))$ with $\Vert v_+ \Vert>1- 3\rho'_n $.
Then
$$\frac{\rho_\alpha(g_n)}{\lambda_1(\rho_\alpha(g_n))}(v_\xi)
= v_+ + \frac{\rho_\alpha(g_n)}{\lambda_1(\rho_\alpha(g_n))}(v_<) $$ 
By the triangle inequality,
$$
\Vert v_+ \Vert - \Big\Vert \frac{\rho_\alpha(g_n)}{\lambda_1(\rho_\alpha(g_n))}(v_<) \Big\Vert
\leq 
\frac{\Vert \rho_\alpha(g_n) v_\xi \Vert }{\lambda_1(\rho_\alpha(g_n))} 
\leq 
\Vert v_+\Vert +
\Big\Vert \frac{\rho_\alpha(g_n)}{\lambda_1(\rho_\alpha(g_n))}(v_<) \Big\Vert.
$$
Hence 
$$
1-3\rho'_n - \Big\Vert \frac{\rho_\alpha(g_n)}{\lambda_1(\rho_\alpha(g_n))}(v_<) \Big\Vert
\leq 
\frac{\Vert \rho_\alpha(g_n) v_\xi \Vert }{\lambda_1(\rho_\alpha(g_n))} 
\leq 
1 +
\Big\Vert \frac{\rho_\alpha(g_n)}{\lambda_1(\rho_\alpha(g_n))}(v_<) \Big\Vert.
$$

The eigenvalues of $ \frac{\rho_\alpha(g_n)}{\lambda_1(\rho_\alpha(g_n))}$ restricted to $X_-(g_n)$ are 
$\exp( \chi_\alpha (\lambda(g_n))-\chi_{max, \alpha}(\lambda(g_n))  )$, where $\chi_\alpha\neq \chi_{max,\alpha}$ is a restricted weight of $\Sigma(\rho_\alpha)$.
They converge to zero and these endomorphisms are all diagonalisable.
Hence, 
$$\Big\Vert \frac{\rho_\alpha(g_n)_{\vert X_-(g_n)}}{\lambda_1(\rho_\alpha(g_n))}\Big\Vert \underset{n +\infty}{\rightarrow}0.$$

Taking the logarithm and the upper bound of $\frac{\Vert \rho_\alpha(g_n) v_\xi \Vert }{\lambda_1(\rho_\alpha(g_n))} $ and its inverse, we obtain for $n$ large enough,
$$\Vert \sigma(g_n, \xi)
-\sigma(g_n,g_x \eta_0) \Vert \leq 3 \rho'_n + \sup_{\alpha\in \Pi}\Big\Vert \frac{\rho_\alpha(g_n)_{\vert X_-(g_n)}}{\lambda_1(\rho_\alpha(g_n))}\Big\Vert. $$

Finally, for any $v\in \fa$, there exists $t_n \rightarrow +\infty$, $\varepsilon_n' \rightarrow id_G$, so that for any $n$ large enough,
\begin{equation}\label{eq_trans}
\Vert \lambda(\g_n) - (v+t_n \theta) \Vert \leq  3 \rho'_n + \sup_{\alpha\in \Pi}\Big\Vert \frac{\rho_\alpha(g_n)_{\vert X_-(g_n)}}{\lambda_1(\rho_\alpha(g_n))}\Big\Vert + C_{\varepsilon'_n} .
\end{equation}
The three terms converge to zero when $n\rightarrow +\infty$, so that, for $n$ large enough, $\lambda(\g_n)-(v+t_n \theta)$ is uniformly bounded.

To conclude, recall that the limit cone is the smallest closed cone containing all the Jordan projections of $\G$.
Hence, this implies that for $n$ large enough, the distance $d(v+t_n\theta, \cC(\G))$ is uniformly bounded. 
Now, assume by contradiction that $\theta$ is in the boundary of $\cC(\G)$.
Let $H$ be a supporting hyperplane of the convex $\cC(\G)$ at $\theta$ and $H^+$ the half space not containing $\cC(\G)$. 
Pick $v\in H^+$, whose  distance to $\cC(\G)$ is large. 
Then $d(v+\R^+\theta,\cC(\G)) = d(v+\R^+\theta,H) = d(v,H)$ is also large. 
This is contradictory with inequality (\ref{eq_trans}).

Hence, topological transitivity of the dynamical system $(\Omega(X),\phi_\theta^t)$ implies that $\theta \in \overset{\circ}{\cC}(\G)$.
\end{proof}

\section{Topological mixing}
	Recall the definition of topological mixing.
		\begin{definition}\label{definition_topological_mixing}
Fix a direction $\theta\in \fa_1^{++}$.
		The Weyl chamber flow $\phi_\R^{\theta}$ is \emph{topologically mixing} on $\Omega(X)$ if for all open subsets $U,V \subset \Omega(X)$, there exists $T>0$ such that for all $t\geq T$, we have $U\cap \phi_t^\theta (V)\neq \emptyset $.
		\end{definition}
It will be sometimes more convenient to make proofs in the cover $	 \widetilde{\Omega}(X)$, where the topological mixing takes  the following form : for all open subsets $\widetilde{U},\widetilde{V}\subset \widetilde{\Omega}(X)$, there exists $T>0$ such that for all $t\geq T$ there exists $\g_t \in \G$ with $\g_t \widetilde{U} \cap \phi_t^\theta ( \widetilde{V})\neq \emptyset $.

%
%

	\subsection{Non-arithmetic spectrum}
Denote by $\G^{lox}$ the set of loxodromic elements of $\G$. 
In \cite{dalbo2000feuilletage} Dal'bo introduced the notion of \emph{non-arithmetic spectrum } for subgroup of $Isom(H^n)$, meaning that the length spectrum of such a group is not contained in a discrete subgroup of $\R$. 

We generalize this definition for isometry group in higher rank:
\begin{definition}
We say that $\G$ has  \emph{non-arithmetic spectrum} if the length spectrum $\lambda(\G^{lox})$ spans a dense subgroup of $\fa$.   
\end{definition}

\begin{prop}\label{prop_benoistII}
Every Zariski dense semigroup $\G$ contains loxodromic elements, strong $(r,\varepsilon)$-Schottky Zariski dense semigroups and has non-arithmetic spectrum.
\end{prop}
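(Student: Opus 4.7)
The plan is to prove the three assertions in sequence, using Zariski density and Proposition \ref{prop - lemme 3.6 de Benoist ii } as the main tools.

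For the first assertion, I would observe that being loxodromic is equivalent (via Definition \ref{defin_lox_schottky} and Lemma \ref{lemme_repr_cartan_jordan_iwasawa}) to proximality of the image under each representation $\rho_\alpha$, $\alpha \in \Pi$. Proximality is a Zariski open condition in $\mathrm{End}(V_\alpha)$, expressed via the non-vanishing of a suitable resultant/discriminant. Since proximal elements exist in each $\rho_\alpha(G)$ (for instance, as images of $\exp(v)$ for $v \in \fa^{++}$), the set of non-loxodromic elements of $G$ is a proper Zariski closed subset. Zariski density of $\G$ forces $\G$ to contain loxodromic elements.

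For the second assertion, I would choose loxodromic elements $g_1,\ldots,g_k \in \G$ generating a Zariski dense sub-semigroup and whose attractive/repulsive asymptotic Weyl chambers are in sufficiently general position, namely $d(y(g_i^+),H(g_j^-)) \geq 7r$ for some $r>0$ and all $i,j$. Both conditions are generic: the Zariski density fails only on proper algebraic subvarieties of $\G^k$ (existence of such a tuple comes from iterated use of the previous step), and the opposition condition defines a Zariski open subset of $k$-tuples of limit configurations. Apply Lemma \ref{lemme_rem_Cagri_lox} to each $g_i$ to get exponents $N_i$ with $g_i^{N_i}$ being $(r/2,\varepsilon/2)$-loxodromic. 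The paragraph following Proposition \ref{prop - lemme 3.6 de Benoist ii } shows that $\G' := \langle g_1^{N_1},\ldots,g_k^{N_k}\rangle$ is strongly $(r,\varepsilon)$-Schottky, and its Zariski closure contains the Zariski closure of each cyclic sub-semigroup $\langle g_i^{N_i}\rangle$, which coincides with that of $\langle g_i\rangle$; by suitable choice of the $g_i$ this forces the Zariski closure of $\G'$ to equal $G$.

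For the non-arithmetic spectrum, I would use the Schottky Zariski dense sub-semigroup $\G'$. By Proposition \ref{prop - lemme 3.6 de Benoist ii }, any cyclically reduced word $w = g_{i_l}^{N_{i_l}m_l}\cdots g_{i_1}^{N_{i_1}m_1}$ with $g_{i_{j+1}}\neq g_{i_j}$ is loxodromic with
\[
\lambda(w) = \sum_{j=1}^{l} N_{i_j}m_j\,\lambda(g_{i_j}) + \nu\bigl(g_{i_l}^{N_{i_l}},\ldots,g_{i_1}^{N_{i_1}}\bigr) + \Delta, \qquad \|\Delta\| \leq l\,C_{r,\varepsilon}.
\]
Comparing $\lambda(w)$ with $\lambda(w')$ for $w'$ obtained from $w$ by cyclically permuting or swapping adjacent letters produces elements of the subgroup $\langle \lambda(\G^{\mathrm{lox}})\rangle \subset \fa$ that, up to error of size $O(lC_{r,\varepsilon})$, reduce to differences of $\nu$-terms. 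Since $\lim_{\varepsilon\to 0}C_{r,\varepsilon}=0$, shrinking $\varepsilon$ approximates these differences arbitrarily well. Zariski density of $\G'$ will imply that the $\nu$-terms, as $(k,l)$ vary, explore a subset of $\fa$ that is not contained in any proper closed subgroup. The appendix density lemma for subgroups of $\R^n$ would then allow me to upgrade this non-confinement, combined with the approximation by $\lambda$-differences, into actual density of $\langle \lambda(\G^{\mathrm{lox}})\rangle$ in $\fa$.

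The main obstacle is clearly the last step: translating Zariski density of $\G$ into density of the subgroup generated by $\lambda(\G^{\mathrm{lox}})$. The delicate point is that the $\nu$-terms are determined by the eigenlines and angular parts of loxodromic elements rather than by their Jordan projections, so one must extract genericity in these auxiliary data from Zariski density. This is the genuine content of Benoist's non-arithmeticity theorem, and the appendix density lemma plays the role of the bridge from ``$\nu$-terms fill no proper closed subgroup'' to ``$\langle\lambda(\G^{\mathrm{lox}})\rangle$ is dense in $\fa$.''
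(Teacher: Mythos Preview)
Your proposal takes a genuinely different route from the paper. The paper's proof of Proposition \ref{prop_benoistII} is essentially a citation: existence of loxodromic elements and of Zariski dense strong $(r,\varepsilon)$-Schottky sub-semigroups is quoted from \cite[Proposition 4.3]{benoist1997proprietes}, and non-arithmeticity of the spectrum is quoted from \cite[Proposition 0]{benoist2000proprietes}. No independent argument is given. You instead attempt to reconstruct proofs of these results from the tools assembled in the paper.

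For the first two assertions your sketches are reasonable and close in spirit to Benoist's original arguments. One point that deserves more care in (2) is the claim that the Zariski closure of $\langle g_i^{N_i}\rangle$ coincides with that of $\langle g_i\rangle$, and that a suitable choice of the $g_i$ forces the closure of $\G'$ to be all of $G$; this is true but uses connectedness of $G$ and the structure of algebraic groups, and is exactly what Benoist's Proposition 4.3 establishes.

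The genuine gap is in the third assertion. You correctly identify that this is the hard step, but your proposed mechanism does not work as stated. The appendix Lemma \ref{lemme_densité} takes as \emph{input} a set already known to span a dense subgroup and extracts from it a finite subset with the same property; it cannot be used to \emph{establish} density in the first place. Your argument never produces an actual element of $\langle \lambda(\G^{\mathrm{lox}})\rangle$ that is small and nonzero: the comparisons you suggest between $\lambda(w)$ and $\lambda(w')$ only yield equalities up to an error $O(lC_{r,\varepsilon})$, and shrinking $\varepsilon$ requires changing the generators and hence the words, so you are not approximating a fixed element of the subgroup. Benoist's actual proof of non-arithmeticity is considerably more delicate: it exploits the algebraic nature of the $\nu$-terms (they are logarithms of absolute values of regular functions on $G$) together with Zariski density to show that if $\langle \lambda(\G)\rangle$ were contained in a proper closed subgroup of $\fa$, one would obtain a nontrivial algebraic constraint satisfied by all of $G$, a contradiction. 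This is precisely the content of \cite[Proposition 0]{benoist2000proprietes}, and the paper is right to invoke it rather than reprove it.
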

\begin{proof}
For a general semisimple, connected, real linear Lie group, Benoist proves in \cite[Proposition 0]{benoist2000proprietes} that when $\G$ is a Zariski dense semigroup of $G$, then the additive group generated by the full length spectrum $\lambda(\G)$ is dense in $\fa$.
Thus, this Proposition implies that  Zariski dense semigroups containing only loxodromic elements have non-arithmetic length spectrum. 
In particular, strong $(r,\varepsilon)$-Schottky Zariski dense semigroups have  non-arithmetic length spectrum. 
Finally, the existence of Zariski dense Schottky semigroups in Zariski dense subgroups of $G$ follows from \cite[Proposition 4.3 for $\theta=\Pi$]{benoist1997proprietes}.
\end{proof}

Prasad and Rapinchuk \cite[Theorem 2]{prasad2005zariski} prove that every Zariski dense semigroup of $G$ contains a finite subset $F$ such that $\lambda(F)$ generates a dense subgroup of $\fa$.

\subsection{A key proposition for mixing}
The following proposition is the technical point for proving the topological mixing of the Weyl chamber flow. Roughly, it shows that among elements of $\G$ which do not move too much a flat, (ie. $(\g_t^+,\g_t^-)\in \mathcal{U}^{(2)} $) for any given $x\in \fa$, we can find an element which send $0$ to $x+\theta t$ for large $t$   (ie.   $\lambda(\g_t)\in B(x+t \theta,\eta) $)

\begin{prop}\label{prop-moving along fa}
Fix $\theta\in \fa_1^{++}$ in the interior of the limit cone $\mathcal{C}(\G)$.

Then for every nonempty open subset $\mathcal{U}^{(2)}\subset L^{(2)}(\G)$, for all $x\in \fa$ and $\eta >0$ there exists $T>0$ such that for all $t\geq T$ there exists a loxodromic element $\g_t \in \G$ with
\begin{equation}\label{bouger selon fa}
  \left\{
      \begin{aligned}
        (\g_t^+,\g_t^-)\in \mathcal{U}^{(2)} \\
        \lambda(\g_t)\in B(x+t \theta,\eta)  \\
      \end{aligned}
    \right.
\end{equation}  
\end{prop}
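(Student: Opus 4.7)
The plan is to build $\g_t$ as a sandwich of the form $\g_0^N \, h_1^{n_1(t)} \cdots h_D^{n_D(t)} \, \g_0^N$, where $\g_0$ is a fixed loxodromic element that controls the limit points (first constraint) and the $h_i$ are fixed loxodromic elements chosen so that integer combinations of their Jordan projections, combined with positive multiples of $\lambda(\g_0)$, can approximate $x + t\theta$ (second constraint). Using Lemma \ref{lemme3.6-benoist1}, pick $\g_0 \in \G^{lox}$ with $(\g_0^+,\g_0^-)$ lying in a compact subset of $\mathcal{U}^{(2)}$.

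Next, since $\theta \in \overset{\circ}{\cC}(\G)$ and since by Proposition \ref{prop_benoistII} the additive subgroup generated by $\lambda(\G^{lox})$ is dense in $\fa$, we can select finitely many loxodromic elements $h_1,\dots,h_D \in \G$ such that the convex cone $\sum \R_+ \lambda(h_i)$ contains an open neighbourhood of $\theta$, and such that $\lambda(h_1),\dots,\lambda(h_D)$ generate a dense subgroup $\Lambda$ of $\fa$. After replacing all chosen elements by sufficiently high powers (Lemma \ref{lemme_rem_Cagri_lox}), and invoking the construction at the end of Section 3.3, we may assume that $\{\g_0, h_1,\dots,h_D\}$ is in strong $(r,\varepsilon)$-Schottky configuration, with $\varepsilon$ small enough that any $\varepsilon$-neighbourhood of $(\g_0^+,\g_0^-)$ sits inside $\mathcal{U}^{(2)}$.

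Applying Proposition \ref{prop - lemme 3.6 de Benoist ii} to the sandwich product $\g(N,\underline{n}):=\g_0^N h_1^{n_1}\cdots h_D^{n_D}\g_0^N$ yields, for all $N\geq 1$ and $n_1,\dots,n_D\geq 1$, a loxodromic element with
\begin{equation*}
\lambda(\g(N,\underline{n})) \in 2N\lambda(\g_0) + \sum_{i=1}^D n_i\lambda(h_i) + \nu + B_\fa(0, C_{r,\varepsilon}(D+2)),
\end{equation*}
and with $(\g(N,\underline{n})^+, \g(N,\underline{n})^-)$ in an $\varepsilon$-neighbourhood of $(\g_0^+,\g_0^-) \subset \mathcal{U}^{(2)}$. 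Fix $N$ once and for all, large enough to cover the Schottky requirements and to satisfy $C_{r,\varepsilon}(D+2) < \eta/2$ (by choosing $\varepsilon$ small).

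It remains to solve, for each $t$ large, the Diophantine problem
\begin{equation*}
\sum_{i=1}^D n_i(t)\,\lambda(h_i)\; \in\; B_\fa\bigl(x + t\theta - 2N\lambda(\g_0) - \nu,\; \eta/2\bigr),\qquad n_i(t)\in\N_{\geq 1}.
\end{equation*}
This is precisely what the appendix density lemma provides: since $\Lambda$ is dense in $\fa$, and since $\theta$ lies in the interior of the convex cone generated by $\lambda(h_1),\dots,\lambda(h_D)$, for $t$ larger than some threshold $T$ one can realise any target of the form $x + t\theta + O(1)$ by a positive-integer combination of the $\lambda(h_i)$ up to error $\eta/2$. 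Combining the two bounds yields $\lambda(\g_t) \in B(x+t\theta,\eta)$, completing the proof.

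The main obstacle is the last step: the interplay between the \emph{positivity} constraint ($n_i\geq 1$, so that we remain in the Schottky framework and the sandwich formula applies) and the \emph{density} requirement ($\Lambda$ dense, to approximate arbitrary targets in $\fa$). This is exactly why the non-arithmeticity of the length spectrum (Proposition \ref{prop_benoistII}), the openness of the limit cone at $\theta$, and a dedicated density lemma on subgroups of $\R^n$ are all needed simultaneously.
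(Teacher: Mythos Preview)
Your overall strategy — a sandwich $\g_0^N h_1^{n_1}\cdots h_D^{n_D}\g_0^N$, with $\g_0$ controlling the limit points and the $h_i$ providing a positive-integer lattice in $\fa$ — is exactly the paper's. But the step ``after replacing all chosen elements by sufficiently high powers \dots\ we may assume that $\{\g_0,h_1,\dots,h_D\}$ is in strong $(r,\varepsilon)$-Schottky configuration'' hides a genuine gap.

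Taking high powers makes each $h_i$ individually $(r,\varepsilon)$-loxodromic (Lemma~\ref{lemme_rem_Cagri_lox}), but it does \emph{not} change the attractive and repulsive points $h_i^{\pm}$, and the ``construction at the end of Section~3.3'' you invoke requires as a hypothesis the mutual separation $d\big(y(h^+),H({h'}^-)\big)\geq 7r$ for all pairs. Your $h_i$ were selected purely for properties of their Jordan projections (cone containing $\theta$, density of $\Lambda$); nothing guarantees their limit points are even in pairwise general position with each other or with $\g_0^{\pm}$. A related issue: Proposition~\ref{prop_benoistII} only says the \emph{full} spectrum $\lambda(\G^{lox})$ generates a dense subgroup; it does not give you a finite subset doing so. If you weaken to ``$\Lambda$ is $\eta/2$-dense'' (which finitely many elements do achieve), then passing to powers $h_i\mapsto h_i^{k}$ replaces $\Lambda$ by $k\Lambda$, destroying $\eta/2$-density. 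So there is a circularity between ``choose $h_i$ for density'' and ``take high powers for Schottky''.

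The paper breaks this circularity by reversing the order. First (Lemma~\ref{lemme_schottky_theta}) one fixes a finite set $S$ of $r_G$ elements whose powers $S^n$ generate a Zariski-dense strong $(r,\varepsilon_n)$-Schottky semigroup $\G_n$, with $\theta$ in the interior of $\sum_{\g\in S}\R_+\lambda(\g)$. Then, since $\G_n$ is itself Zariski dense, Proposition~\ref{prop_benoistII} applies to $\G_n$, and the appendix density Lemma~\ref{lemme_densité} extracts at most $2r_G$ further elements $F\subset\G_n$ so that $\lambda(S^n\cup F)$ is $\eta/2$-dense. Because $F$ lives \emph{inside} the Schottky semigroup $\G_n$, its elements are automatically $(r,\varepsilon_n)$-loxodromic and in Schottky configuration with everything else --- no further powers needed, so density is not degraded. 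The bookkeeping payoff is that $|S^n\cup F|\leq 3r_G$ is bounded independently of $n$ and $\eta$, which is exactly what allows one to choose $\varepsilon_n$ small enough that $(3r_G+2)C_{r,\varepsilon_n}\leq\eta/2$.
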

We will need the following classical density lemma, see for example \cite[Lemma 6.2]{benoist2000proprietes}. \begin{lemme}\label{lemme_densite_cone}
Let $V$ be a real vector space of finite dimension. 
Let $l_0,l_1,...,l_t$ be vectors of $V$ and $\eta>0$. Set $$ L:= \underset{0\leq i \leq t}{\sum} \R_+ l_i, \; 
M:= \underset{0\leq i \leq t}{\sum} \mathbb{Z} l_i, \; \text{and}\;
M_+:= \underset{0\leq i \leq t}{\sum} \mathbb{N} l_i. $$
Assume that  $M$ is $\eta$-dense in $V$.
Then there exists $v_0\in V$ such that $M_+$ is $\eta$-dense in $v_0+L$.
\end{lemme}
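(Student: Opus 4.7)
The plan is to lift everything through the surjective linear map $\pi\colon\R^{t+1}\to V$ defined by $\pi(\vec n):=\sum_{i=0}^{t} n_i l_i$ (surjective because the $l_i$ span $V$, which follows from $M$ being $\eta$-dense), and to set $K_\R:=\ker\pi$. The key preliminary observation is the identity $\pi^{-1}(M)=\Z^{t+1}+K_\R$, which tells us that density of $M$ in $V$ is equivalent to density of the image of $K_\R$ in the torus $\mathbb T^{t+1}:=\R^{t+1}/\Z^{t+1}$ — and this is precisely what the $\eta$-density hypothesis effectively delivers in the small-$\eta$ regime of interest.

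Set $\delta:=\eta/\|\pi\|_{\infty\to V}$ (with $\|\pi\|_{\infty\to V}\le\sum_i\|l_i\|$). By compactness of $\mathbb T^{t+1}$, there exists $R_0>0$ such that the image of $B^{\infty}_{K_\R}(0,R_0):=\{\vec k\in K_\R:\|\vec k\|_\infty\le R_0\}$ is $\delta$-dense in $\mathbb T^{t+1}$: cover $\mathbb T^{t+1}$ by finitely many $\delta/2$-balls, pick a $K_\R$-representative near each center, and let $R_0$ be the maximum $\ell^\infty$-norm of those finitely many representatives. I would then define
\[
v_0:=R_0\sum_{i=0}^{t}l_i=\pi(R_0\vec 1),\qquad \vec 1:=(1,\dots,1)\in\R^{t+1}.
\]

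Given $w\in v_0+L$, write $w=v_0+u$ with $u\in L$, lift $u$ to $\tilde u\in\R_+^{t+1}$, and set $\tilde w:=R_0\vec 1+\tilde u$, so $\pi(\tilde w)=w$ and $\tilde w_i\ge R_0$ for every $i$. By the density step, pick $\vec k\in B^{\infty}_{K_\R}(0,R_0)$ with $\vec k\equiv\tilde w\pmod{\Z^{t+1}}$ up to $\ell^\infty$-error at most $\delta$, and set $\vec n:=\mathrm{round}(\tilde w-\vec k)\in\Z^{t+1}$ component-wise. Then $\|\vec n-(\tilde w-\vec k)\|_\infty\le\delta$, and $\tilde w_i-\vec k_i\ge R_0-R_0=0$ forces $\vec n_i\ge 0$, so $\vec n\in\N^{t+1}$; using $\pi(\vec k)=0$,
\[
\|\pi(\vec n)-w\|_V=\|\pi(\vec n-\tilde w+\vec k)\|_V\le\|\pi\|_{\infty\to V}\,\delta\le\eta,
\]
so $\pi(\vec n)\in M_+$ $\eta$-approximates $w$, as required. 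The main obstacle is reconciling two competing demands on $\vec k$: on the torus side it must sit close to $\tilde w\bmod\Z^{t+1}$ (to keep $\vec n$ nearly equal to $\tilde w-\vec k$ in the $\pi$-sense), while on the cone side no $\vec k_i$ may exceed $\tilde w_i$ (to keep $\vec n$ non-negative). The pair $(R_0,v_0)$ is exactly what reconciles them — bounding $\|\vec k\|_\infty\le R_0$ still yields a $\delta$-dense torus image by compactness, and the shift $v_0=\pi(R_0\vec 1)$ ensures $\tilde w_i\ge R_0\ge\vec k_i$ automatically.
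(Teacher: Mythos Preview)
There is a genuine gap in your density step. You claim that the image of $K_\R$ in $\mathbb{T}^{t+1}$ is $\delta$-dense for $\delta=\eta/\|\pi\|_{\infty\to V}$, but the hypothesis only says that $M=\pi(\Z^{t+1})$ is $\eta$-dense in $V$, and this is \emph{not} the same thing. What $\eta$-density of $M$ in $V$ buys you upstairs is that $K_\R+\Z^{t+1}$ is $\delta'$-dense in $\R^{t+1}$ with $\delta'=\|\bar\pi^{-1}\|\,\eta$, where $\bar\pi\colon\R^{t+1}/K_\R\to V$ is the induced isomorphism; the ratio $\delta'/\delta=\|\pi\|\cdot\|\bar\pi^{-1}\|$ is the condition number of $\bar\pi$ and can be arbitrarily large. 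Concretely, take $V=\R^2$, $l_0=(1,0)$, $l_1=(0,1)$, $l_2=(100,0)$, $\eta=1$. Then $M=\Z^2$ is $\eta$-dense, $K_\R=\R\,(100,0,-1)$, and $\|\pi\|\approx 101$, so $\delta\approx 1/101$. But every point of $K_\R+\Z^3$ has second coordinate in $\Z$, so $(0,\tfrac12,0)\in\mathbb{T}^3$ is at $\ell^\infty$-distance $\tfrac12$ from the image of $K_\R$: your ``pick a $K_\R$-representative near each center'' step simply fails. Feeding the correct $\delta'$ back into your final estimate yields only $\|\pi\|\,\delta'$-density, not $\eta$-density.

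The fix is to skip the torus detour and use $\eta$-density in $V$ directly. By compactness of the zonotope $Z:=\pi([0,1]^{t+1})\subset V$, pick finitely many $\vec m_1,\dots,\vec m_s\in\Z^{t+1}$ with $\{\pi(\vec m_j)\}$ an $\eta$-net of $Z$, and set $R_0:=\max_j\|\vec m_j\|_\infty\in\N$ and $v_0:=\pi(R_0\vec 1)$. For $w\in v_0+L$, lift as you do to $\tilde w$ with $\tilde w_i\ge R_0$, write $\tilde w=\lfloor\tilde w\rfloor+\{\tilde w\}$, pick $j$ with $\|\pi(\{\tilde w\})-\pi(\vec m_j)\|\le\eta$, and set $\vec n:=\lfloor\tilde w\rfloor+\vec m_j$. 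Then $\pi(\vec n)-w=\pi(\vec m_j-\{\tilde w\})$ has norm at most $\eta$, and $\vec n_i\ge R_0-R_0=0$. This is essentially your architecture (choose $R_0$ by a compactness/finiteness argument, shift by $R_0\vec 1$ to guarantee non-negativity), but with the approximation done in $V$ rather than in the torus --- which is exactly where the hypothesis lives. The paper itself does not give a proof of this lemma; it cites \cite[Lemma~6.2]{benoist2000proprietes}.
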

Remark that if $M_+$ is $\eta$-dense in $v_0+L$ then it is $\eta$-dense in $v+L$ for every $v\in v_0+L$.

The following lemma is a consequence of \cite[Proposition 4.3]{benoist1997proprietes}.
	\begin{lemme}\label{lemme_schottky_theta}
	For all $\theta$ in the interior of the limit cone $\mathcal{C}(\G)$, there exists a finite set $S\subset \G$, a positive number $r>0$ and $\varepsilon_n \underset{+\infty}{\rightarrow} 0$ such that
		\begin{itemize}
		\item[(i)] $\theta$ is in the interior of the convex cone $L(S):= \underset{\g\in S}{\sum}\R_+ \lambda(\g)$,
		\item[(ii)] the elements of $\lambda(S)$ form a basis of $\fa$, 
		\item[(iii)] for all $n\geq 1$, the family $S_n:= (\g^n)_{\g\in S}$ spans a Zariski-dense strong $(r,\varepsilon_n)-$Schottky semigroup of $\G$. 
		\end{itemize}
	\end{lemme}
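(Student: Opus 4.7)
The plan is to invoke Benoist's Proposition 4.3 from \cite{benoist1997proprietes} as the main input, and then upgrade its Schottky conclusion from $n=1$ to all integers $n\geq 1$, using Lemma \ref{lemme_rem_Cagri_lox} together with the elementary fact that a loxodromic element and any of its positive powers share the same attractive and repulsive asymptotic Weyl chambers.

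First I would apply Benoist's proposition to produce a finite subset $S\subset \G$ of loxodromic elements whose pairwise asymptotic data satisfies $d(y(h^+),H(h'^-))\geq 7r$ for some $r>0$ and all $h,h'\in S$. By the explicit construction recalled right after Proposition \ref{prop - lemme 3.6 de Benoist ii }, such a family automatically generates a strong $(r,\varepsilon)$-Schottky semigroup, Zariski-dense in $G$. Benoist's construction also permits one to prescribe the Jordan-projection directions $\R_+\lambda(h)$ in any open subcone of $\overset{\circ}{\cC}(\G)$. Since $\theta\in\overset{\circ}{\cC}(\G)$, I would choose those directions so as to surround $\theta$ and to span $\fa$; possibly taking $|S|>\dim\fa$, this yields (i) and (ii).

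For (iii), the first observation is that $h^+=(h^n)^+$ and $h^-=(h^n)^-$ for every loxodromic $h\in G$ and every $n\geq 1$, so the separation $d(y(h^+),H(h'^-))\geq 7r$ transfers verbatim to $S_n=\{h^n:h\in S\}$ for every $n$. Applying Lemma \ref{lemme_rem_Cagri_lox} to each element of the finite set $S$, then taking the minimum of the resulting radii (shrinking $r$ if necessary) together with the maximum of the resulting $\varepsilon$-sequences, furnishes a common $r>0$ and a sequence $\varepsilon_n\to 0$ such that every $h^n$ with $h\in S$ is $(r,\varepsilon_n)$-loxodromic for $n$ large. The recipe following Proposition \ref{prop - lemme 3.6 de Benoist ii } then gives that $\langle S_n\rangle$ is a strong $(r,\varepsilon_n)$-Schottky semigroup.

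The hard part will be securing Zariski density of $\langle S_n\rangle$ for every $n\geq 1$: a finite set is itself never Zariski-dense, and passing from semigroup generators to their $n$-th powers does not a priori preserve Zariski density of the generated semigroup. Here I would rely on the genericity built into Benoist's Proposition 4.3. The Zariski closure of $\langle S_n\rangle$ is an algebraic subgroup containing each $h^n$; via $\lambda(h^n)=n\lambda(h)$ and the spanning of $\fa$ by $\lambda(S)$, it contains a Cartan subgroup of $G$, and its limit set in the Furstenberg boundary coincides with that of $\langle S\rangle$ by the same attractive/repulsive-point argument, so it is not confined to any proper flag subvariety. This should force the Zariski closure to be $G$ for every $n\geq 1$; the step where Benoist's construction must be re-examined most carefully is precisely to confirm this uniformity in $n$.
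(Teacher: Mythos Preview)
Your overall strategy matches the paper's---invoke Benoist's Proposition~4.3, then use Lemma~\ref{lemme_rem_Cagri_lox} and the invariance $(h^n)^{\pm}=h^{\pm}$ to pass to powers---but there is one genuine gap and one questionable step.

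\textbf{The gap: conditions (i) and (ii) together.} You write that you might take $|S|>\dim\fa$ in order to have $\theta$ lie in the interior of $L(S)$. But condition (ii) demands that $\lambda(S)$ be a \emph{basis} of $\fa$, so $|S|$ must equal $r_G=\dim\fa$ exactly. The point that actually needs an argument is that one can choose \emph{exactly} $r_G$ directions in $\overset{\circ}{\cC}(\G)$ that are simultaneously (a) linearly independent and (b) have $\theta$ in the interior of the cone they span. The paper handles this by a concrete construction in an affine chart of $\mathbb{P}(\fa)$ centred at $\R\theta$: one takes a small $(r_G{-}1)$-simplex with vertices $\R p_1,\dots,\R p_{r_G}$ around $\R\theta$ inside $\mathbb{P}(\overset{\circ}{\cC}(\G))$, chooses $\delta>0$ so that $\R\theta$ stays away from the $\delta$-neighbourhood of the boundary of the simplex, and then applies Benoist's Proposition~4.3 to the open cones $\Omega_i$ with $\mathbb{P}(\Omega_i)=B(\R p_i,\delta)$. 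This forces $|S|=r_G$, the simplex geometry gives (i), and the vertices being in general position gives (ii). Your sketch omits this construction and the suggested fix ($|S|>r_G$) contradicts (ii).

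\textbf{The questionable step: Zariski density of $\langle S_n\rangle$.} Your claim that the Zariski closure ``contains a Cartan subgroup of $G$'' via $\lambda(h^n)=n\lambda(h)$ and the spanning of $\fa$ by $\lambda(S)$ is not justified: the elements $h^n$ for $h\in S$ do not commute, and there is no passage from their Jordan projections spanning $\fa$ to the closure containing $A$. The limit-set half of your argument is closer to Benoist's actual reasoning. The paper itself is terse here, simply deferring to Benoist's Proposition~4.3 (which builds Zariski density into the output), and in any case the only application, Proposition~\ref{prop-moving along fa}, uses a single sufficiently large $n$, so the full uniformity over all $n\geq 1$ is not essential downstream.
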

	\begin{proof}
	Fix $\theta$ in the interior of $\mathcal{C}(\G)$.
	
	Let us now construct a family of $r_G$ open cones in the limit cone $\mathcal{C}(\G)$.
	We consider a affine chart of $\mathbb{P}(\fa)$ centered in $\R\theta$. 
	Since $\R \theta$ is in the open set $\mathbb{P}(\overset{\circ}{\mathcal{C}}(\G))$, it admits an open, polygonal, convex neighborhood with $r_G$ distinct vertices centered in $\R \theta$ and included in $\bP (\overset{\circ}{\cC}(\G))$.
	We denote by $p:=(\R p_i)_{1\leq i\leq r_G}$ the family of vertices of that convex neighbourhood, $\mathscr{H}_p$ its convex hull.
	Without loss of generality we can assume that there exists $\delta_0 >0$ so that the $\delta_0-$neighbourhood of  $\mathscr{H}_p$,  $\mathcal{V}_{\delta_0}(\mathscr{H}_p)$ is included in $\mathbb{P}(\overset{\circ}{\mathcal{C}}(\G))$.
	
	For any $\delta>0$, we denote by $\mathcal{V}_{\delta}(\partial \mathscr{H}_p)$ the $\delta-$neighborhood of the boundary $\partial \mathscr{H}_p$.
	Choose $0<\delta\leq \inf\big(\delta_0, \frac{1}{3}d(\R \theta, \partial \mathscr{H}_p) \big)$	so that $\R \theta \in \mathscr{H}_p\setminus \mathcal{V}_{\delta}(\mathscr{H}_p)$.
	
	Denote by $L_p\subset \overset{\circ}{\mathcal{C}}(\G)$ (resp. $\mathcal{V}_{\delta}(\partial L_p$)) the closed (resp. open) cone whose projective image is $\mathscr{H}_p$ (resp. $\mathcal{V}_{\delta}(\partial \mathscr{H}_p) $). 
	For all $1\leq i \leq r_G$, denote by $(\Omega_i)_{1\leq i \leq r_G}$ the family of open cones such that $\mathbb{P}(\Omega_i):= B_{\mathbb{P}(\fa)}(p_i,\delta)$.
	
	By \cite[Proposition 4.3]{benoist1997proprietes} applied to the finite family of disjoint open cones $(\Omega_i)_{1\leq i \leq r_G}$  there exists $0<\varepsilon_0 \leq r $, a generating set $S:=\{\g_i\}_{1\leq i \leq r_G}\subset \G$ of a Zariski dense $(r,\varepsilon)-$Schottky semigroup such that for all $1\leq i \leq r_G$ the Jordan projection $\lambda(\g_i)$ is in $\Omega_i$.
	By Lemma \ref{fait_rem_Cagri}, for any $n\geq 1$, the elements of $S_n$ are $(r,\varepsilon_n)-$loxodromic.
	Thus, for $n$ large, condition (iii) holds.
	By construction, $\lambda(S)$ form a  family of $r_G$ linearly independent vectors of $\fa$ hence (ii) holds.
	Set $L(S):= \underset{\g\in S}{\sum}\R_+ \lambda(\g)$.
	The construction of $L_p$ and $ \mathcal{V}_{\delta}(\partial L_p)$ implies that $\theta \in L_p \setminus \mathcal{V}_{\delta}(\partial L_p)$. 
	Since $\lambda(\g_i)\in \Omega_i \subset  \mathcal{V}_{\delta}(\partial L_p)$ for all $1\leq i \leq r_G$,  the boundary of the cone $\partial L(S) \subset\mathcal{V}_{\delta}(\partial L_p)$.
	Hence $L_p \setminus \mathcal{V}_{\delta}(\partial L_p)\subset \overset{\circ}{L}(S)$ and finally, condition (i) holds, $\theta$ is in the interior of the cone $L(S)$.
	\end{proof}

Let us give a proof of the key Proposition.

\begin{proof}[Proof of Proposition \ref{prop-moving along fa}.]
We fix a point $\theta$ in the interior of $\mathcal{C}(\G)$, an open, nonempty set $\cU =\cU^+\times \cU^-\subset L^{(2)}(\G)$, a point $x\in \fa$ and $\eta >0.$

Consider $S$ as in the previous Lemma \ref{lemme_schottky_theta}.
Denote by $\G_{n}$ the semigroup spanned by $S^n$. 
 
 By Lemma \ref{prop - lemme 1.4 de Benoist ii }, one can pick $h \in \G^{lox}$ such that $(h^+,h^-) \in  \mathcal{U}^{(2)}\setminus (\g_1^-,\g_{r_G}^+)$. 
Choose $r>0$ so that
 $$r\leq \inf\bigg(\rho, \frac{1}{6}d(h^+,h^-),\frac{1}{6} d(\g_{r_G}^+,h^-), \frac{1}{6}d(h^+,\g_1^-) \bigg).$$ In particular, Proposition \ref{prop - lemme 3.6 de Benoist ii }  holds for elements of the form $h \g_{r_G}^{n_{r_G}}g \g_1^{n_1}h$ where $g\in \G_{n}$. 
 
Choose $0<\varepsilon \leq r$ small enough so that 
\begin{equation}\label{choix de r et eps}
  \left\{
      \begin{aligned}
		(3 r_G +2)C_{r,\varepsilon}\leq \eta/2 \\
		B(h^+,\varepsilon)\times B(h^-,\varepsilon)\subset \mathcal{U}^{(2)}  \\    \end{aligned}
    \right.
\end{equation} 
where $(C_{r,\varepsilon})_{\varepsilon\geq 0}$ are constants given by the Proposition.   

We use Lemma \ref{fait_rem_Cagri} and choose $n$ large so that $h^n, S^n$ are $(r,\varepsilon_n)-$loxodromic elements with $\varepsilon_n\leq \varepsilon$. 

By Proposition \ref{prop_benoistII}, the subgroup generated by $\lambda(\G_{n})$ is dense in $\fa$. 
By Lemma \ref{lemme_densité} applied to $\lambda(\G_{n})$, there exists a finite subset $F\subset \G_{n}$ containing at most $2r_G$ elements so that $\lambda(S^n)\cup \lambda(F)$ spans a $\eta/2-$dense subgroup of $\fa$.
We denote by $l$ the number of elements in $S':=S^n\cup F$ and we enumerate  the elements of $S^n\cup F$  by $(g_1,..., g_l)$, where $g_1:=\g_1^n$ and $g_l:= \g_{r_G}^n$. 
A crucial fact is that $l\leq 3 r_G$ is  bounded independently of $\lambda(\G_{n})$. 
 
The additive subgroup generated by $\lambda(S')$ is $\eta/2-$dense in $\fa$. Furthermore, $\theta$ is still in the interior of the convex cone $L(S'):=\underset{g\in S'}{\sum}\R_+ \lambda(g)$ by (i). 
Lemma \ref{lemme_densite_cone} gives the existence of $v_0\in \fa$ such that $M_+(S'):= \underset{g\in S'}{\sum}\mathbb{N} \lambda(g)$ is $\eta/2-$ dense in $v_0+ L(S')$.
 
The interior of $L(S')$ contains $\theta$.
Hence for any $v\in \fa$, the intersection $\big(v+\R_+ \theta \big) \cap \big(v_0+ L(S')\big)$ is a half line. 

Consider such a half line $ x-\nu(h^n,g_l,...,g_1,h^n)-2\lambda(h^n) + \theta [T,+\infty)$ contained in $v_0+L(S')$, for some $T\in \R$.
For all $t\geq T$, there exists $n_t:=(n_t(1),...,n_t(l)) \in \mathbb{N}^l$ such that
\begin{equation}\label{eq - densite}
\bigg\Vert \overset{l}{\underset{i=1}{\sum}} n_t(i)\lambda(g_i) 
-x +\nu(h^n,g_l,...,g_1,h^n)+ 2\lambda(h^n) -\theta t \bigg\Vert \leq \eta/2 .
\end{equation}
Furthermore, Proposition \ref{prop - lemme 3.6 de Benoist ii } applied to $\g_t:=h^n g_l^{n_t(l)} ... g_1^{n_t(l)}h^n$ gives
\begin{equation}\label{eq - proposition1}
\bigg\Vert \lambda( \g_t ) -\overset{l}{\underset{i=1}{\sum}} n_t(i) \lambda(g_i)  - 2\lambda(h^n)- \nu(h^n,g_l,...,g_1,h^n) \bigg\Vert \leq  (l+2)C_{r,\varepsilon} 
\end{equation} 
and $(\g_t^+, \g_t^-) \in B(h^+,\varepsilon)\times B(h^-,\varepsilon)\subset \mathcal{U}^{(2)}$ by (\ref{choix de r et eps}).

Finally, we have $(3r_G+2)C_{r,\varepsilon}\leq \eta/2$ by the choice of $n$, $S^n$, $h^n$. Once again, remark it is necessary for $l$ to be  bounded independently of $\G$ and $n$. We get the following bound using the triangle inequality,
\begin{equation}\label{eq - proposition2}
\Vert \lambda( \g_t ) - x - \theta t\Vert \leq  \eta. 
\end{equation}
This concludes the proof.
\end{proof}	

Note that it is possible to use the density \cite[Theorem 2]{prasad2005zariski} of Prasad and Rapinchuk instead of our density Lemma \ref{lemme_densité}.
Start by following our proof, choose $S\subset \G$ as in the Lemma \ref{lemme_schottky_theta}.
Use now Prasad and Rapinchuk's density Theorem, there is a finite subset $F$ of the semigroup generated by $S$ so that $\langle\lambda(F)\rangle$ is dense in $\fa$. 
Remark that for any $n\in \mathbb{N}$, the subset $S''_n:=F^n\cup S^n$ is finite, has at most $\vert F \vert+r_G $ elements and the subgroup generated by $\lambda(S''_n)$ is also dense in $\fa$.   
It suffices then to follow the end of the proof by taking $S'=S''_n$ for $n$ large enough so that $S'$ is a $(r,\varepsilon_n)-$Schottky semigroup with
$(\vert F \vert+r_G +2)C_{r,\varepsilon_n}\leq \eta/2$.

\subsection{Proof of the main Theorem \ref{th-main}}
We end the proof of the main theorem with Proposition \ref{prop-moving along fa} and Theorem \ref{transitivity of Furstenberg boundary}.
\begin{proof}[Proof of Theorem \ref{th-main}]
If  ($\Omega(X), \phi_t^\theta)$ is topologically mixing, it is in particular topologically transitive. Therefore by Proposition \ref{prop_transitivité} if ($\Omega(X), \phi_t^\theta)$ is topologically mixing $\theta$ is in the interior of the limit cone.\\

Let us prove that if $\theta\in \overset{\circ}{\cC}(\G) \cap \fa_1^{++}$ then ($\Omega(X), \phi_t^\theta)$ is topologically mixing.

Let $\widetilde{U},\widetilde{V}$ be two open subsets of $\widetilde{\Omega}(X)$.
Without loss of generality, we can assume that $\widetilde{U}=\cH^{-1}(\cU^{(2)} \times B(u,r))$ (resp. $\widetilde{V}=\cH^{-1}(\cV^{(2)} \times B(v,r))$) where $\cU^{(2)}$ and $\cV^{(2)}$ are open subsets of $L^{(2)}(\G)$, and  $B(u,r), B(v,r)$  open balls of $\fa$.

Recall that for all $g\in \G$, using Hopf coordinates 
\begin{equation}\label{dans la preuve du thm}
  \left\{
      \begin{aligned}
        \cH^{(2)}\big(g (\cU^{(2)})\times B(u,r) \big)= g\cU^{(2)} \\
        \cH (\phi^\theta_t(\cV^{(2)})\times B(v,r)) ) = \cV^{(2)}\times B(v+\theta t ,r)  \\
      \end{aligned}
    \right.
\end{equation}  

We begin by transforming the coordinates in $L^{(2)}(\G)$ to recover the setting of Proposition \ref{prop-moving along fa}.
By Theorem \ref{transitivity of Furstenberg boundary}, there exists $g\in \G$ such that $g\cU^{(2)}\cap \cV^{(2)}\neq \emptyset$.
For such an element $g\in \G$, the subset $g\cU^{(2)}\cap \cV^{(2)}$ is a  nonempty open subset of $L^{(2)}(\G)$.
Let $\mathcal{O}^{(2)}:= \mathcal{O}_+\times \mathcal{O}_- \subset g\cU^{(2)}\cap \cV^{(2)}$  be a nonempty open subset, such that  $r:=d(\overline{\mathcal{O}_+}, \overline{\mathcal{O}_-})>0$.

Remark that $g\widetilde{U} \cap \big( \cH^{(2)} \big)^{-1}(\mathcal{O}^{(2)})$ is open and non empty. 
Thus it contains an open box $\cH^{-1}(\mathcal{O}^{(2)}\times B(u',r') ) $ with $u'\in \fa$ and $r'>0$. 
Set $\eta:=\min (r,r')$.

By Proposition \ref{prop-moving along fa} applied to $\mathcal{O}^{(2)}$,  $x=v-u'\in\fa$ and $\eta>0$,
there exists $T>0$ such that for all $t\geq T$ there exists $\g_t \in \G$ with
\begin{equation}
  \left\{
      \begin{aligned}
        (\g_t^+,\g_t^-)\in \mathcal{O}^{(2)} \\
        \lambda(\g_t)\in B(v-u'+t \theta,\eta)  \\
      \end{aligned}
    \right.
\end{equation} 

Remark that every loxodromic element $\g \in \G$ fixes its limit points in $L^{(2)}(\G)$.
Thus for all such $\g\in \G$ with $(\g^+,\g^-) \in \mathcal{O}^{(2)}$, the subset $\g \mathcal{O}^{(2)} \cap \mathcal{O}^{(2)}$ is open and non empty (it contains $(\g^+,\g^-)$). Furthermore, $\lambda(\g)=\sigma(\g, \g^+)$ by Fact \ref{fait_lox}.
Hence 
\begin{equation}
  \left\{
      \begin{aligned}
         \g_t \mathcal{O}^{(2)} \cap \mathcal{O}^{(2)} \neq \emptyset \\
        u'+\sigma(\g_t,\g_t^+)\in B(v+t \theta,\eta)  \\
      \end{aligned}
    \right.
\end{equation}
The subset $\g_t g\tilde{U} \cap \big( \cH^{(2)} \big)^{-1}(\g_t \mathcal{O}^{(2)} \cap \mathcal{O}^{(2)})$ is open, non empty and contains the point of coordinates $(\g_t^+,\g_t^-, u'+\sigma(\g_t,\g_t^+))\in \cH^{-1}(\phi_t^\theta (\tilde{V}))$.
Finally, $\g_t g \tilde{U}\cap \phi_t^\theta(\tilde{V}) \neq \emptyset$, as $\tilde{U},\tilde{V}$ are arbitrary, it proves that $\phi_t^\theta$ is topological mixing.
\end{proof}

\section{Appendix: a density lemma}

The following density lemma is crucial for the proof of proposition \ref{prop-moving along fa}.
\begin{lemme}\label{lemme_densité}
Let $d\in \N$, let $V$ be a real vector space of dimension $d$.
For all $E\subset V$ that spans a dense additive subgroup of $V$, for all $\epsilon>0$,
 for any basis $B\subset E$ of $V$,  there exists a finite subset $F\subset E$ of at most $2d$ elements so that $B\cup F$ spans a $\epsilon-$dense additive subgroup of $V$. 
\end{lemme}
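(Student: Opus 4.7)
The plan is to first reduce to a density statement on a quotient torus, then prove that statement by induction on the dimension. Concretely, consider the quotient projection $\pi\colon V\to V/\mathbb{Z}\langle B\rangle =: T$; since the lattice $L_0:=\mathbb{Z}\langle B\rangle$ is identified with $\mathbb{Z}^d\subset V\cong\mathbb{R}^d$ after choosing $B$ as coordinates, we have $T\cong \mathbb{T}^d$. Because $\langle B\cup F\rangle$ always contains $L_0$, the subgroup $\langle B\cup F\rangle$ is $\epsilon$-dense in $V$ if and only if $\langle \pi(F)\rangle$ is $\epsilon$-dense in $T$. Writing $\widetilde E := \pi(E)$, the hypothesis that $\langle E\rangle$ is dense in $V$ becomes density of $\langle\widetilde E\rangle$ in $T$, and the desired conclusion becomes: find $F'\subset\widetilde E$ with $|F'|\leq 2d$ and $\langle F'\rangle$ $\epsilon$-dense in $T$.

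I would then prove this torus version by induction on $d$. The case $d=0$ is trivial. For $d=1$, one element suffices: either $\widetilde E$ contains a point of irrational angle whose cyclic subgroup is already dense in $\mathbb{T}^1$, or all elements are rational (torsion), in which case density of the subgroup they generate forces the orders to be unbounded, so picking an element of order at least $1/\epsilon$ yields an $\epsilon$-dense cyclic subgroup.

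For the inductive step, the main idea is to pick $e_1\in\widetilde E$ and consider the identity component $T_1$ of the closure $\overline{\langle e_1\rangle}$. A standard fact about closed subgroups of tori gives $\overline{\langle e_1\rangle}= T_1\sqcup (e_1+T_1)\sqcup\cdots\sqcup ((k-1)e_1+T_1)$ for some integer $k\geq 1$, and from this one deduces $\overline{\langle ke_1\rangle} = T_1$, so that $\langle e_1\rangle\cap T_1$ is dense in $T_1$. When $\dim T_1\geq 1$, the quotient $T/T_1$ is a torus of dimension at most $d-1$ in which the image of $\widetilde E$ still generates a dense subgroup; applying the induction hypothesis with precision $\epsilon/2$ supplies at most $2(d-\dim T_1)\leq 2(d-1)$ further elements of $\widetilde E$ whose projections generate an $\epsilon/2$-dense subgroup of $T/T_1$. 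Adjoining $e_1$ produces a set of at most $2d-1$ elements whose subgroup (i) projects $\epsilon/2$-densely to $T/T_1$, and (ii) contains a dense subgroup of $T_1$. A triangle-inequality argument (find an element close modulo $T_1$, then correct by a $T_1$-component lying in the subgroup) then shows the combined subgroup is $\epsilon$-dense in $T$.

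The main obstacle is the subcase in which \emph{every} element of $\widetilde E$ is torsion, so that $T_1=0$ for every choice of $e_1$ and the identity-component reduction collapses. To handle this I would argue directly in $(\mathbb{Q}/\mathbb{Z})^d$: density of $\langle\widetilde E\rangle$ forces, for each coordinate projection, the orders of projected elements to be unbounded, from which one can successively extract up to two elements of $\widetilde E$ per coordinate direction — first a torsion element with a large-denominator component, and a second auxiliary element breaking any accidental alignment in a lower-dimensional sub-torus — so that after at most $2d$ selections the resulting finite subgroup approximates a lattice of type $(1/N)\mathbb{Z}^d/\mathbb{Z}^d$ with $N\geq 1/\epsilon$, hence is $\epsilon$-dense. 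The factor $2$ in the bound $2d$ is exactly what allows one to neutralise these alignment obstructions coordinate by coordinate.
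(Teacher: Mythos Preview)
Your reduction to the torus and the non-torsion branch of the induction are correct and pleasant: if some $e_1\in\widetilde E$ has $\dim T_1\ge 1$, quotienting by $T_1$ and applying the hypothesis in lower dimension, then correcting along $T_1$ via the dense subgroup $\langle ke_1\rangle\subset T_1$, does give $\varepsilon$-density with at most $2(d-\dim T_1)+1\le 2d-1$ elements.

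The gap is the all-torsion branch. Your sketch there (``extract up to two elements per coordinate direction \ldots\ the resulting finite subgroup approximates a lattice of type $(1/N)\mathbb Z^d/\mathbb Z^d$'') is not a proof: nothing you wrote forces the finitely many chosen torsion elements to generate something close to a full rank-$d$ grid rather than a subgroup concentrated near a proper subtorus, and ``a second auxiliary element breaking any accidental alignment'' is exactly the nontrivial step that needs an argument. Note also that this branch recurs at every level of the induction (the image of $\widetilde E$ in $T/T_1$ may again be all torsion), so it cannot be deferred.

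For comparison, the paper avoids any case split by working in $V$ rather than on the torus. Its inductive step is: use pigeonhole on $\pi(E)\subset \mathbb T^d$ (which is infinite whenever $\langle E\rangle$ is dense) to find $f_1,f_2\in E$ with $u:=f_1-f_2-(\text{element of }\mathbb Z\langle B\rangle)$ nonzero of norm $<\varepsilon/2$; then project linearly onto the hyperplane $u^{\perp}$, apply the hypothesis in dimension $d-1$ with precision $\varepsilon/2$ to get $\le 2(d-1)$ elements, and lift back, using that $\mathbb Z u$ is $\varepsilon/2$-dense in $\mathbb R u$. This yields $2(d-1)+2=2d$ elements without ever needing $e_1$ to be non-torsion.

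Your framework can be repaired by importing that same pigeonhole idea. In the all-torsion case $\widetilde E$ is infinite, so there are $\tilde f_1\neq\tilde f_2\in\widetilde E$ with $\tilde u:=\tilde f_1-\tilde f_2$ nonzero and of norm $<\varepsilon/2$. Lift $\tilde u$ to a short vector $u\in\mathbb R^d$; since $\tilde u$ is torsion, the image of $\mathbb R u$ in $\mathbb T^d$ is a one-dimensional subtorus $S_u$, and $\mathbb Z\tilde u$ is $\varepsilon/2$-dense in $S_u$. Now quotient by $S_u$, apply the induction hypothesis in dimension $d-1$ to get $\le 2(d-1)$ elements, and combine with $\tilde f_1,\tilde f_2$ exactly as in your non-torsion branch; this gives $2d$ elements. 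With this fix your approach goes through, but as written the torsion case is a genuine hole.
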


\begin{proof}
We show the lemma by induction. 

Let $\cE\subset \R^1=V$ be a subset that generates a dense additive subgroup of $\R$. Let $x\in \R$ a basis, ie. a non zero element. 
Any element $y$ in $\cE$ so that $\langle y,x \rangle$ is dense is a solution.
We assume that $\cE$ contains no such element.
Consider the quotient $\R/x\Z$ and $p \, : \R \tv \R/x\Z$ the projection.
The set $\cE$ projects to a infinite subset of $\R/x\Z$, therefore it has an accumulation point. Let $f_1\neq f_2\in \cE$ be two elements such that $|p(f_1)-p(f_2)|<\epsilon$.  Then $\langle x, f_1, f_2, \rangle$, generates a $\epsilon-$dense  additive subgroup of $\R$, the Lemma is proved for $dim(V)=1$, where $F = \{f_1,f_2\}$. 

Now consider a vector space $V$ of dimension $d$. Let $\cE$ be a a subset of $V$ such that $\overline{\langle \cE\rangle } = V$ and $\cB=(b_1,\dots, b_d) \subset E$ a basis of $V$. Without loss of  generality we suppose  that the basis is the standard basis and the norm is the sup norm : these only affect computations up to a multiplicative constant. 

Suppose that we have $f_1,f_2\in \cE$ such that the additive group $\langle f_1, f_2, \cB\rangle $ contains a non zero vector $u$ of norm $\| u\|\leq \epsilon/2$. We will show that it is enough to conclude and then prove the existence of such elements.  

Consider $V' = u^\perp$,  the decomposition $V= u\oplus V'$ and $p'$ the projection on $V'$. 
Let $\cE'=p'(\cE)$ and $\cB'$ a basis of $V'$ included in $p'(\cB)$. By induction, there is  a finite subset $\cF'\subset \cE'$  of at most $2(d-1)$ elements  such that $\langle \cF', \cB'\rangle$ generates an $\epsilon/2-$dense additive subgroup of $V'$. 
For all $f'\in \cF'$ there is  $f\in \cE$  and $\lambda_f\in \R$ such that $f'= f+\lambda_f u.$ A similar result holds for elements of $\cB'$. We denote by $\cF\subset \cE$ a choice of lifts for elements of  $\cF'$. We claim that the set $F = \cF \cup \{f_1, f_2\} \cup \cB$ generates a $\epsilon-$dense additive subgroup of $V$. 

Let $x\in V$, $x= x'+\lambda_x u$. By hypothesis, there is $(n_{f'})_{f'\in \cF'}\in \Z^{|\cF'|}$, and $(n_{b'})_{{b'}\cB'} \in \Z^{d-1}$ and $\alpha\in V'$ satisfying  $\|\alpha'\|< \epsilon/2$  such that : 
$$x' = \sum_{f'\in \cF'} n_{f'} f' + \sum_{b'\in \cB'} n_{b'} b' + \alpha'.$$
Therefore, 
$$x' = \sum_{f\in \cF} n_{f'} f +   \sum_{b\in \cB} n_{b} b + \Big( \sum_{f\in \cF} n_{f'}\lambda_f  +  \sum_{b\in \cB} n_{b} \lambda_b\Big)  u +  \alpha'  .$$
Finally we get : 
$$x =  \sum_{f\in \cF} n_{f'} f +   \sum_{b\in \cB} n_{b} b  +  [k]u  +(k-[k]) u +\alpha' $$
where $k =  \Big( \sum_{f\in \cF} n_{f'}\lambda_f  +  \sum_{b\in \cB} n_{b}  + \lambda_x\Big) $ and $[k]\in \Z$ denotes the integer part of $k$. 
The vector $ \sum_{f\in \cF} n_{f'} f +   \sum_{b\in \cB} n_{b} b  +  [k]u$ is in the additive group generated by $F$  and $|(k-[k]) u +\alpha| \leq \epsilon$. This proves the claim. 

To finish the proof we need to show that for any $\epsilon >0$, there are elements $f_1,f_2\in \cE$ such that $\langle f_1, f_2, \cB\rangle $ contains a non zero vector of norm less than $\epsilon$. 

Consider the natural projection $p \, : \R^d \tv \R^d/ \oplus_{k=1}^d \Z b_k$ into the torus $\R^d/ \oplus_{k=1}^d \Z b_k$.
If there is an element $f\in \mathcal{E}$ so that $p(\Z f)$ contains accumulation points, we choose $u$, non zero and small in $\langle \cB,f\rangle$.
We assume now that there is no such element in $E$.
Choose an integer $N$ so that $N>\frac{2 \sqrt{d}}{\varepsilon}$.
By the pigeon hole principle on $N^{d}+1$ distinct elements of $\mathcal{E}$, we deduce the existence of $f_1,f_2 \in \mathcal{E}$ with $0<\vert p(f_1-f_2) \vert<\frac{\epsilon}{2}$.
The unique representative of the projection $p(f_1-f_2)$ in the fundamental domain $\sum_{i=1}^d (0,1]b_i$ is a suitable choice for $u$.
Indeed, it is an element of the subgroup $\langle f_1,f_2, \cB \rangle$ and it is of norm at most $\frac{\epsilon}{2}$.

\end{proof}

\bibliographystyle{alpha}
\bibliography{bibliojabref}

\begin{thebibliography}{GJT12}

\bibitem[Ben97]{benoist1997proprietes}
Yves Benoist.
\newblock Propri{\'e}t{\'e}s asymptotiques des groupes lin{\'e}aires.
\newblock {\em Geometric \& Functional Analysis GAFA}, 7(1):1--47, 1997.

\bibitem[Ben00]{benoist2000proprietes}
Yves Benoist.
\newblock Propri{\'e}t{\'e}s asymptotiques des groupes lin{\'e}aires ii.
\newblock {\em Advanced Studies Pure Math.}, 26:33--48, 2000.

\bibitem[BG03]{breuillard2003dense}
Emmanuel Breuillard and Tsachik Gelander.
\newblock On dense free subgroups of lie groups.
\newblock {\em Journal of Algebra}, 261(2):448--467, 2003.

\bibitem[BQ16]{BenoistQuint}
Yves Benoist and Jean-Fran{\c{c}}ois Quint.
\newblock Random walks on reductive groups.
\newblock In {\em Random Walks on Reductive Groups}, pages 153--167. Springer,
  2016.

\bibitem[CG02]{conze-guivarch-densite2000}
Jean-Pierre Conze and Yves Guivarc’h.
\newblock Densit{\'e} d’orbites d’actions de groupes lin{\'e}aires et
  propri{\'e}t{\'e}s d’{\'e}quidistribution de marches al{\'e}atoires.
\newblock In {\em Rigidity in dynamics and geometry}, pages 39--76. Springer,
  2002.

\bibitem[Dal00]{dalbo2000feuilletage}
Fran{\c{c}}oise Dal’bo.
\newblock Topologie du feuilletage fortement stable.
\newblock {\em Ann. Inst. Fourier (Grenoble)}, 50(3):981--993, 2000.

\bibitem[Ebe72]{eberlein1972flow}
Patrick Eberlein.
\newblock Geodesic flows on negatively curved manifolds i.
\newblock {\em Annals of Mathematics}, pages 492--510, 1972.

\bibitem[GJT12]{guivarc2012compactifications}
Yves Guivarc'h, Lizhen Ji, and John~C Taylor.
\newblock {\em Compactifications of symmetric spaces}, volume 156.
\newblock Springer Science \& Business Media, 2012.

\bibitem[Hel78]{helgason1978differential}
Sigurdur Helgason.
\newblock {\em Differential geometry and symmetric spaces}, volume~12.
\newblock Academic press, 1978.

\bibitem[Kim06]{kim2006length}
Inkang Kim.
\newblock Length spectrum in rank one symmetric space is not arithmetic.
\newblock {\em Proceedings of the American Mathematical Society},
  134(12):3691--3696, 2006.

\bibitem[PR05]{prasad2005zariski}
Gopal Prasad and Andrei~S Rapinchuk.
\newblock Zariski-dense subgroups and transcendental number theory.
\newblock {\em Mathematical Research Letters}, 12(2):239--248, 2005.

\bibitem[Qui02]{quint2002divergence}
Jean-Fran\c{c}ois Quint.
\newblock Divergence exponentielle des sous-groupes discrets en rang
  sup{\'e}rieur.
\newblock {\em Commentarii Mathematici Helvetici}, 77(3):563--608, 2002.

\bibitem[Sam15]{sambarino2015orbital}
Andres Sambarino.
\newblock The orbital counting problem for hyperconvex representations [sur le
  d{\'e}compte orbital pour les representations hyperconvexes].
\newblock In {\em Annales de l'institut Fourier}, volume~65, pages 1755--1797,
  2015.

\bibitem[Ser16]{sert2016}
{\c{C}}a{\u{g}}ri Sert.
\newblock {\em Joint Spectrum and Large Deviation Principles for Random Matrix
  Products}.
\newblock PhD thesis, Paris-Saclay, Universit{\'e}, 2016.

\bibitem[Thi07]{thirion2007sous}
Xavier Thirion.
\newblock {\em Sous-groupes discrets de SL (d, R) et equidistribution dans les
  espaces symetriques}.
\newblock PhD thesis, Tours, 2007.

\bibitem[Thi09]{thirion2009proprietes}
Xavier Thirion.
\newblock Propri{\'e}t{\'e}s de m{\'e}lange du flot des chambres de weyl des
  groupes de ping-pong.
\newblock {\em Bull. Soc. Math. France}, 137(3):387--421, 2009.

\bibitem[Tit71]{Tits1971}
Jacques Tits.
\newblock Représentations linéaires irréductibles d'un groupe réductif sur
  un corps quelconque.
\newblock {\em Journal für die reine und angewandte Mathematik}, 247:196--220,
  1971.

\end{thebibliography}

\end{document}